\title{Convergence of a Fully Discrete and Energy-Dissipating Finite-Volume Scheme for Aggregation-Diffusion Equations}
\newcommand{\authorPDF}{Bailo, Carrillo, Murakawa, Schmidtchen.}
\newcommand{\subjectPDF}{45K05; 65M08; 65M12.}
\newcommand{\keywordsPDF}{Finite-volume methods; convergence of numerical methods; drift-diffusion equations; integro-differential equations.}
\author[1,2]{Rafael Bailo}
\author[2]{Jos\'{e} A. Carrillo}
\author[3]{\authorcr Hideki Murakawa}
\author[4]{Markus Schmidtchen}
\affil[1]{
Univ. Lille, CNRS, UMR 8524 - Laboratoire Paul Painlev\'{e}
}
\affil[ ]{
	F-59000 Lille, France
}
\affil[ ]{\textit{
		rafael.bailo@univ-lille.fr
	}}
\affil[ ]{}
\affil[2]{
	Mathematical Institute, University of Oxford
}
\affil[ ]{
	OX2 6GG Oxford, United Kingdom
}
\affil[ ]{\textit{bailo@maths.ox.ac.uk, carrillo@maths.ox.ac.uk}}
\affil[ ]{}
\affil[3]{
	Faculty of Advanced Science and Technology, Ryukoku University
}
\affil[ ]{
	1-5 Yokotani, Seta Oe-cho, Otsu, Shiga 520-2194, Japan
}
\affil[ ]{\textit{
		murakawa@math.ryukoku.ac.jp
	}}
\affil[ ]{}
\affil[4]{
Faculty of Mathematics, Technische Universit\"{a}t Dresden
}
\affil[ ]{
	Zellescher Weg 12-14, 01069 Dresden, Germany
}
\affil[ ]{\textit{
		markus.schmidtchen@tu-dresden.de
	}}
\definecolor{ppGreen}{HTML}{008000}
\definecolor{ppBlue}{HTML}{0000FF}
\definecolor{ppRed}{HTML}{FF0000}
\definecolor{ppPurple}{HTML}{800080}
\definecolor{lightblue}{rgb}{0.145,0.6666,1}
\definecolor{imperialnavy}{RGB}{0,33,71}
\definecolor{imperialblue}{RGB}{0,62,116}
\definecolor{imperialgrey}{RGB}{235,238,238}
\definecolor{imperialcoolgrey}{RGB}{157,157,157}
\definecolor{grey52}{RGB}{52,52,52}
\definecolor{color1}{RGB}{0,62,116}
\definecolor{color2}{RGB}{152,152,152}
\definecolor{color3}{RGB}{52,52,52}
\definecolor{color4}{RGB}{100,100,100} 
\newcounter{review}
\newcommand{\ntcreview}[3]{\refstepcounter{review}

	{\color{#2}{\textbf{[#1]}: #3}}}
\newcommand{\creview}[3]{\ntcreview{#1}{#2}{#3}
	\addcontentsline{tor}{subsection}{\thereview~\textbf{[#1]}:~#3
	}}
\newcommand{\review}[2]{\creview{#1}{blue}{#2}}
\newcommand\listreviewname{List of Reviews}
\newcommand\listofreviews{\section*{\listreviewname}\@starttoc{tor}}
\newcommand{\subjectclassification}[1]{

	{\small\textbf{\textit{AMS Subject Classification --- }} #1}

}
\newcommand{\keywords}[1]{

	{\small\textbf{\textit{Keywords --- }} #1}

}
\renewcommand\lll\MoveEqLeft
\tikzset{thicker line small arrows m/.style args={#1in#2}{
			draw=#2,
			solid,
			line width=#1,
			shorten >=1mm,
			decoration={
					markings,
					mark=at position 1.0 with {\arrow[fill=#2,thin]{triangle 90}}
				},
			postaction={decorate}
		}}
\pgfplotsset{compat=1.15}
\newcolumntype{L}[1]{>{\raggedright\let\newline\\\arraybackslash\hspace{0pt}}m{#1}}
\newcolumntype{C}[1]{>{\centering\let\newline\\\arraybackslash\hspace{0pt}}m{#1}}
\newcolumntype{R}[1]{>{\raggedleft\let\newline\\\arraybackslash\hspace{0pt}}m{#1}}
\newcommand\term\emph
\let\newtitle\@title
\let\newauthor\@author
\let\newdate\@date
\numberwithin{equation}{section}
\newcommand{\singleappendix}{
	\appendix

	\section*{Appendix}

	\addcontentsline{toc}{section}{Appendix}

	\setcounter{equation}{0}
	\renewcommand\theequation{A.\arabic{equation}}

	\setcounter{theorem}{0}
	\renewcommand\thetheorem{A.\arabic{theorem}}
}
\def\@maketitle{\newpage
	\begin{center}\let \footnote \thanks
		{\LARGE\bfseries \@title \par}\vskip 2.5em{\large
				\lineskip .5em\begin{tabular}[t]{c}\@author
				\end{tabular}\par}\vskip 1em{\large \@date}\end{center}\par
	\vskip 1.5em}
\theoremstyle{plain}
\newtheorem{theorem}{Theorem}[section]
\newtheorem{lemma}[theorem]{Lemma}
\newtheorem{proposition}[theorem]{Proposition}
\newtheorem{corollary}[theorem]{Corollary}
\newenvironment{customthm}[1]
{\innercustomthm}
{\endinnercustomthm}
\theoremstyle{remark}
\newtheorem{remark}[theorem]{\bf Remark}
\newtheorem{definition}[theorem]{\bf Definition}
\def\Xint#1{\mathchoice
	{\XXint\displaystyle\textstyle{#1}}{\XXint\textstyle\scriptstyle{#1}}{\XXint\scriptstyle\scriptscriptstyle{#1}}{\XXint\scriptscriptstyle\scriptscriptstyle{#1}}\!\int}
\def\XXint#1#2#3{{\setbox0=\hbox{$#1{#2#3}{\int}$ }
			\vcenter{\hbox{$#2#3$ }}\kern-.6\wd0}}
\def\dashint{\Xint-}
\renewcommand{\th}{\textsuperscript{th} }
\DeclarePairedDelimiter{\prt}{(}{)}
\DeclarePairedDelimiter{\brk}{[}{]}
\DeclarePairedDelimiter{\lbrk}{[}{)}
\DeclarePairedDelimiter{\abs}{|}{|}
\DeclarePairedDelimiter{\norm}{\|}{\|}
\DeclarePairedDelimiter{\set}{\{}{\}}
\DeclarePairedDelimiter{\inn}{\langle}{\rangle}
\newcommand{\inner}[2]{\inn{#1,#2}}
\newcommand\inner*[2]{\inn*{#1,#2}}
\DeclarePairedDelimiter{\floor}{\lfloor}{\rfloor}
\DeclarePairedDelimiter{\positive}{(}{)^{+}}
\DeclarePairedDelimiter{\negative}{(}{)^{-}}
\newcommand\pos\positive
\renewcommand\neg\negative
\newcommand\pos*{\positive*}
\newcommand\neg*{\negative*}
\newcommand{\R}{{\mathbb{R}}}
\renewcommand{\L}[1]{{L^{#1}}}
\newcommand{\Lone}{\L{1}}
\newcommand{\Ltwo}{\L{2}}
\newcommand{\Lp}{\L{p}}
\newcommand{\Linf}{\L{\infty}}
\renewcommand{\H}[1]{{H^{#1}}}
\newcommand{\Hone}{\H{1}}
\newcommand{\curlyX}{\mathcal{X}}
\newcommand{\curlyG}{\mathcal{G}}
\newcommand{\OmegaT}{\QT}
\newcommand{\QT}{{Q_T}}
\newcommand{\pnorm}[2]{\norm{#2}_{\L{#1}}}
\newcommand\pnorm*[2]{\norm*{#2}_{\L{#1}}}
\newcommand{\psnorm}[3]{\norm{#3}_{\L{#1}(#2)}}
\newcommand\psnorm*[3]{\norm*{#3}_{\L{#1}(#2)}}
\newcommand{\pnormp}[2]{\pnorm{#1}{#2}^{#1}}
\newcommand\pnormp*[2]{\pnorm*{#1}{#2}^{#1}}
\newcommand{\psnormp}[3]{\psnorm{#1}{#2}{#3}^{#1}}
\newcommand\psnormp*[3]{\psnorm*{#1}{#2}{#3}^{#1}}
\newcommand\svec\vec
\renewcommand{\vec}{\mathbf}
\renewcommand{\svec}{\boldsymbol}
\newcommand{\conv}{\ast}
\renewcommand{\d}{\mathrm{d}}
\newcommand{\dd}{\mathop{}\!\d}
\newcommand{\pder}[2]{\frac{\partial #1}{\partial #2}}
\newcommand{\secondpder}[2]{\frac{\partial^2 #1}{\partial #2^2}}
\newcommand{\doublepder}[3]{\frac{\partial^2 #1}{\partial #2 \partial #3}}
\newcommand{\dt}{\dd t}
\newcommand{\ds}{\dd s}
\newcommand{\dx}{\dd x}
\newcommand{\px}{\partial_x}
\newcommand{\Dt}{\Delta t}
\newcommand{\Dx}{\Delta x}
\newcommand{\ddx}{\d_{x}}
\newcommand{\h}{_{h}}
\newcommand{\nhalf}{1/2}
\renewcommand{\i}{_{i}}
\newcommand{\ip}{_{i+1}}
\newcommand{\ih}{_{i+\nhalf}}
\newcommand{\imh}{_{i-\nhalf}}
\renewcommand{\j}{_{j}}
\newcommand{\jm}{_{j-1}}
\newcommand{\jmh}{_{j-\nhalf}}
\renewcommand{\k}{_{k}}
\newcommand{\kp}{_{k+1}}
\newcommand{\kh}{_{k+\nhalf}}
\renewcommand{\l}{_{l}}
\newcommand{\n}{^{n}}
\newcommand{\np}{^{n+1}}
\newcommand{\nss}{^{**}}
\newcommand{\ppr}{(r)}
\newcommand{\Wr}{^{W,\,\ppr}}
\newcommand{\Wik}{W_{i-k}}
\newlength{\dhatheight}
	\newcommand{\Wr}{^{W,\,\ppr}}
	\renewcommand{\Wr}{^{W,\,\ppr}}
\DeclareMathOperator{\argmax}{argmax}
\DeclareMathOperator{\argmin}{argmin}
\newcommand{\curlyN}{\mathcal{N}}
\newcommand{\curlyI}{\mathcal{I}}
\newcommand{\curlyC}{\mathcal{C}}
\newcommand{\cH}{\curlyC_{H'}}
\newcommand{\cinf}{\curlyC_{\infty}}
\newcommand{\cK}{\curlyC_{K}}
\newcommand{\cdH}{\curlyC_{\partial_x H'}}
\newcommand{\cdR}{\curlyC_{\partial_x \rho}}
\newcommand{\cVone}{\curlyC_V^{\prt{1}}}
\newcommand{\cVtwo}{\curlyC_V^{\prt{2}}}
\newcommand{\Dl}{\Delta l}
\newcommand{\Dh}{\Delta h}
\newcommand{\K}{^{n+K}}
\newcommand{\Kp}{^{n+K+1}}
\newcommand{\nk}{^{n+k}}
\newcommand{\nkm}{^{n+k-1}}
\newcommand{\ik}{_{i+k}}
\newcommand{\ikm}{_{i+k-1}}
\newcommand{\ikmh}{_{i+k-1/2}}
\newcommand{\iK}{_{i+K}}
\newcommand{\iKp}{_{i+K+1}}
\newcommand{\Emean}[2]{\mu_{H}\prt{#1,#2}}
\renewcommand{\review}[2]{}
\renewcommand{\creview}[3]{}
\renewcommand{\ntcreview}[3]{}
\renewcommand{\tableofcontents}{}
\renewcommand{\listofreviews}{}
\def\csname ver@etex.sty\endcsname{3000/12/31}
\newcommand{\revision}[2]{#2}
\newcommand{\revisionNote}[2]{}
\definecolor{revisionColourOne}{RGB}{180,0,0}
\definecolor{revisionColourTwo}{RGB}{0,0,180}
\newcommand{\revisionOne}[1]{\revision{revisionColourOne}{#1}}
\newcommand{\revisionTwo}[1]{\revision{revisionColourTwo}{#1}}
\newcommand{\revisionNoteOne}[1]{\revisionNote{revisionColourOne}{#1}}
\newcommand{\revisionNoteTwo}[1]{\revisionNote{revisionColourTwo}{#1}}
\begin{document}
\begin{singlespace}\maketitle\end{singlespace}

\begin{abstract}
	We study an implicit finite-volume scheme for non-linear, non-local aggregation-diffusion equations which exhibit a gradient-flow structure, recently introduced in \cite{B.C.H2020}. Crucially, this scheme keeps the dissipation property of an associated fully discrete energy, and does so unconditionally with respect to the time step. Our main contribution in this work is to show the convergence of the method under suitable assumptions on the diffusion functions and potentials involved.
\end{abstract}

\subjectclassification{\subjectPDF}
\keywords{\keywordsPDF}
 
\revisionNoteOne{Changes corresponding to the comments of Reviewer 1 are shown in red.}
\revisionNoteTwo{Similarly, changes corresponding to Reviewer 2 are shown in blue.}

\tableofcontents
\listofreviews

\section{Introduction}

This work is concerned with the analysis of a finite-volume scheme for the family of \term{aggregation-diffusion equations}
\begin{subequations} \label{eq:continuous}
\begin{equation} \label{eq:continuous_eq}
	\pder{\rho}{t} = \pder{}{x}\brk*{\rho\pder{}{x}\prt{H'(\rho)+V(x)+W(x)\conv\rho}}
\end{equation}
on $\Omega = \prt{-L,L}$, for some $L>0$, and for $t>0$, together with a given non-negative initial datum
\begin{align+}
\rho(0,x) = \rho_0(x).
\end{align+}
The equation is equipped with no-flux boundary conditions:
\begin{align+} \label{eq:continuous_no_flux}
\rho\pder{}{x}\prt{H'(\rho)+V(x)+W(x)\conv\rho} = 0
\end{align+}
\end{subequations}
at $x=\pm L$.
The unknown, $\rho$, typically models the \term{density} of particles of a given species.
The term $H$ is called the \term{internal energy density}, and models linear or non-linear diffusion effects within the species $\rho$.
The potential $V$ models the \term{drift} of the species; it is sometimes referred to as the \term{external potential}, since it provides an energetic landscape that drives the density towards favourable regions. Finally, the potential $W$ models the interactions of particles with each other; we call it the \term{internal potential} or the \term{interaction potential}.

It is important to highlight the generality of \cref{eq:continuous}, since it encompasses many popular equations. In the absence of the potential terms, it becomes the well-known \term{filtration equation}. Linear diffusion can be obtained by choosing Boltzmann's entropy as the internal energy, i.e. $H(\rho) = \rho \log\prt{\rho} - \rho$; moreover, the linear Fokker-Planck equation is recovered by letting $V(x)=x^2/2$. The \term{porous medium equation} \cite{Vazquez2006} can be found instead by considering a power law for the internal energy density: $H(\rho)=\rho^m/\prt{m-1}$, for some $m>1$. Typically, the non-linearity accounts for a density-dependent diffusion, as observed, for example, in porous medium flow \cite{Barenblatt1952}, in heat transfer \cite{Z.K1950}, or in population dynamics \cite{G.N1975}. The interplay between diffusion and potential effects has a myriad of applications in life sciences; see \cite{C.C.Y2019} for a recent survey.

The choice of the potential $W$ models certain phenomena, such as attraction, repulsion, and combinations thereof. Pairwise interactions often depend solely on the distance between agents; as a result, many applications assume the interaction potential to be radially symmetric, i.e., $W(x) = w(|x|)$, for some $w: \R \rightarrow \R $. This radial potential is called \term{attractive} if $w'>0$ and \term{repulsive} if $w'<0$. Typical choices include power laws in the context of granular media \cite{B.C.P1997, Toscani2010}, or combinations of power laws ($W(x)=|x|^a/a - |x|^b/b$), which feature short-range repulsion and long-range attraction, in swarming and population dynamics \cite{L.L.E2008,F.H.K2011,K.S.U+2011,B.K.S+2015,C.H.S2018}; these effects may appear simple at first glance, but they result in complex and subtle dynamics \cite{C.D.M2016,B.C.L+2013,F.H2013,K.S.U+2011}. Other choices include local sensitivity regions (modelled by indicator functions \cite{M.E1999,T.B2004,T.B.L2006} or attractive-repulsive Morse potentials \cite{D.C.B+2006,L.L.E2008,Fetecau2011,C.M.P2013,C.H.M2014}), as well as growth and saturation terms \cite{A.P.S2006,M.T2015,V.S2015,C.M.S+2019}.

The equation, in the general form \eqref{eq:continuous}, has been studied in \cite{C.C.H2015,C.M.V2003,C.M.V2006,McCann1997} through its dissipative structure. To be precise, the energy functional
\begin{align} \label{eq:continuousEnergy}
	E(\rho) \coloneqq \int \prt*{H(\rho)+V\rho +\frac{1}{2}(W\conv \rho )\rho}\dx
\end{align}
is dissipated along solutions of \cref{eq:continuous}; formally, we may compute
\begin{align} \label{eq:energydissipation}
	\revisionTwo{
		\frac{\d}{\dt}E(\rho) = - \int \rho \abs*{\pder{}{x}\prt{H'(\rho) + V + W\conv \rho}}^2 \dx.
	}
\end{align}

Numerical schemes which preserve the energy-dissipation structure are desirable to simulate the behaviour of \cref{eq:continuous}. Suitable methods should also \revisionOne{ensure} the conservation of mass and preserve the non-negativity of the solutions. Such schemes were proposed in \cite{C.F2007}, for non-linear diffusion equations with drift, and in \cite{C.C.H2015}, for equations including non-local interactions. In the latter, the authors propose a semi-discrete, upwind finite-volume scheme that preserves the entropic structure of the equation at the semi-discrete level. Moreover, by construction, the method conserves the total mass and preserves the non-negativity of solutions also. A fully discrete scheme with the same properties was proposed in \cite{B.C.H2020}.

The generalisation of \cref{eq:continuous} to the case of two interacting species was studied in \cite{C.F.S2020,C.H.S2018} by extending the aforementioned semi-discrete method. Although the system generally lacks an entropic structure (even at the continuous level), the discretisation resembles strongly that of \cite{C.C.H2015}.
A fully discrete, implicit finite-volume scheme for the same system was additionally developed by \cite{C.F.S2020}. The authors prove the convergence of their methods (semi-discrete and fully discrete) to weak solutions of the equations through a technique of flow interchange: they construct an auxiliary functional whose dissipation contains important gradient information. The construction for a general non-linearity $H$ had already been employed in \cite{C.F.S+2018} in a semi-discrete setting, and it was adapted to prove convergence of the numerical method of \cite{C.F.S2020}.

We present a sketch of the estimate, adapted to \cref{eq:continuous}, at the continuous level. Given a convex internal energy density, $H$, we construct a function $K$ via \revisionTwo{$H''(s) = sK''(s)$}, for $s>0$. We readily find:
\begin{align} \label{eq:flowinterchange_continuous_formal}
	\begin{split}
		\frac{\d}{\dt} \int K(\rho) \dx
		&= \int K'(\rho) \pder{\rho}{t}\dx\\
		&= \int K'(\rho) \pder{}{x} \brk*{\rho\pder{}{x}\prt{H'(\rho)+V(x)+W(x)\conv\rho}}\dx\\
		&= - \int \rho \pder{K'}{x}(\rho) \pder{}{x}\prt{H'(\rho)+V(x)+W(x)\conv\rho}\dx\\
		&= - \int \abs*{\pder{}{x}H'(\rho)}^2 \dx - \int \pder{}{x} H'(\rho) \pder{}{x}\prt*{V(x) + W\conv\rho}\dx\\
		&\leq -(1-\alpha)\int \abs*{\pder{}{x}H'(\rho)}^2 \dx + \frac{1}{2\alpha}\int \abs{V'(x) + W'\conv\rho}^2\dx\\
		&\leq C  -(1-\alpha)\int \abs*{\pder{}{x}H'(\rho)}^2 \dx,
	\end{split}
\end{align}
\revisionTwo{having used integration by parts together with the no-flux boundary conditions,} and where $\alpha\in (0,1)$ arises from Young's inequality.

To argue the convergence of the method of \cite{B.C.H2020}, we will construct a family of appropriate interpolations of the discrete numerical approximations, which will be identified as elements of an $\Linf$ class. The adaptation of the flow interchange argument \eqref{eq:flowinterchange_continuous_formal}, accompanied by some additional \term{a priori} estimates, will show the strong convergence of the approximating sequence to a weak solution of \cref{eq:continuous}. As usual, these arguments do not yield a specific rate of convergence, though thorough numerical experiments in \cite{B.C.H2020} suggest first-order accuracy. Nevertheless, we establish a convergence result for their numerical method, filling this theoretical gap in the literature.

Future work might address the question of passing to the limit in the energy-dissipation inequality. While it is clear that $E(\rho(t)) \leq E(\rho(s))$ holds whenever $t\geq s$, the analysis of the limit of the non-linear Fisher information remains open. Furthermore, it would be interesting to ascertain if the long-time behaviour of \cref{eq:continuous} under suitable convex potentials is preserved by the fully-discrete scheme. This behaviour is well-understood at the continuous level \cite{C.M.V2003,A.G.S2005}, but, to the best of our knowledge, discrete results are only known for particular cases \cite{B.C.F2015,B.C2017}.

The rest of this work is organised as follows: in \cref{sec:numericalscheme} we recapitulate the numerical method, introduce the notion of weak solution, and present the main result;  \cref{sec:apriori_estimates} is dedicated to establishing the existence of solutions to the numerical method as well as proving certain \term{a priori} bounds; in \cref{sec:compactness} we employ the estimates to deduce strong compactness of the approximating sequence, and weak compactness of the discrete velocity term (which comprises the internal energy, the drift, and interaction terms); \cref{sec:convergence_scheme} is dedicated to proving the main result: the convergence of the approximating sequence to a weak solution of \Cref{eq:continuous}; finally, in \Cref{sec:linear} we adapt our convergence result to the case of linear diffusion; \revisionOne{the Boltzmann entropy is not covered by the main result because the upwind discretisation of the scheme is incompatible with the infinite speed of propagation associated with linear diffusion; nevertheless, the argument can be modified to prove the convergence of the scheme.}
 \section{Numerical Scheme} \label{sec:numericalscheme}

In this section we introduce the fully-discrete, implicit method of \cite{B.C.H2020} for the problem \eqref{eq:continuous} in one dimension, i.e.,
\begin{equation} \tag{\ref{eq:continuous_eq} revisited}
	\pder{\rho}{t} = \pder{}{x}\brk*{\rho\pder{}{x}\prt{H'(\rho)+V(x)+W(x)\conv\rho}}.
\end{equation}
The equation is posed on the domain $\Omega\coloneqq\prt{-L,L} \subset \R$, and a time interval $\prt{0,T}$. The equation is supplemented with no-flux boundary  conditions and a given non-negative initial datum, $\rho\prt{0,x}=\rho_0(x) \in L^\infty(\Omega)$. Throughout this work, we will denote the space-time cylinder by $\QT=\prt{0,T}\times\Omega$.

First, we introduce the discretisation of the domain.
\begin{definition} [Discretisation of the domain $\QT$]
	Let $M, N\in \mathbb{N}$ be two integers. The spatial domain, $\Omega$, is divided into $2M$ uniform cells of length $\Dx=L/M$. The \term{$i$\th cell}, denoted by $C\i\coloneqq\lbrk{x\imh,x\ih}$, is centred at $x\i$; the \term{cell centre} is located at  $x\i=-L+\Dx\prt{i-1/2}$, as shown in \cref{fig:discretisation}.

	The time domain is divided into $N+1$ equal intervals of length $\Dt=T/(N+1)$, for some integer $N\in \mathbb{N}$. The $n$\th interval is defined by $I\n\coloneqq\lbrk{t\n,t\np}$, with $t\n\coloneqq n\Dt$.

	These partitions give rise to a \term{computational mesh}, $\set{Q\i\n}$ which divides the space-time cylinder, $\QT$, into a family of \emph{finite-volume cells}, denoted by $Q\i\n=I\n \times C\i$, for $\revisionTwo{n\in \curlyN\coloneqq\set{0,\ldots, N}}$ and $i\in\curlyI\coloneqq\set{1, \ldots, 2M}$. The coarseness of a given mesh is measured by the \emph{mesh size} $h=\Dx=c\Dt$, for a fixed $c>0$.

	Finally, we define the \emph{dual cells}, $C_{i+1/2}\coloneqq\lbrk{x\i,x\ip}$, for $i \in \set{1,\dots, 2M-1}$, which make up the \emph{dual mesh}, $\set{Q\ih\n}$, where $Q\ih\n=I\n \times C\ih$.
\end{definition}

\begin{figure}[ht]
	\centering
	\tikzset{
	hatch distance/.store in=\hatchdistance,
	hatch distance=10pt,
	hatch thickness/.store in=\hatchthickness,
	hatch thickness=2pt
}

\makeatletter
\pgfdeclarepatternformonly[\hatchdistance,\hatchthickness]{flexible hatch}
{\pgfqpoint{0pt}{0pt}}
{\pgfqpoint{\hatchdistance}{\hatchdistance}}
{\pgfpoint{\hatchdistance-1pt}{\hatchdistance-1pt}}{
	\pgfsetcolor{\tikz@pattern@color}
	\pgfsetlinewidth{\hatchthickness}
	\pgfpathmoveto{\pgfqpoint{0pt}{0pt}}
	\pgfpathlineto{\pgfqpoint{\hatchdistance}{\hatchdistance}}
	\pgfusepath{stroke}
}
\makeatother

\begin{tikzpicture}[scale=0.9]
	\def\l{1.6}
	\def\h{0.2}
	\def\dh{2}

	\draw[thick] ({\l*2},{-\h}) -- ({\l*2},{\h});
	\draw[thick] ({\l*2.5},{0}) -- ({\l*2.5},{0.5*\h}) node[above=-2pt] {$x_1$};
	\draw[thick] ({\l*3},{-\h}) -- ({\l*3},{\h});
	\draw[thick] ({\l*3.5},{0}) -- ({\l*3.5},{0.5*\h}) node[above=-2pt] {$x_2$};
	\draw[thick] ({\l*4},{-\h}) -- ({\l*4},{\h});

	\draw[thick] ({\l*5.5},{-\h}) -- ({\l*5.5},{\h});
	\draw[thick] ({\l*6.5},{-\h}) -- ({\l*6.5},{\h});

	\draw[thick] ({\l*8},{-\h}) -- ({\l*8},{\h});
	\draw[thick] ({\l*8.5},{0}) -- ({\l*8.5},{0.5*\h}) node[above=-2pt] {$x_{2M-1}$};
	\draw[thick] ({\l*9},{-\h}) -- ({\l*9},{\h});
	\draw[thick] ({\l*9.5},{0}) -- ({\l*9.5},{0.5*\h}) node[above=-2pt] {$x_{2M}$};
	\draw[thick] ({\l*10},{-\h}) -- ({\l*10},{\h});

	\draw[thick] ({\l*2},0) --     ({\l*4},0);
	\draw[thick] ({\l*5.5},0) --     ({\l*6.5},0);
	\draw[thick,dotted] ({\l*4},0) -- ({\l*8},0);
	\draw[thick] ({\l*8},0) --     ({\l*10},0);

	\node[below=5pt] at ({\l*(2+0.5)},0) {$C_{1}$};
	\node[below=5pt] at ({\l*(3+0.5)},0) {$C_{2}$};
	\node[below=5pt] at ({\l*(8+0.5)},0) {$C_{2M-1}$};
	\node[below=5pt] at ({\l*(9+0.5)},0) {$C_{2M}$};
	\node[below=5pt] at ({\l*(6)},0) {$C\i$};
	\draw[thick] ({\l*6},{0}) -- ({\l*6},{0.5*\h}) node[above=-2pt] {$x\i$};

	\draw[<->,thick] ({\l*3},-0.8) -- ({\l*4},-0.8) node[midway, below] {$\Dx$};

	\draw[->] ({\l*(2-0.3)},{-\h*(1+3)}) node[below]{$x_{1/2}=-L$} -- ({\l*(2-0.05)},{-\h*(1.5)});
	\draw[->] ({\l*(10+0.3)},{-\h*(1+3)}) node[below]{$x_{2M+1/2}=L$} -- ({\l*(10+0.05)},{-\h*(1.5)});

	\draw[->] ({\l*(5.5-0.2)},{-\h*(1+3)}) node[below]{$x\imh$} -- ({\l*(5.5-0.05)},{-\h*(1.5)});
	\draw[->] ({\l*(6.5+0.2)},{-\h*(1+3)}) node[below]{$x\ih$} -- ({\l*(6.5+0.05)},{-\h*(1.5)});

	\draw[white, thin, pattern=flexible hatch, hatch distance=5pt, hatch thickness=0.25pt, pattern color=black!50!white]
	({\l*2.0},{-\h+\dh}) rectangle
	({\l*2.5},{+\h+\dh});

	\draw[white, thin, pattern=flexible hatch, hatch distance=5pt, hatch thickness=0.25pt, pattern color=black!50!white]
	({\l*9.5},{-\h+\dh}) rectangle
	({\l*10.0},{+\h+\dh});

	\draw[thick] ({\l*2.5},{-\h+\dh}) -- ({\l*2.5},{\h+\dh});
	\draw[thick] ({\l*3},{0+\dh}) -- ({\l*3},{0.5*\h+\dh}) node[above=-2pt] {$x_{1+\nhalf}$};
	\draw[thick] ({\l*3.5},{-\h+\dh}) -- ({\l*3.5},{\h+\dh});

	\draw[thick] ({\l*5},{-\h+\dh}) -- ({\l*5},{\h+\dh});
	\draw[thick] ({\l*6},{-\h+\dh}) -- ({\l*6},{\h+\dh});
	\draw[thick] ({\l*7},{-\h+\dh}) -- ({\l*7},{\h+\dh});

	\draw[thick] ({\l*8.5},{-\h+\dh}) -- ({\l*8.5},{\h+\dh});
	\draw[thick] ({\l*9},{0+\dh}) -- ({\l*9},{0.5*\h+\dh}) node[above=-2pt] {$x_{2M-\nhalf}$};
	\draw[thick] ({\l*9.5},{-\h+\dh}) -- ({\l*9.5},{\h+\dh});

	\draw[thick] ({\l*2.5},{0+\dh}) --     ({\l*3.5},{0+\dh});
	\draw[thick] ({\l*5.0},{0+\dh}) --     ({\l*7.0},{0+\dh});
	\draw[thick,dotted] ({\l*3.5},{0+\dh}) -- ({\l*8.5},{0+\dh});
	\draw[thick] ({\l*8.5},{0+\dh}) --     ({\l*9.5},{0+\dh});

	\node[below=5pt] at ({\l*(3+0.0)},{0+\dh}) {$C_{1+\nhalf}$};
	\node[below=5pt] at ({\l*(8+1.0)},{0+\dh}) {$C_{2M-\nhalf}$};
	\node[below=5pt] at ({\l*(5.5)},{0+\dh}) {$C\imh$};
	\node[below=5pt] at ({\l*(6.5)},{0+\dh}) {$C\ih$};
	\draw[thick] ({\l*5.5},{0+\dh}) -- ({\l*5.5},{0.5*\h+\dh}) node[above=-2pt] {$x\imh$};
	\draw[thick] ({\l*6.5},{0+\dh}) -- ({\l*6.5},{0.5*\h+\dh}) node[above=-2pt] {$x\ih$};
\end{tikzpicture} 	\caption{Discretisation of the spatial domain $\Omega$.}
	\label{fig:discretisation}
\end{figure}

We now proceed to discretise the problem \eqref{eq:continuous}. First, we construct the discrete initial datum by \revisionOne{$\rho^0\coloneqq\set{\rho\i^0}_{i\in\curlyI}$}, on the discretised spatial domain through the cell averages of the continuous datum:
\begin{align} \label{eq:cellaveragedata}
	\rho\i^0 = \dashint_{C\i} \rho_0(x) \dx,
\end{align}
where $\dashint_{C\i} f(s) \ds = \frac{1}{\abs{C\i}} \int_{C\i} f(s) \ds$ denotes the average of $f$ on the $i$\th cell $C\i$. To discretise \Cref{eq:continuous_eq}, we integrate it over a test cell, $Q\i\n$, which yields
\begin{align}
	\int_{C\i} \rho(t\np, x)\dx - \int_{C\i} \rho(t\n, x)\dx + \int_{I\n} F(t, x\ih) - F(t, x\imh)\dt = 0,
\end{align}
where
\begin{align}
	\revisionTwo{F\prt{t,x} \coloneqq -\rho\pder{}{x}\prt{H'(\rho)+V(x)+W(x)\conv\rho} }
\end{align}
is the \term{flux}. This identity can be approximated by
\begin{align}
	\Dx \prt*{\rho\np\i - \rho\n\i} + \Dt \prt*{F\np\ih - F\np\imh}= 0.
\end{align}
Giving rise to the numerical scheme
\begin{subequations} \label{eq:discrete}
\begin{align+}
\frac{\rho\i\np-\rho\i\n}{\Dt} = -\frac{F\ih\np-F\imh\np}{\Dx},
\end{align+}
for $i\in\curlyI$, and $n\in\curlyN$. The \term{discrete solution}, $\rho\i\n$, approximates the continuous solution in the finite-volume sense,
\begin{align} \label{eq:cellaverage}
	\rho\i\n \simeq \dashint_{C\i} \rho(t\n,x) \dx.
\end{align}
The \term{numerical fluxes}, $F\ih\n$, are given by the upwind discretisation
\begin{align+}
F\ih\np = \rho\i\np\pos{u\ih\np} + \rho\ip\np\neg{u\ih\np},
\end{align+}
where $\pos{s}\coloneqq\max\set{s,0}$ and $\neg{s}\coloneqq\min\set{s,0}$, for any $s\in \R$, denote respectively the positive and negative parts of $s$. Choosing $F_{\nhalf}\np = F_{2M+\nhalf}\np = 0$ incorporates the no-flux boundary conditions into the scheme.

In order to define the \term{velocities}, $u\ih\np$, it is convenient to introduce the \term{discrete entropy variables},
\begin{align+} \label{eq:discrete_entropy_variables}
\xi\i\np = H'(\rho\i\np ) + V\i + (W\conv\rho\nss)\i,
\end{align+}
and to set
\begin{align+}
u\ih\np = -\frac{\xi\ip\np -\xi\i\np }{\Dx}.
\end{align+}
The contributions from the external and interaction potentials are discretised as
\begin{align+}
V\i = \dashint_{C\i} V(s) \ds,
\end{align+}
and $(W\conv\rho\nss)\i=\sum_{j=1}^{2M}W_{i-j}\rho_j\nss\Dx$, where
\begin{align+}
W_{i-j} = \dashint_{C\j} W(x\i-s) \ds.
\end{align+}
\end{subequations}

At this stage, we highlight once more that the evolution of the density, $\rho$, is governed by an entropic part, which works to minimising an internal energy, as well as a potential part, that generates a drift. For convenience of the analysis, we will hereafter split these contributions at the discrete level:
\begin{subequations} \label{eq:velocity_split_entropic_potential}
\begin{align+}
u\ih\np = h\ih\np + v\ih\np,
\end{align+}
with
\begin{align+}
h\ih\np \coloneqq -\frac{H'(\rho\ip\np) - H'(\rho\i\np)}{\Dx}
\end{align+}
and
\begin{align+}
v\ih\np \coloneqq -\frac{V\ip - V\i}{\Dx} - \frac{(W\conv\rho\nss)\ip - (W\conv\rho\nss)\i}{\Dx}.
\end{align+}
\end{subequations}

\begin{remark}[Choice of $\rho\nss$]
	The density present in the interaction term, $\rho\nss$, may be chosen as
	$$
		\rho\i\nss \in \set*{\rho\i\n, \rho\i\np,\frac{\rho\i\n+\rho\i\np}{2}},
	$$
	with $i\in\curlyI$ and $n\in\curlyN$. As discussed in \cite[Theorem  3.9]{B.C.H2020}, this choice depends on certain properties of the potential and serves to establish a discrete analogue of the energy dissipation \eqref{eq:energydissipation}. In fact, it can be shown that the discrete energy is unconditionally dissipated if
	\begin{enumerate}[(i)]
		\item $\rho^{**}=\prt{\rho^n+\rho^{n+1}}/{2}$;
		\item $\rho^{**}=\rho^n$ and the potential $W$ is negative definite;
		\item $\rho^{**}=\rho^{n+1}$ and the potential $W$ is positive definite.
	\end{enumerate}
	The interaction potential, $W$, is called negative definite (resp. positive definite) if
	\begin{align}
		\sum_{i,j=1}^{2M}\Wik(\eta\i-\zeta\i)(\eta\j-\zeta\j)\leq 0 \qquad \prt{\textrm{resp. } \geq 0},
	\end{align}
	for any two vectors $\eta$ and $\zeta$.
	For the readers' convenience, we reiterate how the discrete energy-dissipation inequality is obtained in \cref{sec:existence}.

	In the sequel, we will only consider the most general case: $\rho^{**} = (\rho\n + \rho\np)/2$. Nonetheless, the analysis is the same for the other choices of $\rho^{**}$, since the quantity only plays a small role in the fixed-point argument of Theorem \ref{thm:existence} and appears inside the (conserved) $\Lone$-norm elsewhere.
\end{remark}

Before proving the analytical properties of scheme \eqref{eq:discrete}, we introduce the necessary notation for the subsequent sections.

\begin{definition} [Piecewise constant interpolation]
	Given a discrete solution $\set{\rho\i\n}$, we define the piecewise constant interpolations
	\begin{align} \label{eq:interpolation}
		\rho\h(t,x) \coloneqq \rho\i\n \textrm{ for } \prt{t,x}\in Q\i\n,
	\end{align}
	for $i\in\curlyI$ and $n\in\curlyN$.  Moreover, for any function $\eta:\R \to \R$, we use the notation $\eta\i\n\coloneqq\eta(\rho\i\n)$, as well as $\eta\h \coloneqq \eta \circ \rho\h$.

	Given a quantity defined at the cell interfaces, $x\ih$, for $i=1,\dots,2M-1$ (such as the velocity $u\ih\n$ and the flux $F\ih\n$), we define its associated piecewise constant interpolations as
	\begin{align} \label{eq:interpolationdual}
		\zeta\h(t,x) \coloneqq \zeta\ih\n \textrm{ for } \prt{t,x}\in Q\ih\n,
	\end{align}
	and $\zeta\h(t,x) = 0$ whenever  if $x<x_{1/2}$ or $x>x_{2M+1/2}$.
\end{definition}

We now establish a bridge between the discrete approximations from scheme \eqref{eq:discrete} and the $\Lp$-functions through the piecewise constant interpolations, using the standard norms:
\begin{align}
	\psnorm{p}{\Omega}{\eta\h(t)}
	=\prt*{\int_{\Omega}\abs{\eta\h(t,x)}^p\dx}^{1/p}
	=\prt*{\sum_{i=1}^{2M}\abs{\eta\i\n}^p\Dx}^{1/p},
\end{align}
for $t\in I\n$, as well as
\begin{align}
	\psnorm{p}{\QT}{\eta\h}
	=\prt*{\int_{0}^{T}\int_{\Omega}\abs{\eta\h(t,x)}^p\dx\dt}^{1/p}
	=\prt*{\sum_{n=0}^{N}\sum_{i=1}^{2M}\abs{\eta\i\n}^p\Dx\Dt}^{1/p};
\end{align}
these are analogously described for quantities defined on the dual mesh.

Last, but not least, we define the discrete gradients, which occur naturally at the cell interfaces in our analysis.
\begin{definition}[Discrete gradients]
	We define the discrete gradient, $\ddx\eta\h$, for a function, $\eta\h$, as
	\begin{align}
		\ddx\eta\n\ih\coloneqq\frac{\eta\n\ip-\eta\n\i}{\Dx} \quad\textrm{and}\quad \ddx\eta\h\prt{t, x} = \ddx\eta\n\ih,
	\end{align}
	for $\prt{t,x}\in Q\ih\n$ and $1\leq i\leq 2M-1$.

	In the same vein, the discrete gradient of a quantity defined on the dual mesh is given by
	\begin{align}
		\ddx\zeta\i\n\coloneqq\frac{\zeta\ih\n-\zeta\imh\n}{\Dx} \quad\textrm{and}\quad \ddx\zeta\h\prt{t, x} = \ddx\zeta\n\i,
	\end{align}
	for $\prt{t,x}\in Q\i\n$ and $i\in\curlyI$. Note that the values of $\zeta\n_{1/2}$ and $\zeta\n_{2M+1/2}$ should either be clear from the boundary conditions or otherwise prescribed.
\end{definition}

We conclude this section by introducing our notion of weak solution and stating the main convergence result.

\begin{definition} [Weak solutions] \label{def:weaksolution}
	A function $\rho\in \Linf(\QT)$ is a weak solution to \Cref{eq:continuous} if it satisfies
	\begin{align}
		\revisionTwo{\int_0^T\int_\Omega \rho \pder{\varphi}{t} - \rho \pder{\varphi}{x} \pder{}{x} \prt*{H'(\rho)  + V + W\conv\rho} \dx \dt = -\int_\Omega \rho(0) \varphi(0) \dx,}
	\end{align}
	for any smooth test function $\varphi\in C^\infty(\overline{Q_T})$ such that $\varphi(T)=0$.
\end{definition}

\begin{theorem} [Convergence of the scheme] \label{th:convergence}
	Suppose that $V, W \in C^2([-L, L])$, and also that $H\in C^2([0,\infty))$, such that $H''(s)>0$ for all positive values of $s$, as well as \revisionTwo{$s^{-1} H''(s) \in L^1_{\mathrm{loc}}([0,\infty))$}. Moreover, suppose that there exists a function, $K$, bounded from below, such that $sK''(s) = H''(s)$.
	Then, for any non-negative initial datum $\rho_0\in\Linf(\Omega)$, we find that:
	\begin{enumerate}[(i)]
		\item scheme \eqref{eq:discrete} admits a non-negative solution that preserves the initial mass regardless of the mesh size;
		\item under the condition ${\curlyC}_V^{(2)}\Dt < 1$ (viz. \cref{th:aprioriLinfboundrho}), the associated piecewise constant interpolation, $\rho_h$, converges strongly in any $\Lp(\QT)$, for $1\leq p<\infty$, up to a subsequence;
		      \revisionOne{here, $\cVtwo = \psnorm{\infty}{\Omega}{V''} + \psnorm{\infty}{\Omega}{W''} \psnorm{1}{\Omega}{\rho_0}$;}
		\item the limit is a weak solution to \Cref{eq:continuous} in the sense of \Cref{def:weaksolution}.
	\end{enumerate}
\end{theorem}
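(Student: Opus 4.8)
The plan is to establish the three assertions in turn, taking as given the fixed-point construction and the a priori bounds developed in \cref{sec:apriori_estimates}. \emph{Assertion (i)} I would dispatch first. For one time step, the implicit system \eqref{eq:discrete} for $\rho\np$ given $\rho\n$ is finite-dimensional, and it is solved by a topological fixed-point argument (Brouwer applied to the continuous map sending a trial density to the solution of the \emph{linear} system obtained by freezing the velocities $u\ih\np$ and the quantity $\rho\nss$); the conserved total mass confines this map to a fixed ball of $\R^{2M}$, which suffices. This is exactly \cref{thm:existence}. Non-negativity is the discrete maximum principle: at a cell realising $\min_i\rho\i\np$, the sign pattern of the upwind fluxes $F\ih\np,F\imh\np$ in \eqref{eq:discrete} forces $\min_i\rho\i\np\geq\rho\i\n\geq0$. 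Mass conservation comes from summing \eqref{eq:discrete} over $i\in\curlyI$: the interior fluxes telescope, $F_{\nhalf}\np=F_{2M+\nhalf}\np=0$, and the $\rho\i^0$ are cell averages, so $\sum_i\rho\i\np\Dx=\cdots=\sum_i\rho\i^0\Dx=\psnorm{1}{\Omega}{\rho_0}$.

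\emph{A priori estimates and the main difficulty.} The core of the argument is a fully discrete counterpart of the flow-interchange computation \eqref{eq:flowinterchange_continuous_formal}. With $\mathcal{K}\n\coloneqq\Dx\sum_i K(\rho\i\n)$ and convexity of $K$ (note $sK''(s)=H''(s)\geq0$), one has $K(\rho\i\np)-K(\rho\i\n)\leq K'(\rho\i\np)\prt{\rho\i\np-\rho\i\n}$; inserting \eqref{eq:discrete}, summing over $\curlyI$ and summing by parts (boundary fluxes vanish) gives $\mathcal{K}\np-\mathcal{K}\n\leq\Dt\,\Dx\sum_i F\ih\np\,\ddx K'(\rho\np)\ih$. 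The crucial lemma is the pointwise bound
\[
	F\ih\np\,\ddx K'(\rho\np)\ih\leq-\tfrac12\,\abs{\ddx H'(\rho\np)\ih}^2+\tfrac12\,\abs{v\ih\np}^2,
\]
which I would prove by a short case analysis on the upwind direction: assuming $\rho\ip\np\geq\rho\i\np$ (the other case symmetric), the identity $H''=sK''$ yields both $\rho\i\np\brk{K'(\rho\ip\np)-K'(\rho\i\np)}\leq H'(\rho\ip\np)-H'(\rho\i\np)$ and $\rho\ip\np\brk{K'(\rho\ip\np)-K'(\rho\i\np)}\geq H'(\rho\ip\np)-H'(\rho\i\np)$ (because $\frac{\rho\i\np}{s}\leq1\leq\frac{\rho\ip\np}{s}$ on the interval of integration); feeding the appropriate one into $F\ih\np=\rho\i\np\pos{u\ih\np}+\rho\ip\np\neg{u\ih\np}$ with $u=h+v$, and using Young's inequality, produces the claim. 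Since $\abs{v\ih\np}\leq\cVone$ (from $V,W\in\Ctwo$ and the conserved $\Lone$-mass), summing in $n$, using $K$ bounded below (so $\mathcal{K}\n\geq-C\abs{\Omega}$ uniformly) and $\mathcal{K}^0$ finite ($\rho_0\in\Linf$, $K$ continuous), I obtain $\psnorm{2}{\QT}{\ddx H'(\rho_h)}\leq C$. Together with $\psnorm{\infty}{\QT}{\rho_h}\leq C$ from \cref{th:aprioriLinfboundrho} (valid precisely under $\cVtwo\Dt<1$, the sole role of the stability condition), and with $F=\rho u$, $h_h=-\ddx H'(\rho_h)$, $v_h$ bounded in $\Linf$, this gives $\psnorm{2}{\QT}{u_h}+\psnorm{2}{\QT}{F_h}\leq C$, and hence control of the discrete time increments $(\rho\i\np-\rho\i\n)/\Dt=-\ddx F\ih\np$ in a discrete negative-order norm. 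I expect this discrete flow-interchange estimate — making \eqref{eq:flowinterchange_continuous_formal} rigorous at the upwind, fully implicit level under only $H''=sK''\geq0$, $s^{-1}H''\in L^1_{\mathrm{loc}}$ and $K$ bounded below (no non-degeneracy of $H$ near the vacuum) — to be the principal obstacle; everything downstream is a fairly standard assembly.

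\emph{Assertion (ii): compactness.} The gradient bound gives a uniform space-translation estimate for $H'(\rho_h)$. For the time direction, I would test the telescoped scheme (expressing $\rho\i\nk-\rho\i\n$ as a sum of flux differences) against $H'(\rho\i\nk)-H'(\rho\i\n)$ and sum in $i$ and $n$: monotonicity of $H'$ makes the left side a non-negative sum and the flux bound controls the right side, so $\int\!\!\int\prt{\rho_h(t+\tau)-\rho_h(t)}\prt{H'(\rho_h(t+\tau))-H'(\rho_h(t))}\dx\,\dt\leq C(\tau+\Dt)$. As $H\in\Ctwo([0,\infty))$, $H''$ is bounded on the range of $\rho_h$, so $\abs{H'(a)-H'(b)}^2\leq C(a-b)\prt{H'(a)-H'(b)}$ and the estimate upgrades to a time-translation estimate for $H'(\rho_h)$ in $\Ltwo$. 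A discrete Aubin--Lions / Kolmogorov--Riesz compactness argument then yields relative compactness of $H'(\rho_h)$ in $\Ltwo(\QT)$: along a subsequence $H'(\rho_h)\to g$ in $\Ltwo(\QT)$ and a.e. Since $H'$ is a continuous strictly increasing bijection of $[0,\psnorm{\infty}{\QT}{\rho_h}]$ onto its range, $\rho_h=(H')^{-1}\prt{H'(\rho_h)}\to(H')^{-1}(g)\eqqcolon\rho$ a.e., hence by dominated convergence and the $\Linf$ bound $\rho_h\to\rho$ in $\Lp(\QT)$ for every $1\leq p<\infty$, and $g=H'(\rho)$.

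\emph{Assertion (iii): the limit is a weak solution.} From the strong convergence of $\rho_h$ and the $\Ltwo$-bound on $\ddx H'(\rho_h)$, I would identify $\ddx H'(\rho_h)\rightharpoonup\partial_x H'(\rho)$ weakly in $\Ltwo(\QT)$; since $V,W\in\Ctwo$ and $W\conv\rho_h\to W\conv\rho$, the drift interpolation satisfies $v_h\to V'+W'\conv\rho$, so $u_h\rightharpoonup-\partial_x\prt{H'(\rho)+V+W\conv\rho}$ weakly in $\Ltwo(\QT)$. Next, multiply \eqref{eq:discrete} by $\Dx\,\Dt\,\varphi\i\n$ for a test function $\varphi\in C^\infty(\overline{\QT})$ with $\varphi(T)=0$ and sum by parts in $n$ and $i$: the discrete time term converges to $\int_0^T\!\int_\Omega\rho\,\partial_t\varphi$ (strong convergence of $\rho_h$, uniform convergence of the difference quotients of $\varphi$) and the initial term to $-\int_\Omega\rho_0\,\varphi(0)$. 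For the flux term, write the upwind flux as $F\ih\np=\tfrac12\prt{\rho\i\np+\rho\ip\np}u\ih\np-\tfrac12\prt{\rho\ip\np-\rho\i\np}\abs{u\ih\np}$: the centred part converges to $-\rho\,\partial_x\prt{H'(\rho)+V+W\conv\rho}$ as a product of a strongly convergent sequence (the mean of $\rho_h$ and its one-cell shift, both $\to\rho$ in $\Ltwo(\QT)$) and a weakly convergent one ($u_h$), while the numerical-diffusion part vanishes because $\rho_h(\cdot+\Dx)-\rho_h\to0$ strongly in $\Ltwo(\QT)$ (strong convergence of $\rho_h$ and $\Dx\to0$) whereas $\abs{u_h}$ is bounded in $\Ltwo(\QT)$. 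Passing to the limit reproduces exactly the weak formulation of \cref{def:weaksolution}, which completes the proof.
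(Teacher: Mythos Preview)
Your proposal is correct and follows essentially the same route as the paper: the discrete flow-interchange estimate you sketch (via the inequalities $\rho\i\np\brk{K'(\rho\ip\np)-K'(\rho\i\np)}\leq H'(\rho\ip\np)-H'(\rho\i\np)\leq\rho\ip\np\brk{K'(\rho\ip\np)-K'(\rho\i\np)}$) is exactly the content of the paper's ``generalised entropic average'' lemma, the translate estimates and Kolmogorov--Riesz compactness match \cref{th:timeshift,th:spaceshift,thm:compactness_entropic_part}, and your centred-plus-numerical-diffusion splitting of the upwind flux is an algebraically equivalent rearrangement of the paper's decomposition $F\ih\np=\rho\i\np u\ih\np+(\rho\ip\np-\rho\i\np)\neg{u\ih\np}$ used in \cref{sec:convergence_scheme}. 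One small caveat: your one-line non-negativity argument (``$\min_i\rho\i\np\geq\rho\i\n$'') is not quite right as stated---the minimum can decrease under drift---but since you correctly defer to \cref{thm:existence}, whose cluster-contradiction argument is the actual mechanism, this does not affect the validity of your outline.
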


\begin{remark}[Assumptions throughout this work]
	\revisionTwo{
		In the sequel, we assume that the assumptions of \cref{th:convergence} are satisfied, unless otherwise stated.
	}
\end{remark}

\begin{remark}[Assumptions on $H$, $V$, and $W$]
	\revisionTwo{
	\cref{th:convergence} makes the assumption $s^{-1} H''(s) \in L^1_{\mathrm{loc}}([0,\infty))$. This property is employed in the proof of \cref{th:discreteHgradientestimate}, in order to ensure that the function $K'$ is defined at zero. In terms of common choices of $H$, the assumption rules out linear diffusion, corresponding to $H(\rho) = \rho \log\prt{\rho} - \rho$, and porous medium diffusion, $H(\rho)=\rho^m/\prt{m-1}$, whenever $m\leq 2$.
	}

	\revisionTwo{
		The result of \cref{th:discreteHgradientestimate} can nevertheless be obtained for the entire porous medium range, $H(\rho)=\rho^m/\prt{m-1}$ for $m>1$, by relaxing the assumption to $\lim_{\epsilon\to 0} \epsilon H''(\epsilon) = 0$. The proof, given in \cref{th:discreteHgradientestimateRelaxed}, handles the singularity by defining a regularised form of $K'$.
	}

	\revisionTwo{
	The theorem also assumes $H\in C^2([0,\infty))$, because our proof of compactness (\cref{th:timeshift,th:spaceshift} in particular) requires $H'$ to be locally Lipschitz. Independently of the previous discussion, this again excludes linear diffusion, as well as porous medium diffusion for $m<2$, all of which fail at the origin. Consequently, with the additional work in \cref{th:discreteHgradientestimateRelaxed}, the result in \cref{th:convergence} also applies to the power law case $m=2$, relevant in mathematical biology applications \cite{C.H.S2018,C.M.S+2019}.
	}

	\revisionTwo{
		The assumption on $H$ can be relaxed further if, in turn, the initial datum is strictly positive. Given $\rho^0>0$, \cref{th:aprioriLinfboundrho} guarantees the positivity of the entire solution. The proof is generalised to include linear diffusion and all the porous medium cases, bypassing the issues at the origin altogether. This extension is immediate for porous media, and the case of linear diffusion in detailed in \cref{sec:linear}.
	}

	The compactness proof in \cref{sec:compactness} relies on the local Lipschitz continuity of $H'$ as well as the bounds in \cref{th:aprioriLinfboundrho}. These bounds are general for the scheme with periodic boundary conditions, but require an additional assumption to handle the no-flux boundary conditions; namely, a sign on the normal spatial derivative of $\prt{V+W\conv\rho}$ at the boundary of $\Omega$. A non-negative (respectively non-positive) sign guarantees the upper (resp. lower) bound.
\end{remark}

\begin{remark}[Periodic boundary conditions]
	\revisionTwo{
		The analysis and numerical solution of \Cref{eq:continuous} are often performed in the setting of no-flux boundary conditions, as is done in this work. Nevertheless, all of our results can be immediately generalised to the case of periodic boundary conditions. This is because our analysis only exploits the lack of boundary terms when performing integration/summation by parts.
	}
\end{remark} \section{\textit{A Priori} Estimates} \label{sec:apriori_estimates}

The purpose of this section is threefold. We begin by proving existence of non-negative solutions to scheme \eqref{eq:discrete}, showing that the numerical method preserves the mass, and stating the discrete energy-dissipation property. Next, by mimicking estimate \eqref{eq:flowinterchange_continuous_formal} at the discrete level, we obtain discrete gradient information of $H'$, which, in return, will yield strong compactness of the density, $\rho\h$, itself. We conclude the section by establishing uniform $L^\infty$-bounds and a control from below of the solution.

\subsection{Existence of Solutions and Energy Dissipation Property} \label{sec:existence}

We proceed to show existence through a fixed-point argument.

\begin{theorem} [Existence of solutions] \label{thm:existence}
	Let $\rho_0(x)\in \Lone(\Omega)$ be a non-negative initial datum for \Cref{eq:continuous}. Then, there exist a unique, non-negative solution $\set{\rho\i\n}$, $(i,n)\in \curlyI\times\curlyN$, to scheme \eqref{eq:discrete}. Furthermore, the mass of the solution is conserved, i.e.,
	\begin{align}
		\psnorm{1}{\Omega}{\rho\h(t)} = \psnorm{1}{\Omega}{\rho_0}
	\end{align}
	for all $t\in [0,T]$.
\end{theorem}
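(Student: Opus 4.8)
The plan is to build the discrete solution inductively in time: given $\rho^n$, produce $\rho^{n+1}$ by a topological fixed-point argument, and then read off non-negativity and mass conservation directly from the algebraic structure of scheme \eqref{eq:discrete}.

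\emph{Existence at one time step.} Fix $n\in\curlyN$ and suppose $\rho^n\ge 0$ with $\sum_i \rho\i^n\,\Dx = m_0\coloneqq\psnorm{1}{\Omega}{\rho_0}$. Let $\mathcal K\coloneqq\{\sigma\in\R^{2M}:\sigma\i\ge 0,\ \sum_i\sigma\i\,\Dx=m_0\}$, a compact convex set. Given $\sigma\in\mathcal K$ — viewed as a provisional value of $\rho^{n+1}$ — freeze the velocities $u\ih\np=u\ih\np(\sigma)$ by evaluating the entropy variables $\xi\i=H'(\sigma\i)+V\i+(W\conv\sigma^{**})\i$, with $\sigma^{**}=(\rho^n+\sigma)/2$ in the general case. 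With those velocities frozen, \eqref{eq:discrete} reduces to the \emph{linear} system $A(\sigma)\,\rho^{n+1}=\rho^n$, and the upwind construction makes $A(\sigma)$ have positive diagonal, non-positive off-diagonal entries, and unit column sums (the latter being precisely the telescoping identity behind mass conservation). Hence $A(\sigma)$ is strictly diagonally dominant by columns, so it is invertible and a nonsingular M-matrix, whence $A(\sigma)^{-1}$ is entrywise non-negative. Setting $\Phi(\sigma)\coloneqq A(\sigma)^{-1}\rho^n$, we get $\Phi(\sigma)\ge 0$ (since $\rho^n\ge 0$) and $\sum_i\Phi(\sigma)\i\,\Dx=m_0$ (since $\mathbf 1^{\top}A(\sigma)=\mathbf 1^{\top}$ forces $\mathbf 1^{\top}A(\sigma)^{-1}=\mathbf 1^{\top}$), so $\Phi$ maps $\mathcal K$ into itself. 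Since $H\in C^2$, the chain $\sigma\mapsto\xi(\sigma)\mapsto u(\sigma)\mapsto A(\sigma)$ is continuous and $A(\sigma)$ stays invertible, so $\Phi$ is continuous; Brouwer's fixed-point theorem then supplies $\rho^{n+1}\in\mathcal K$ with $\Phi(\rho^{n+1})=\rho^{n+1}$, which is exactly a non-negative, mass-preserving solution of \eqref{eq:discrete} on $I^n$. Iterating over $n$, starting from $\rho^0\in\mathcal K$ (valid by \eqref{eq:cellaveragedata}), produces the full discrete solution and yields $\psnorm{1}{\Omega}{\rho\h(t)}=m_0$ for all $t\in[0,T]$. (A Schaefer/Leray--Schauder argument, using mass conservation and the bound $\sigma\i\le m_0/\Dx$ as the a priori estimate, is an equally valid alternative; working on $\mathcal K$ simply makes non-negativity automatic.)

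\emph{Uniqueness --- the main obstacle.} Suppose $\rho$ and $\tilde\rho$ both solve \eqref{eq:discrete} at step $n+1$ for the same $\rho^n$. I would set $w=\rho-\tilde\rho$, subtract the two identities, test against the difference of entropy variables $\xi\i-\tilde\xi\i$ (a natural choice, since $u\ih\np$ is, up to sign and scaling, the discrete gradient of $\xi\np$), and sum by parts, the no-flux condition clearing all boundary terms. This produces the term $\sum_i w\i\bigl(H'(\rho\i)-H'(\tilde\rho\i)\bigr)\,\Dx$, which is non-negative and vanishes only when $w\equiv 0$ because $H''>0$; a non-negative contribution stemming from the monotonicity of the upwind flux in its velocity argument (using $\rho,\tilde\rho\ge 0$); and remainder terms carrying $V$ and $W$, which a priori have no sign. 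The $V$-part cancels identically since $V\i$ is solution-independent, while the $W$-part reduces to a discrete quadratic form in $w$ built from $\Wik$: for $\rho^{**}=\rho^n$ (used when $W$ is negative definite) it vanishes, for $\rho^{**}=\rho^{n+1}$ (used when $W$ is positive definite) it carries the helpful sign, and in each case it can be moved to the left and absorbed. Closing this absorption cleanly --- together with the flux remainder, and ideally without any restriction on $\Dt$ --- is the delicate point, and I expect it, rather than existence, to be where the real difficulty lies; a fallback for the remaining case is a monotonicity/contraction estimate on the fixed mesh, exploiting that all the nonlinear terms are Lipschitz on the bounded range $[0,m_0/\Dx]$ of the discrete solution.
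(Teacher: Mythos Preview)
Your existence argument follows the same Brouwer fixed-point strategy as the paper: freeze the velocities at a provisional value $\sigma$, solve the resulting linear system for the image $\tilde\theta$, and show that the map $\sigma\mapsto\tilde\theta$ sends a compact convex set to itself. The difference lies in how non-negativity of $\tilde\theta$ is obtained. The paper argues by contradiction: if $\tilde\theta$ had a contiguous cluster of negative entries, summing the scheme over that cluster and evaluating the boundary fluxes yields a sign contradiction. You instead observe that the frozen linear system has the form $A(\sigma)\tilde\theta=\rho^n$ with $A(\sigma)$ a strictly column-diagonally-dominant M-matrix (unit column sums, positive diagonal, non-positive off-diagonal), so that $A(\sigma)^{-1}\ge 0$ and well-definedness, non-negativity, and mass conservation all drop out simultaneously. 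Both routes are valid; yours is arguably cleaner and, in particular, makes the well-definedness of the fixed-point map explicit, a point the paper simply asserts (``as a composition of continuous functions, the operator $\mathcal G$ itself is continuous'') without verifying that the implicit relation defining $\mathcal G$ actually has a unique solution.

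On uniqueness: you are right to flag it as the delicate part, but be aware that the paper's own proof does not address it at all --- the argument stops at Brouwer, which gives only existence. So there is nothing in the paper to compare your uniqueness sketch against; the word ``unique'' in the theorem statement is not actually established in the proof as written.
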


\begin{proof}
	The existence proof is based on an application of Brouwer's fixed-point theorem. To this end, we consider the compact, convex set
	\begin{align}
		\curlyX\coloneqq\set*{\theta\in\R^{2M} \mid \theta\i\geq 0, \text{ for } i\in\curlyI, \text{ and } \sum_{i=1}^{2M} \theta\i\Dx \leq \psnorm{1}{\Omega}{\rho_0}},
	\end{align}
	define a function $\curlyG:\curlyX\to\R^{2M}$, and prove that this function is, indeed, a fixed-point operator.

	Suppose that the solution of scheme \eqref{eq:discrete}, $\set{\rho\i\n}_i$, is known at time $t\n$, for some $n\in\curlyN$. The function, $\curlyG$, is defined through the implicit relation $\tilde\theta = \curlyG\prt{\theta}$, where
	\begin{align} \label{eq:implicitrelation}
		\tilde\theta\i = \rho\i\n - \frac{\Dt}{\Dx} \prt{\tilde F\ih - \tilde F\imh},
	\end{align}
	for $i\in \curlyI$. The fluxes (which would coincide with the numerical fluxes at the fixed point) read
	\begin{align} \label{eq:implicitrelationflux}
		\tilde F\ih = \tilde \theta\i \pos{u\ih} + \tilde \theta\ip \neg{u\ih},
	\end{align}
	where $i=1,\ldots, 2M-1$, and $\tilde F_{1/2} = \tilde F_{2M+1/2} = 0$, to account for the boundary conditions. Note that the velocity terms are evaluated at the argument of the operator, $\theta$, rather than $\tilde\theta$:
	\begin{align}
		u\ih = - \frac{\xi\ip - \xi\i}{\Dx},
	\end{align}
	and $\xi\i = H'(\theta\i) + V_i + (W\conv \theta\nss)_i$, where $\theta\nss = \prt{\rho\n+\theta}/2$.

	It is easy to see that, as a composition of continuous functions, the operator $\curlyG$ itself is continuous \revisionTwo{(recall that we operate under the assumptions of \cref{th:convergence})}. Thus, it remains to show that it maps into the set $\curlyX$, i.e., that the image is non-negative and satisfies the mass constraint. We proceed by showing that $\tilde\theta$, the image of the operator, is non-negative, arguing by contradiction.

	Suppose there are certain values $i\in\curlyI$ such that $\tilde\theta\i <0$. Without loss of generality we assume that the corresponding cells $C\i$ form a single contiguous cluster. Therefore, we may assume that the negative values lie on a cluster of the form $j \leq i \leq k$, for some values $1 \leq j \leq k \leq 2M$.

	Summing \cref{eq:implicitrelation} over the pathological range and evaluating the flux terms as defined in \eqref{eq:implicitrelationflux} yields
	\begin{align} \label{eq:implicitrelationsummed}
		\revisionTwo{\sum_{i=j}^{k} \prt{\tilde\theta\i - \rho\i\n}\frac{\Dx}{\Dt} = - \tilde \theta\k \pos{u\kh} - \tilde \theta\kp \neg{u\kh} + \tilde \theta\jm \pos{u\jmh} + \tilde \theta\j \neg{u\jmh}.}
	\end{align}
	The left hand side is strictly negative, whereas all the right hand side terms are non-negative, yielding the desired contradiction.

	We stress that this argument applies also if several clusters of this type were to exist and if the cluster is degenerate, i.e., $j=k$.

	Having shown the non-negativity of the image, we note
	\begin{align} \label{eq:conservation_of_mass_in_fixedpt}
		\sum_{i=1}^{2M} \tilde\theta\i\Dx
		= \sum_{i=1}^{2M} \prt*{\rho\i\n - \frac{\Dt}{\Dx} \prt{\tilde F\ih - \tilde F\imh}}\Dx
		= \psnorm{1}{\Omega}{\rho\h(t\n)}
		= \psnorm{1}{\Omega}{\rho_0},
	\end{align}
	using \cref{eq:implicitrelation} and no-flux condition, $\tilde F\ih = \tilde F\imh=0$. As a consequence, $\curlyG$ maps the set $\curlyX$ into itself. Invoking this property in conjunction with the continuity of $\curlyG$, we may apply Brouwer's fixed-point theorem to infer the existence of a vector $\rho\np$ satisfying $\rho\np = \curlyG\prt{\rho\np}$. It is readily verified that the fixed point satisfies scheme \eqref{eq:discrete}, and that the conservation of mass follows from the same argument as in \cref{eq:conservation_of_mass_in_fixedpt}.
\end{proof}

Before proceeding with the analysis, we recall the aforementioned energy-dissipation property. As discussed in \cite{B.C.H2020}, the variational structure of \cref{eq:continuous} persists unconditionally in the discretised problem, thanks to the scheme, for a discrete analogue of the energy \eqref{eq:continuousEnergy}:
\begin{equation}
	\label{eq:discreteenergy}
	E_\Delta(\set{\rho\i\n}\i)=\sum_{i=1}^{2M}\prt*{ H(\rho\i^n)+ V\i\rho\i^n+\frac{1}{2}\sum_{j=1}^{2M}W_{i-j}\rho\i^n\rho_j^n\Dx}\Dx.
\end{equation}
The variation of this free energy over a single time step can be computed directly:
\begin{align}
	E_\Delta(\rho\np)-E_\Delta(\rho\n)
	 & = \sum_{i=1}^{2M}\left( H(\rho\i\np)+ V\i\rho\i\np+\frac{1}{2}\sum_{k=1}^{2M}\Wik\rho\i\np\rho\k\np\Dx \right)\Dx                 \\
	 & \quad -\sum_{i=1}^{2M}\left( H\prt{\rho\i\n}+ V\i\rho\i\n+\frac{1}{2}\sum_{k=1}^{2M}\Wik\rho\i\n\rho\k\n \Dx \right)\Dx           \\
	 & \leq \revisionTwo{\sum_{i=1}^{2M}\prt*{\prt*{H'(\rho\i\np) + V_i + \sum_{k=1}^{2M} \Wik \rho\k\nss\Dx} (\rho\i\np-\rho\i\n)}\Dx,}
\end{align}
having used the definition of $\rho\nss=(\rho\n + \rho\np)/2$ and the convexity of $H$. Substituting the scheme, we obtain
\begin{align}
	E_\Delta (\rho\np)-E_\Delta (\rho\n)
	\leq \revisionTwo{- \Dt \sum_{i=1}^{2M} \xi\i\np \frac{F\ih\np - F\imh\np}{\Dx}\Dx
		= - \Dt \sum_{i=1}^{2M-1} u\ih\np F\ih\np \Dx,}
\end{align}
having used the definition of the discrete entropy variables, $\xi\i\np$, the velocities $u\ih\np$, and a discrete integration by parts (see \cref{th:summationbyparts} in the Appendix). A straightforward simplification then yields
\begin{align}
	E_\Delta (\rho\np) - E_\Delta (\rho\n)
	 & \leq -\Dt\sum_{i=1}^{2M-1} \min(\rho\i\np,\rho\ip\np)\abs*{u\ih\np}^2
	\Dx                                                                                   \\
	 & = -\Dt\sum_{i=1}^{2M-1} \min(\rho\i\np,\rho\ip\np)\abs*{\ddx\xi\ih\n}^2\Dx \leq 0,
\end{align}
which is a discrete version of \cref{eq:energydissipation}.

\subsection{Discrete Gradient Estimate}

We now turn to the obtention of a discrete gradient estimate in the spirit of \cref{eq:flowinterchange_continuous_formal}. A crucial step in this endeavour is the construction of an auxiliary functional whose dissipation along solutions of \cref{eq:continuous} can be related to an $\Ltwo(\QT)$-bound on $\partial_x H'(\rho)$.

\begin{definition} [Auxiliary functional] \label{def:entropyconjugate}
	Let $H(\rho)$ be a convex density of internal energy \revisionTwo{such that $H''(s)>0$ for $s>0$, and $s^{-1}H''(s) \in L^1_{\mathrm{loc}}([0,\infty))$}. We define its associated auxiliary functional, $K(\rho)$, as any convex function which is bounded below by a constant $\cK$ and satisfies $\rho K''(\rho)=H''(\rho)$ for $\rho>0$.
\end{definition}

We remark that the convexity of $K$ follows from that of $H$. Furthermore, a choice which is bounded below can always be found by adding a linear function to a second primitive of $\rho^{-1}H''$, since convex functions lie above their tangents.

This associated functional can be used to define an average which generalises that of \cite{C.F.S2020}. It has also been used, in a more specific form, in the case of the Boltzmann entropy \cite{P.Z2018, C.C1970}.

\begin{lemma} [Generalised entropic average] \label{th:entropymean}
	Given a convex function, $H$, and its associated auxiliary functional, $K$, we define the \textit{generalised entropic average} of two positive real numbers $0 \leq x \leq y$ by
	\begin{align}
		\Emean{x}{y} \coloneqq \frac{H'(y)-H'(x)}{K'(y)-K'(x)},
	\end{align}
	whenever $x\neq y$, and by $\Emean{x}{y}=x$ otherwise. For any $0\leq x\leq y$, it holds
	\begin{align} \label{eq:entropymean}
		x \leq \Emean{x}{y} \leq y.
	\end{align}
\end{lemma}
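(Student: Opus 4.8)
## Proof Proposal

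The plan is to establish the two inequalities in \eqref{eq:entropymean} by interpreting $\Emean{x}{y}$ as a weighted mean value of the derivative $H'$ relative to the "distorted" variable $K'$. The key structural fact is the defining relation $H''(s) = s K''(s)$, which says that $\d H'(s) = s\, \d K'(s)$, so that formally $H'(y) - H'(x) = \int_x^y s\, K''(s)\, \d s$ while $K'(y) - K'(x) = \int_x^y K''(s)\, \d s$. Since $H$ is strictly convex on $(0,\infty)$, we have $H''(s) > 0$, hence $K''(s) = H''(s)/s > 0$ for $s > 0$; in particular $K'$ is strictly increasing on $(0,\infty)$, so the denominator $K'(y) - K'(x)$ is strictly positive whenever $0 < x < y$, and the ratio is well-defined. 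The case $x = 0 < y$ needs a small separate remark: the assumption $s^{-1}H''(s) \in L^1_{\mathrm{loc}}([0,\infty))$ guarantees that $K''$ is integrable near $0$, so $K'(0) \coloneqq \lim_{s\to 0^+} K'(s)$ exists (finite), and likewise $H'(0)$ is finite since $H \in C^2([0,\infty))$; thus $\Emean{0}{y}$ is a genuine finite quantity and the integral representations extend to $x = 0$.

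First I would dispose of the trivial case $x = y$, where $\Emean{x}{y} = x$ by definition and \eqref{eq:entropymean} holds with equalities. Then, for $0 \le x < y$, I would write
\begin{align}
\Emean{x}{y} = \frac{\int_x^y s\, K''(s)\, \d s}{\int_x^y K''(s)\, \d s},
\end{align}
which is precisely a weighted average of the identity function $s \mapsto s$ over the interval $[x,y]$, with respect to the positive measure $K''(s)\, \d s$. Since $x \le s \le y$ on the domain of integration and the weight $K''(s)\, \d s$ is a positive measure of positive total mass, the average lies between the minimum and maximum of the integrand, giving $x \le \Emean{x}{y} \le y$ immediately. This is essentially the integral mean value theorem for weighted averages.

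The main obstacle — really the only delicate point — is justifying the integral representation itself, i.e. verifying that $H'(y) - H'(x) = \int_x^y s\, K''(s)\, \d s$ and $K'(y) - K'(x) = \int_x^y K''(s)\, \d s$ hold rigorously, including at the left endpoint $x = 0$. On $(0,\infty)$ this is the fundamental theorem of calculus applied to $H', K' \in C^1$, combined with $H''(s) = sK''(s)$; extending to $x = 0$ requires the local integrability hypothesis on $s^{-1}H''(s) = K''(s)$ to pass to the limit via dominated (or monotone) convergence, using that $K$ can be taken as a second primitive of $s^{-1}H''$ plus a linear term (as noted after \cref{def:entropyconjugate}). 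Once these representations are in hand, the inequality is purely the statement that a weighted average of $s$ over $[x,y]$ lies in $[x,y]$, which needs no further argument. I would also note for completeness that strict inequalities $x < \Emean{x}{y} < y$ hold when $x < y$, since $K''>0$ makes the weight nondegenerate on any subinterval, though only the non-strict version is claimed in the statement.
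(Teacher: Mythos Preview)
Your proposal is correct and essentially identical to the paper's proof: the paper bounds $K'(y)-K'(x)=\int_x^y \tfrac{H''(s)}{s}\,\d s$ above and below by $\tfrac{1}{x}\int_x^y H''(s)\,\d s$ and $\tfrac{1}{y}\int_x^y H''(s)\,\d s$, which is exactly your weighted-average inequality $x\int_x^y K''\le\int_x^y sK''\le y\int_x^y K''$ after substituting $K''(s)=H''(s)/s$. The only difference is presentational---you phrase it as a weighted mean of the identity, the paper as a direct integral comparison---and both handle the endpoint $x=0$ via the $L^1_{\mathrm{loc}}$ assumption.
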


\begin{proof}
	Without loss of generality, let $0 < x < y$. Using the definition of the auxiliary functional we find
	\begin{align}
		K'(y) - K'(x)
		= \int_x^y \frac{H''(s)}{s} \ds
		< \frac{1}{x} \int_x^y H''(s)\ds
		= \frac{1}{x} \prt{H'(y) - H'(x)}.
	\end{align}
	Upon rearranging, we deduce $x < \Emean{x}{y}$. Similarly,
	\begin{align}
		K'(y) - K'(x)
		= \int_x^y \frac{H''(s)}{s} \ds
		> \frac{1}{y} \int_x^y H''(s)\ds
		= \frac{1}{y} \prt{H'(y) - H'(x)},
	\end{align}
	which yields $y > \Emean{x}{y}$. \revisionTwo{The case $x=y$ is trivially true, and the case of $x=0$ nevertheless holds because of the assumption that $s^{-1}H''(s) \in L^1_{\mathrm{loc}}([0,\infty))$.}
\end{proof}

Before the gradient estimate, we show that the potential terms in the velocity can be controlled.

\begin{lemma} [$\Linf$-estimates of $\ddx V\h$ and $\ddx(W\conv\rho\nss)\h$] \label{th:potentialderivative}
	Given any solution, $\set{\rho\i\n}\i\n$, to scheme \eqref{eq:discrete}, there holds
	\begin{align}
		\sum_{i=1}^{2M-1} \prt*{\abs{\ddx V\ih}^2 + \abs{\ddx(W\conv\rho\nss)\ih}^2}\Dx \leq 2L \prt{\cVone}^2,
	\end{align}
	where $\cVone\coloneqq\psnorm{\infty}{\Omega}{V'} + \psnorm{\infty}{\Omega}{W'} \psnorm{1}{\Omega}{\rho_0}$, \revisionOne{and $2L$ is the length of the spatial domain where \cref{eq:continuous} is posed.}
\end{lemma}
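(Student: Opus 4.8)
The plan is to first establish the pointwise estimates $\abs{\ddx V\ih}\le\psnorm{\infty}{\Omega}{V'}$ and $\abs{\ddx(W\conv\rho\nss)\ih}\le\psnorm{\infty}{\Omega}{W'}\,\psnorm{1}{\Omega}{\rho_0}$ for every interface index $i\in\set{1,\dots,2M-1}$. Adding these and using the elementary inequality $a^2+b^2\le(a+b)^2$ for $a,b\ge 0$ gives $\abs{\ddx V\ih}^2+\abs{\ddx(W\conv\rho\nss)\ih}^2\le(\cVone)^2$. Summing over the $2M-1$ dual cells, each of length $\Dx=L/M$, then produces $(2M-1)\Dx\,(\cVone)^2\le 2L\,(\cVone)^2$, which is exactly the asserted bound. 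Hence the lemma reduces to the two discrete $\Linf$-estimates, which are discrete analogues of $\abs{\partial_x V}\le\sup\abs{V'}$ and of Young's convolution inequality, respectively.

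For the drift term I would unfold the cell averages, writing $\ddx V\ih=\Dx^{-2}\prt*{\int_{C\ip}V(s)\ds-\int_{C\i}V(s)\ds}$. Translating the first integral onto $C\i$ by the substitution $s\mapsto s+\Dx$ and invoking the fundamental theorem of calculus yields $\ddx V\ih=\Dx^{-2}\int_{C\i}\int_{0}^{\Dx}V'(s+\tau)\,\dd\tau\,\ds$; since the integration domain has measure $\Dx^{2}$ and $\abs{V'}$ is bounded there by $\psnorm{\infty}{\Omega}{V'}$ — this is the step where the restriction $i\le 2M-1$ is used, so that the shifted arguments stay inside $\Omega$ — we obtain $\abs{\ddx V\ih}\le\psnorm{\infty}{\Omega}{V'}$.

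For the interaction term, expanding the discrete convolution gives $\ddx(W\conv\rho\nss)\ih=\sum_{j=1}^{2M}\prt{W_{i+1-j}-W_{i-j}}\rho_j\nss$. The same translate-and-differentiate manipulation applied to $W_{i+1-j}-W_{i-j}=\Dx^{-1}\int_{C\j}\prt{W(x\i-s+\Dx)-W(x\i-s)}\ds$ gives $\abs{W_{i+1-j}-W_{i-j}}\le\Dx\,\psnorm{\infty}{\Omega}{W'}$. Since $\rho_j\nss=(\rho_j\n+\rho_j\np)/2\ge 0$ by the non-negativity of the scheme guaranteed by \cref{thm:existence}, and $\sum_{j=1}^{2M}\rho_j\nss\Dx=\psnorm{1}{\Omega}{\rho_0}$ by conservation of mass, it follows that $\abs{\ddx(W\conv\rho\nss)\ih}\le\Dx\,\psnorm{\infty}{\Omega}{W'}\sum_{j}\rho_j\nss=\psnorm{\infty}{\Omega}{W'}\,\psnorm{1}{\Omega}{\rho_0}$.

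The argument is essentially bookkeeping, and I do not expect a genuine obstacle. The only points that need some care are checking that the shifted arguments in the two translations remain within the interval on which $V$ and $W$ are $C^2$ (this is precisely what forces the summation to stop at $i=2M-1$), and remembering to invoke the conserved mass for the averaged density $\rho\nss$ rather than for $\rho\n$ or $\rho\np$ individually.
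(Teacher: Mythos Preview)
Your proposal is correct and follows essentially the same route as the paper: both arguments establish the pointwise bounds $\abs{\ddx V\ih}\le\psnorm{\infty}{\Omega}{V'}$ and $\abs{\ddx(W\conv\rho\nss)\ih}\le\psnorm{\infty}{\Omega}{W'}\psnorm{1}{\Omega}{\rho_0}$ by translating one cell average onto the other and applying the mean value theorem (or, equivalently, the fundamental theorem of calculus), then use mass conservation for $\rho\nss$, and finally sum over the $2M-1$ interfaces and invoke $a^2+b^2\le(a+b)^2$ to combine the two contributions into $(\cVone)^2$. The paper squares each pointwise bound first and sums the two resulting inequalities separately before appealing to $a^2+b^2\le(a+b)^2$ at the very end, whereas you combine before summing, but this is purely cosmetic.
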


\begin{proof}
	The confinement potential term is readily controlled by the derivative of $V$, since
	\begin{align}
		\sum_{i=1}^{2M-1} \abs{\ddx V\ih}^2 \Dx
		 & = \sum_{i=1}^{2M-1} \abs*{ \frac{V\ip-V\i}{\Dx} }^2 \Dx                            \\
		 & = \sum_{i=1}^{2M-1} \abs*{ \dashint_{C\i} \frac{ V(\Dx+s) - V(s)}{\Dx} \ds }^2 \Dx \\
		 & \leq \sum_{i=1}^{2M-1} \psnorm{\infty}{\Omega}{V'}^2 \Dx                           \\
		 & \leq 2L\psnorm{\infty}{\Omega}{V'}^2,
	\end{align}
	\revisionOne{having observed $\sum_{i=1}^{2M-1}\Dx\leq\sum_{i=1}^{2M}\Dx = 2L$.} The interaction potential may be bounded in a similar fashion:
	\begin{align}
		\sum_{i=1}^{2M-1} \abs{\ddx(W\conv\rho\nss)\ih}^2 \Dx
		 & = \sum_{i=1}^{2M-1} \abs*{ \frac{ (W\conv\rho\nss)\ip - (W\conv\rho\nss)\i }{\Dx} }^2 \Dx                                   \\
		 & = \sum_{i=1}^{2M-1} \abs*{ \sum_{j=1}^{2M} \frac{ W_{i+1-j} - W_{i-j} }{\Dx} \rho\j\nss \Dx }^2 \Dx                         \\
		 & = \sum_{i=1}^{2M-1} \abs*{ \sum_{j=1}^{2M} \dashint_{C\j} \frac{ W(x\ip - s) - W(x\i - s) }{\Dx} \ds \rho\j\nss \Dx }^2 \Dx \\
		 & = \psnorm{\infty}{\Omega}{W'} \sum_{i=1}^{2M-1} \abs*{ \sum_{j=1}^{2M} \rho\j\nss \Dx }^2 \Dx                               \\
		 & \leq 2L\psnorm{\infty}{\Omega}{W'}^2 \psnorm{1}{\Omega}{\rho_0}^2,
	\end{align}
	which concludes the proof.
\end{proof}

We are now ready to reproduce the estimate \eqref{eq:flowinterchange_continuous_formal} in order to obtain an $\Ltwo$-bound on the discrete gradient of $H'$.

\begin{proposition} [Discrete $\Ltwo(\Omega)$-estimate of $ \partial_x H'(\rho)$] \label{th:discreteHgradientestimate}
	Let $\set{\rho\i\n}$ be a solution to scheme \eqref{eq:discrete}. Then it holds
	\begin{align}
		\sum_{i=1}^{2M} \prt{K(\rho\i\np) - K(\rho\i\n)}\frac{\Dx}{\Dt} + \prt{1-\alpha} \psnormp{2}{\Omega}{\ddx H'(\rho\np)}
		\leq \alpha^{-1}L \prt{\cVone}^2
	\end{align}
	for any $\alpha\in\prt{0,1}$, where $\cVone$ is the constant from \cref{th:potentialderivative}.
\end{proposition}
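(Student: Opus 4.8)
The plan is to mimic the continuous computation \eqref{eq:flowinterchange_continuous_formal} at the fully discrete level, with the generalised entropic average of \cref{th:entropymean} taking the place of the continuous identity $\rho\,\partial_x K'(\rho)=\partial_x H'(\rho)$. First I would test the scheme \eqref{eq:discrete} against $K'(\rho\i\np)$. Since $K$ is convex, and $K'$ is finite at $0$ (thanks to $s^{-1}H''(s)\in L^1_{\mathrm{loc}}$, so the argument applies even to vacuum cells), one has $K(\rho\i\np)-K(\rho\i\n)\le K'(\rho\i\np)(\rho\i\np-\rho\i\n)$; summing over $i$, multiplying by $\Dx/\Dt$, and inserting the scheme yields
\begin{align*}
	\sum_{i=1}^{2M}\prt{K(\rho\i\np)-K(\rho\i\n)}\frac{\Dx}{\Dt}\le -\sum_{i=1}^{2M}K'(\rho\i\np)\prt{F\ih\np-F\imh\np}.
\end{align*}
A discrete summation by parts (\cref{th:summationbyparts}), together with the no-flux values $F\n_{1/2}=F\n_{2M+\nhalf}=0$, rewrites the right-hand side as $\sum_{i=1}^{2M-1}\prt{K'(\rho\ip\np)-K'(\rho\i\np)}F\ih\np$.

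The core of the argument is to recognise this sum as (minus) a discrete Fisher information plus a controllable remainder. Introduce the interface value $\mu\ih\coloneqq\Emean{\min(\rho\i\np,\rho\ip\np)}{\max(\rho\i\np,\rho\ip\np)}$; by \cref{th:entropymean} it satisfies $\min(\rho\i\np,\rho\ip\np)\le\mu\ih\le\max(\rho\i\np,\rho\ip\np)$, and, by its definition, $\mu\ih\prt{K'(\rho\ip\np)-K'(\rho\i\np)}=H'(\rho\ip\np)-H'(\rho\i\np)=\Dx\,\ddx H'(\rho\np)\ih$. The key step is the interface inequality
\begin{align*}
	\prt{K'(\rho\ip\np)-K'(\rho\i\np)}F\ih\np\le\mu\ih\,u\ih\np\prt{K'(\rho\ip\np)-K'(\rho\i\np)},
\end{align*}
obtained by replacing, in the upwind flux $F\ih\np=\rho\i\np\pos{u\ih\np}+\rho\ip\np\neg{u\ih\np}$, the selected cell value by $\mu\ih$: one checks the sign combinations of $\rho\ip\np-\rho\i\np$ (equivalently of $K'(\rho\ip\np)-K'(\rho\i\np)$, since $K'$ is increasing) against that of $u\ih\np$, and in each case the monotonicity of $K'$ and the two-sided bound $\min\le\mu\ih\le\max$ make the swap favourable. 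Combining this with the splitting $u\ih\np=-\ddx H'(\rho\np)\ih+v\ih\np$ from \eqref{eq:velocity_split_entropic_potential}, the right-hand side of each term equals $-\Dx\,\abs{\ddx H'(\rho\np)\ih}^2+\Dx\,\ddx H'(\rho\np)\ih\,v\ih\np$, so that
\begin{align*}
	\sum_{i=1}^{2M-1}\prt{K'(\rho\ip\np)-K'(\rho\i\np)}F\ih\np\le -\psnormp{2}{\Omega}{\ddx H'(\rho\np)}+\sum_{i=1}^{2M-1}\ddx H'(\rho\np)\ih\,v\ih\np\,\Dx.
\end{align*}

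It remains to absorb the cross term. Young's inequality in the form $ab\le\alpha a^2+\tfrac{1}{4\alpha}b^2$ gives $\sum_{i}\ddx H'(\rho\np)\ih\,v\ih\np\,\Dx\le\alpha\,\psnormp{2}{\Omega}{\ddx H'(\rho\np)}+\tfrac{1}{4\alpha}\sum_{i}\abs{v\ih\np}^2\Dx$, and moving the first piece to the left produces the factor $(1-\alpha)$. For the last sum I would write $v\ih\np=-\ddx V\ih-\ddx(W\conv\rho\nss)\ih$, use $\abs{v\ih\np}^2\le 2(\abs{\ddx V\ih}^2+\abs{\ddx(W\conv\rho\nss)\ih}^2)$, and invoke \cref{th:potentialderivative} (whose bound depends on $\rho\nss$ only through $\psnorm{1}{\Omega}{\rho_0}$ by mass conservation, so the particular choice of $\rho\nss$ is immaterial) to obtain $\tfrac{1}{4\alpha}\sum_{i}\abs{v\ih\np}^2\Dx\le\alpha^{-1}L(\cVone)^2$. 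Assembling the three displays and rearranging yields the claim.

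The step I expect to be the main obstacle is the interface inequality linking the upwind flux to the entropic average, $\prt{K'(\rho\ip\np)-K'(\rho\i\np)}F\ih\np\le\mu\ih u\ih\np\prt{K'(\rho\ip\np)-K'(\rho\i\np)}$: the upwind scheme selects $\rho\i\np$ or $\rho\ip\np$ according to the sign of $u\ih\np$, whereas the weight that correctly reconstructs $\ddx H'(\rho\np)$ is the intermediate value $\mu\ih$, so one must verify that in every sign scenario exchanging the upwind value for $\mu\ih$ only improves the estimate — the matching sign of $K'(\rho\ip\np)-K'(\rho\i\np)$ and $\rho\ip\np-\rho\i\np$ together with the sharp two-sided bound of \cref{th:entropymean} are exactly what make this go through. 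A lesser but genuine technical point, already flagged among the hypotheses on $H$, is the need for $K'$ to be defined at $\rho=0$ for the convexity inequality and the entropic average to make sense on vacuum cells, which is why the assumption $s^{-1}H''(s)\in L^1_{\mathrm{loc}}([0,\infty))$ is imposed.
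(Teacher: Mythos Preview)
Your proposal is correct and follows essentially the same route as the paper: convexity of $K$ to test the scheme with $K'(\rho\i\np)$, summation by parts, the interface inequality that replaces the upwind value by the entropic average $\mu\ih$ (the paper writes this out via the decomposition $F\ih\np=\tilde\rho\ih u\ih\np+(\rho\ip\np-\tilde\rho\ih)\neg{u\ih\np}+(\rho\i\np-\tilde\rho\ih)\pos{u\ih\np}$ rather than a case-by-case sign check, but the content is identical), then Young's inequality on the cross term and \cref{th:potentialderivative}. Your handling of the constants also matches the paper's final bound, and you correctly identify both the interface inequality and the integrability of $s^{-1}H''(s)$ near zero as the two points requiring care.
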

\begin{proof}
	The convexity of $K$, together with scheme \eqref{eq:discrete}, yields
	\begin{align}
		\frac{K(\rho\i\np) - K(\rho\i\n)}{\Dt}
		\leq K'(\rho\i\np) \frac{\rho\i\np - \rho\i\n}{\Dt}
		= -K'(\rho\i\np) \ddx F\i\np,
	\end{align}
	which, upon summation over $i\in\curlyI$, gives
	\begin{align} \label{eq:summedestimatefirst}
		\sum_{i=1}^{2M} \frac{K(\rho\i\np) - K(\rho\i\n)}{\Dt}
		\leq - \sum_{i=1}^{2M} K'(\rho\i\np) \ddx F\i\np
		= \sum_{i=1}^{2M-1} \ddx K'(\rho\ih\np) F\ih\np.
	\end{align}
	The last equality follows from a discrete integration by parts (\cref{th:summationbyparts}) and the no-flux boundary conditions.

	Just as in the continuous estimate, cf. Eq. \eqref{eq:flowinterchange_continuous_formal}, we expand the flux term:
	\begin{align} \label{eq:fluxboundone}
		\begin{split}
			\ddx K'(\rho\ih\np) F\ih\np
			&= \ddx K'(\rho\ih\np) \brk*{ \rho\i\np\pos{u\ih\np} + \rho\ip\np\neg{u\ih\np}} \\
			&= \ddx K'(\rho\ih\np) { \tilde{\rho}\ih u\ih\np } \\
			&\quad + \ddx K'(\rho\ih\np) {\prt{\rho\ip\np-\tilde{\rho}\ih}\neg{u\ih\np} } \\
			&\quad +\ddx K'(\rho\ih\np) {\prt{\rho\i\np-\tilde{\rho}\ih}\pos{u\ih\np} }.
		\end{split}
	\end{align}
	Then, for any $\tilde{\rho}\ih\in\brk{\min\set{\rho\i\np,\rho\ip\np},\max\set{\rho\i\np,\rho\ip\np}}$, we have
	\begin{align}
		\ddx K'(\rho\ih\np) F\ih\np \leq\ddx K'(\rho\ih\np) \tilde{\rho}\ih u\ih\np,
	\end{align}
	using the convexity of the auxiliary functional, $K$. In particular, we may choose the generalised entropic average, $\tilde{\rho}\ih=\Emean{\rho\i\np}{\rho\ip\np}$, from \cref{th:entropymean}, and conclude
	\begin{align} \label{eq:fluxboundthree}
		\begin{split}
			\ddx K'(\rho\ih\np) F\ih\np
			&\leq \ddx K'(\rho\ih\np) \Emean{\rho\i\np}{\rho\ip\np} u\ih\np \\
			&= \ddx H'(\rho\ih\np)u\ih\np \\
			&= -\abs*{\ddx H'(\rho\ih\np)}^2+R\ih\np,
		\end{split}
	\end{align}
	using the definition of $\mu_{H}$. Here, the remainder is given by
	\begin{align}
		R\ih\np = - \ddx H'(\rho\ih\np)\prt*{\ddx V\ih + \ddx(W\conv\rho\nss)\ih}.
	\end{align}

	Substituting Eq. \eqref{eq:fluxboundthree} into Eq. \eqref{eq:summedestimatefirst}, we obtain
	\begin{align} \label{eq:boundwithremainder}
		\sum_{i=1}^{2M} \prt{K(\rho\i\np) - K(\rho\i\n)}\frac{\Dx}{\Dt}
		\leq -\sum_{i=1}^{2M-1} \abs{\ddx H'(\rho\ih\np)}^2\Dx +\sum_{i=1}^{2M-1}R\ih\np\Dx.
	\end{align}
	The remainder term can be estimated using the weighted Young's inequality:
	\begin{align} \label{eq:bounddVW}
		\begin{split}
			\sum_{i=1}^{2M-1} R\ih\np\Dx
			&= \sum_{i=1}^{2M-1} - \ddx H'(\rho\ih\np)\prt{\ddx V\ih + \ddx(W\conv\rho\nss)\ih} \Dx \\
			&\leq \sum_{i=1}^{2M-1}\prt*{ \alpha\abs{\ddx H'(\rho\ih\np)}^2 +\frac{1}{2\alpha}\abs{\ddx V\ih+\ddx(W\conv\rho\nss)\ih}^2}\Dx \\
			&\leq \alpha \sum_{i=1}^{2M-1} \abs{\ddx H'(\rho\ih\np)}^2 \Dx + \alpha^{-1} L \prt{\cVone}^2,
		\end{split}
	\end{align}
	using the bounds from \cref{th:potentialderivative}. Applying this to \cref{eq:boundwithremainder}, we finally obtain
	\begin{align}
		\sum_{i=1}^{2M} \prt{K(\rho\i\np) - K(\rho\i\n)}\frac{\Dx}{\Dt}
		\leq -\prt{1-\alpha} \sum_{i=1}^{2M-1} \abs{\ddx H'(\rho\ih\np)}^2\Dx +\alpha^{-1} L \prt{\cVone}^2,
	\end{align}
	which proves the statement.
\end{proof}

\begin{lemma}[Relaxed assumption on $H$]\label{th:discreteHgradientestimateRelaxed}
	\revisionTwo{
	The bound of Proposition \ref{th:discreteHgradientestimate},
	\begin{align}
		\sum_{i=1}^{2M} \prt{K(\rho\i\np) - K(\rho\i\n)}\frac{\Dx}{\Dt} + \prt{1-\alpha} \psnormp{2}{\Omega}{\ddx H'(\rho\np)}
		\leq \alpha^{-1}L \prt{\cVone}^2,
	\end{align}
	remains valid if the assumption $s^{-1} H''(s)\in L^1_{\mathrm{loc}}([0,\infty))$ is relaxed to
	$
		\lim_{\epsilon\to 0} \epsilon H''(\epsilon) = 0.
	$
	}
\end{lemma}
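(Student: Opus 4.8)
The plan is to transfer the proof of \cref{th:discreteHgradientestimate} to a regularised pair of functions that has no singularity at the origin, and then to recover the stated inequality by letting the regularisation parameter tend to zero. For $\epsilon>0$, set
\begin{align*}
	K_\epsilon''(s) \coloneqq \frac{H''(\max\{s,\epsilon\})}{\max\{s,\epsilon\}}, \qquad H_\epsilon''(s) \coloneqq s\, K_\epsilon''(s),
\end{align*}
so that $K_\epsilon''(s)=H''(s)/s$ and $H_\epsilon''(s)=H''(s)$ for $s\ge\epsilon$, while $K_\epsilon''(s)=H''(\epsilon)/\epsilon$ and $H_\epsilon''(s)=(s/\epsilon)H''(\epsilon)$ for $s<\epsilon$. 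Fixing the integration constants so that $K_\epsilon$ and $H_\epsilon$ agree with $K$ and $H$ on $[\epsilon,\infty)$, both become convex functions of class $C^1([0,\infty))$, with $K_\epsilon'$ and $H_\epsilon'$ finite at $0$ and $K_\epsilon$ bounded below; in particular $s^{-1}H_\epsilon''(s)=K_\epsilon''(s)$ is bounded near $0$, so \cref{th:entropymean} applies verbatim to the pair $(H_\epsilon,K_\epsilon)$ and the corresponding average $\mu_{H_\epsilon}(x,y)\coloneqq\prt{H_\epsilon'(y)-H_\epsilon'(x)}/\prt{K_\epsilon'(y)-K_\epsilon'(x)}$ lies between $x$ and $y$ for all $0\le x\le y$, including $x=0$.

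Next I would rerun the proof of \cref{th:discreteHgradientestimate} with $(H_\epsilon,K_\epsilon)$ in place of $(H,K)$; the only difference is that the entropic part of the scheme velocity, $h\ih\np=-\ddx H'(\rho\ih\np)$, still carries the original $H'$. Convexity of $K_\epsilon$, scheme \eqref{eq:discrete}, and the discrete integration by parts of \cref{th:summationbyparts} give $\sum_i\prt{K_\epsilon(\rho\i\np)-K_\epsilon(\rho\i\n)}\Dx/\Dt\le\sum_{i=1}^{2M-1}\ddx K_\epsilon'(\rho\ih\np)\,F\ih\np\,\Dx$; expanding the upwind flux and using the convexity of $K_\epsilon$ with $\tilde\rho\ih=\mu_{H_\epsilon}(\rho\i\np,\rho\ip\np)$, which lies between $\rho\i\np$ and $\rho\ip\np$, yields $\ddx K_\epsilon'(\rho\ih\np)\,F\ih\np\le\ddx H_\epsilon'(\rho\ih\np)\,u\ih\np$. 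Since $u\ih\np=-\ddx H'(\rho\ih\np)+v\ih\np$ with $v\ih\np=-\ddx V\ih-\ddx(W\conv\rho\nss)\ih$, summation gives
\begin{align*}
	\sum_{i=1}^{2M}\prt{K_\epsilon(\rho\i\np)-K_\epsilon(\rho\i\n)}\frac{\Dx}{\Dt}
	&\le -\sum_{i=1}^{2M-1}\ddx H_\epsilon'(\rho\ih\np)\,\ddx H'(\rho\ih\np)\,\Dx \\
	&\qquad +\sum_{i=1}^{2M-1}\ddx H_\epsilon'(\rho\ih\np)\,v\ih\np\,\Dx,
\end{align*}
the first sum on the right being nonnegative because $H'$ and $H_\epsilon'$ are nondecreasing, so both discrete gradients carry the sign of $\rho\ip\np-\rho\i\np$.

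Then I would pass to the limit $\epsilon\to0$ in this (finite) sum. For each fixed index, if $\rho\i\np$ and $\rho\i\n$ are positive, the relevant values $K_\epsilon(\rho\i\np)$, $H_\epsilon'(\rho\i\np)$, and so on, equal their unregularised counterparts once $\epsilon$ is small; if one of them vanishes, one checks, using $\lim_{\epsilon\to0}\epsilon H''(\epsilon)=0$ together with the continuity of $H'$ (hence of $H$ and of $K$) at $0$, that the corrections accrued over $[0,\epsilon]$ — of sizes $O\prt{\epsilon H'(\epsilon)}$, $O\prt{\epsilon^2 H''(\epsilon)}$, and $O\prt{\epsilon K'(\epsilon)}$ — vanish, so that $K_\epsilon(0)\to K(0)$ and $H_\epsilon'(0)\to H'(0)$. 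In either case $K_\epsilon(\rho\i\np)-K_\epsilon(\rho\i\n)\to K(\rho\i\np)-K(\rho\i\n)$ and $\ddx H_\epsilon'(\rho\ih\np)\to\ddx H'(\rho\ih\np)$, so the inequality above passes to the limit to give
\begin{align*}
	\sum_{i=1}^{2M}\prt{K(\rho\i\np)-K(\rho\i\n)}\frac{\Dx}{\Dt}
	\le -\psnormp{2}{\Omega}{\ddx H'(\rho\np)}+\sum_{i=1}^{2M-1}\ddx H'(\rho\ih\np)\,v\ih\np\,\Dx.
\end{align*}
The last sum is exactly $\sum_{i=1}^{2M-1}R\ih\np\,\Dx$ with $R\ih\np$ as in the proof of \cref{th:discreteHgradientestimate}, so estimate \eqref{eq:bounddVW} (weighted Young's inequality together with \cref{th:potentialderivative}) bounds it by $\alpha\,\psnormp{2}{\Omega}{\ddx H'(\rho\np)}+\alpha^{-1}L\prt{\cVone}^2$; rearranging yields the claim.

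The transcription carried out in the second step is a routine copy of \cref{th:discreteHgradientestimate}; the one new feature is that the entropic average is taken with respect to the regularised pair while the scheme keeps the original $H'$, which is why the cross term $\ddx H_\epsilon'(\rho\ih\np)\,\ddx H'(\rho\ih\np)$ appears in place of a perfect square and why one must finish by a limiting argument rather than concluding directly. Accordingly, I expect the only delicate point to be the third step — confirming that the contributions of $K_\epsilon$ and $H_\epsilon$ over the shrinking interval $[0,\epsilon]$ genuinely vanish — which is precisely, and exclusively, where the relaxed hypothesis $\lim_{\epsilon\to0}\epsilon H''(\epsilon)=0$ enters.
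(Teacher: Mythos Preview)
Your argument is correct and reaches the same conclusion, but the regularisation you use is genuinely different from the paper's. The paper keeps $H$ untouched and regularises $K$ by a \emph{shift}, defining $K_\epsilon$ through $(s+\epsilon)K_\epsilon''(s+\delta)=H''(s)$; this produces an entropic mean with an additive error $\epsilon$, and the heart of their proof is to show that the resulting extra term $\epsilon\,\ddx K_\epsilon'(\rho\ih\np+\delta)\,u\ih\np$ vanishes for fixed mesh as $\epsilon\to 0$, via a crude bound on $u$ (depending on $\Dx^{-2}$) followed by a L'H\^opital computation on $\epsilon K_\epsilon'(\epsilon)$. You instead regularise \emph{both} $H$ and $K$ by a cap at $\epsilon$, so that the pair $(H_\epsilon,K_\epsilon)$ satisfies the exact mean property of \cref{th:entropymean}; the price is a cross term $-\ddx H_\epsilon'\cdot\ddx H'$ in place of a perfect square, which you then recover in the limit $\epsilon\to0$ over the finite sum. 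Your route is arguably tidier in that no correction term ever appears in the flux estimate; the paper's route avoids introducing a second regularised function.

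Two small points on your limiting step. First, the implication ``continuity of $H'$ at $0$ $\Rightarrow$ continuity of $K$ at $0$'' is not immediate; it holds because, by Fubini, $\int_0^1 K'(t)\,\dd t = K'(1)-\int_0^1 H''(s)\,\dd s = K'(1)-H'(1)+H'(0)$ is finite, so $K(0^+)$ exists. Second, the vanishing of the correction you list as $O(\epsilon K'(\epsilon))$ is exactly the L'H\^opital step carried out in the paper's proof: if $\int_0^1 s^{-1}H''(s)\,\dd s$ diverges, then $\epsilon\int_\epsilon^1 s^{-1}H''(s)\,\dd s\to \lim_{\epsilon\to0}\epsilon H''(\epsilon)=0$. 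So the two proofs hinge on the same limit, just packaged differently.
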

\begin{proof}
	\revisionTwo{
	In order to avoid the possible singularity at zero, we define the regularised auxiliary quantity, $K_\epsilon$, by
	\begin{align}
		(s+\epsilon) K''_\epsilon(s+\delta) = H''(s).
	\end{align}
	This allows us to define a regularised entropic mean, which is shown to satisfy
	\begin{align}
		x \leq \frac{H'(y) - H'(x)}{K'_\epsilon(y+\delta) - K'_\epsilon(x+\delta)} - \epsilon \leq y
	\end{align}
	in the same vein as the proof of Lemma \ref{th:entropymean}. To reproduce the estimate of \cref{th:discreteHgradientestimate}, we note that $K_\epsilon'$ remains bounded. Studying the dissipation of $K_\epsilon$ along the discrete solution, we find
	\begin{align}
		\lll \frac{1}{\Dt} \sum_{i=1}^{2M} (K_\epsilon(\rho\i\np + \delta )-K_\epsilon(\rho\i\n+ \delta))                                                                                                      \\
		 & \leq \sum_{i=1}^{2M} \ddx K_\epsilon'(\rho\ih\np + \delta) F\ih\np                                                                                                                                  \\
		 & \leq \sum_{i=1}^{2M} \ddx K_\epsilon'(\rho\ih\np + \delta) \left(\frac{H'(\rho\ip\np) - H'(\rho\i\np)}{K'_\epsilon(\rho\ip\np+\delta) - K'_\epsilon(\rho\i\np +\delta)} - \epsilon \right) u\ih\np,
	\end{align}
	cf. \cref{eq:fluxboundthree}. The first term of the parenthesis is estimated as in Proposition \ref{th:discreteHgradientestimate}. The second, which arises from the regularisation, vanishes in the limit, as we proceed to show. To this end, let us note that
	\begin{align}
		\label{eq:remainder_relaxed_condition}
		\abs*{\epsilon \ddx K_\epsilon'(\rho\ih\np + \delta) u\ih\np}
		 & \leq \curlyC \epsilon \prt*{K_\epsilon'(\max_{1\leq i\leq 2N} \rho\i\np+ \delta) + K_\epsilon'(\delta)},
	\end{align}
	where
	\begin{align}
		\curlyC = 4 \frac{{H'(\psnorm{1}{\Omega}{\rho_h}\Dx^{-1}) + \psnorm{\infty}{\Omega}{V} + \psnorm{\infty}{\Omega}{W}\psnorm{1}{\Omega}{\rho_h} }}{\Dx^2}.
	\end{align}
	While this constant depends rather poorly on $\Dx$, we remark that, for our purpose, it is sufficient to bound the quantity in Eq. \eqref{eq:remainder_relaxed_condition} for fixed $\Dx, \Dt$, and pass to the limit $\epsilon\to 0$. The first term on the right-hand side of the inequality is bounded, and thus vanishes in the limit. To control the second term, we recall that
	\begin{align}
		\epsilon K_\epsilon'(s + \delta) = \epsilon K_\epsilon'(1 + \delta) + \epsilon \int_1^s\frac{H''(t)}{t+\epsilon}\mathrm{d} t,
	\end{align}
	where the first term on the right-hand side vanishes in the limit.
	Choosing $\delta = s = \epsilon/2$, we find
	\begin{align}
		\lim_{\epsilon \to 0} \epsilon \abs{K_\epsilon'(\epsilon)} = \lim_{\epsilon \to 0} \epsilon \int_{\epsilon/2}^1 \frac{H''(t)}{t+\epsilon}\mathrm{d}t.
	\end{align}
	Under the original assumption, $s^{-1}H''(s)\in L^1_{\mathrm{loc}}([0,\infty))$, the limit is zero. In the relaxed setting, where the integrand might not integrable, we nevertheless write
	\begin{align}
		\lim_{\epsilon \to 0} \epsilon \abs{K_\epsilon'(\epsilon)} = \lim_{\epsilon\to 0} \frac{\int_{\epsilon/2}^1 \frac{H''(t)}{t+\epsilon}\mathrm{d}t}{1/\epsilon} = \lim_{\epsilon \to 0 } \frac{-2\frac{H''(\epsilon/2)}{3\epsilon}}{-1/\epsilon^2} = \frac43 \lim_{\epsilon\to 0}\frac\epsilon2 H''(\epsilon/2) = 0
	\end{align}
	using L'H\^{o}pital's rule and the relaxed assumption, which completes the proof.
	}
\end{proof}

\begin{corollary} [Discrete $\Ltwo(\OmegaT)$-bound of $\partial_x H'(\rho)$] \label{th:aprioriL2boundgradH}
	Let $\set{\rho\i\n}$ be a solution to scheme \eqref{eq:discrete}. Then there exists a constant $\cdH>0$, independent of the mesh size, such that
	\begin{align}
		\psnormp{2}{\OmegaT}{\ddx H'(\rho\h)} \leq \cdH.
	\end{align}
\end{corollary}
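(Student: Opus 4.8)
The plan is to accumulate the one-step bound of \cref{th:discreteHgradientestimate} over all time levels. First I would fix $\alpha=\tfrac12$, apply the proposition at a generic level $n\in\curlyN$, multiply through by $\Dt$, and sum over $n\in\curlyN$. Since the increments of $K$ telescope,
\begin{align*}
	\sum_{n=0}^{N}\sum_{i=1}^{2M}\prt{K(\rho\i\np)-K(\rho\i\n)}\Dx
	=\sum_{i=1}^{2M}\prt{K(\rho\i^{N+1})-K(\rho\i^0)}\Dx,
\end{align*}
and the constant right-hand side sums to $\sum_{n=0}^{N}2L\prt{\cVone}^2\Dt=2LT\prt{\cVone}^2$ because $(N+1)\Dt=T$, I would be left with
\begin{align*}
	\tfrac12\sum_{n=0}^{N}\psnormp{2}{\Omega}{\ddx H'(\rho\np)}\Dt
	\leq 2LT\prt{\cVone}^2+\sum_{i=1}^{2M}\prt{K(\rho\i^0)-K(\rho\i^{N+1})}\Dx.
\end{align*}

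The next step is to bound the telescoped term by a mesh-independent quantity. For the terminal values I would invoke only that $K$ is bounded below by $\cK$ (\cref{def:entropyconjugate}) and that the solution is non-negative (\cref{thm:existence}): hence $K(\rho\i^{N+1})\geq\cK$, so $-\sum_i K(\rho\i^{N+1})\Dx\leq-2L\cK$, using $\sum_i\Dx=2L$. For the initial values, Jensen's inequality and $\rho_0\geq0$ give $0\leq\rho\i^0=\dashint_{C\i}\rho_0(x)\,\dx\leq\psnorm{\infty}{\Omega}{\rho_0}\eqqcolon R$ for every $i$; and since the hypothesis $s^{-1}H''(s)\in L^1_{\mathrm{loc}}([0,\infty))$ keeps $K'$ — and therefore $K$ — finite and continuous up to the origin, $K$ is bounded on the compact interval $[0,R]$, so $\sum_i K(\rho\i^0)\Dx\leq 2L\max_{[0,R]}K$. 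Substituting these estimates yields
\begin{align*}
	\psnormp{2}{\OmegaT}{\ddx H'(\rho\h)}
	&=\sum_{n=0}^{N}\psnormp{2}{\Omega}{\ddx H'(\rho\np)}\Dt\\
	&\leq 4LT\prt{\cVone}^2+4L\prt*{\max_{[0,R]}K-\cK}\eqqcolon\cdH,
\end{align*}
the first equality holding by definition of the piecewise constant interpolation and the discrete gradient; the resulting constant depends only on $L$, $T$, $\cVone$, $K$ and $R=\psnorm{\infty}{\Omega}{\rho_0}$, hence not on $M$ or $N$.

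I do not expect a deep obstacle: once \cref{th:discreteHgradientestimate} is available this is essentially a telescoping argument. The one step that genuinely uses the structural hypotheses is the uniform control of $\sum_i\prt{K(\rho\i^0)-K(\rho\i^{N+1})}\Dx$ — the construction of $K$ as a function bounded below takes care of the $\rho^{N+1}$ part, while the integrability of $s^{-1}H''$ takes care of the $\rho^0$ part, since it keeps $K$ finite at the origin and so prevents the cell averages of a merely $\Linf$ (neither $BV$ nor $\Hone$) datum from making $\sum_i K(\rho\i^0)\Dx$ diverge under refinement. The only bookkeeping to watch is matching the $N+1$ accumulated levels $\rho^{1},\dots,\rho^{N+1}$ to the $N+1$ time slabs of the interpolation. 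In the relaxed setting of \cref{th:discreteHgradientestimateRelaxed} I would rerun the identical accumulation with $K_\epsilon(\cdot+\delta)$ in place of $K$ and pass to the limit $\epsilon\to0$ at the end. As usual for such \emph{a priori} estimates aimed at compactness, the argument produces no convergence rate.
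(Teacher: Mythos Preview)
Your proposal is correct and follows essentially the same route as the paper: sum the one-step estimate of \cref{th:discreteHgradientestimate} in time, telescope the $K$-increments, and bound the endpoints using the lower bound $\cK$ on $K$ together with a control on $\sum_i K(\rho_i^0)\,\Dx$. The only cosmetic differences are that you fix $\alpha=\tfrac12$ up front and are more explicit than the paper about why $\psnorm{1}{\Omega}{K(\rho^0)}$ is mesh-independent (via $\rho_i^0\in[0,\psnorm{\infty}{\Omega}{\rho_0}]$ and continuity of $K$ on that interval); the index-matching issue you flag is present in the paper's proof as well and is indeed harmless bookkeeping.
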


\begin{proof}
	Performing discrete time integration of the result of \cref{th:discreteHgradientestimate} yields
	\begin{align}
		\sum_{i=1}^{2M} \prt{K(\rho\i^N) - K(\rho\i^0)}\Dx + \prt{1-\alpha}\psnormp{2}{\OmegaT}{\ddx H'(\rho\h)}
		\leq \alpha^{-1} L \prt{\cVone}^2 T.
	\end{align}
	Using $\mathcal{C}_K$, the lower bound on $K$ (see \cref{def:entropyconjugate}), we obtain
	\begin{align}
		\prt{1-\alpha} \psnormp{2}{\OmegaT}{\ddx H'(\rho\h)}
		\leq \alpha^{-1} L \prt{\cVone}^2 T +2L(-\mathcal{C}_K)^+ +\psnorm{1}{\Omega}{K(\rho^0)}.
	\end{align}
	Choosing $\alpha\in\prt{0,1}$ concludes the proof.
\end{proof}
 \subsection{Upper and Lower Bounds}

We now turn our attention to deriving bounds to control the approximate solution $\rho\h$ above and below in terms of the initial datum.

\begin{proposition}[Upper and lower bounds for $\rho\h$]\label{th:aprioriLinfboundrho}
	Let $\set{\rho\i\n}$ be a solution to scheme \eqref{eq:discrete}. Then:
	\begin{enumerate}[(i)]
		\item $\displaystyle \max_{i\in\curlyI} \rho\i\n \leq \prt*{\frac{1}{1 - \Dt{\curlyC}_V^{(2)}}}\n \max_{i\in\curlyI}\rho\i^0$, provided $\Dt {\curlyC}_V^{(2)} <1$;
		\item $\displaystyle \min_{i\in\curlyI} \rho\i\n \geq \prt*{\frac{1}{1 + \Dt{\curlyC}_V^{(2)}}}\n \min_{i\in\curlyI}\rho\i^0$;
	\end{enumerate}
	where $\cVtwo = \psnorm{\infty}{\Omega}{V''} + \psnorm{\infty}{\Omega}{W''} \psnorm{1}{\Omega}{\rho_0}$.
	In particular, $\rho\h \in \Linf(\QT)$, with some uniform bound $\cinf$.
\end{proposition}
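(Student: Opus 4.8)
The plan is to prove (i) and (ii) by a discrete maximum principle applied one time step at a time, locating the cell where $\rho\np$ is extremal and exploiting the monotone, upwind nature of the numerical flux. Since \cref{thm:existence} already furnishes a non-negative solution $\set{\rho\i\np}$ with conserved mass $\psnorm{1}{\Omega}{\rho_0}$ — so that $\rho\nss$, a convex combination of $\rho\n$ and $\rho\np$, also has $\Lone$-mass $\psnorm{1}{\Omega}{\rho_0}$ — it suffices to establish a pointwise recursion relating $\max_k\rho\k\np$ (resp. $\min_k\rho\k\np$) to the same quantity at time level $n$, and then iterate. I will also use $\max_k\rho\k^0\leq\psnorm{\infty}{\Omega}{\rho_0}$, which follows from \eqref{eq:cellaveragedata}.

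First I would fix $n\in\curlyN$ and pick $i\in\curlyI$ with $\rho\i\np=\max_k\rho\k\np$. Since $\rho\ip\np,\rho\im\np\leq\rho\i\np$ and $\pos{\cdot}\geq0$, $\neg{\cdot}\leq0$, I would bound $F\ih\np=\rho\i\np\pos{u\ih\np}+\rho\ip\np\neg{u\ih\np}$ from below and $F\imh\np=\rho\im\np\pos{u\imh\np}+\rho\i\np\neg{u\imh\np}$ from above, in each case by replacing the neighbouring density with $\rho\i\np$; this yields $F\ih\np-F\imh\np\geq\rho\i\np\bigl(u\ih\np-u\imh\np\bigr)$, and substituting into scheme \eqref{eq:discrete} gives $\rho\i\np\leq\rho\i\n-\Dt\,\rho\i\np\,\ddx u\i\np$. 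Splitting $\ddx u\i\np=\ddx h\i\np+\ddx v\i\np$ via \eqref{eq:velocity_split_entropic_potential}, at the maximising cell one has $\Dx^2\,\ddx h\i\np=-\bigl(H'(\rho\ip\np)-2H'(\rho\i\np)+H'(\rho\im\np)\bigr)\geq0$, because $H''>0$ makes $H'$ increasing so the parenthesis is non-positive there; as $\rho\i\np\geq0$, this contribution is non-positive and may be discarded, leaving $\rho\i\np\leq\rho\i\n-\Dt\,\rho\i\np\,\ddx v\i\np$.

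It remains to control $\ddx v\i\np$, which is $-\Dx^{-2}(V\ip-2V\i+V\im)$ minus $\Dx^{-2}$ times the corresponding second difference of $(W\conv\rho\nss)\i=\sum_j W_{i-j}\rho_j\nss\Dx$. A Taylor estimate for the cell averages of the $C^2$ functions $V$, $W$ gives $\abs{V\ip-2V\i+V\im}\leq\psnorm{\infty}{\Omega}{V''}\Dx^2$ and, using the conserved mass of $\rho\nss$, $\abs{(W\conv\rho\nss)\ip-2(W\conv\rho\nss)\i+(W\conv\rho\nss)\im}\leq\psnorm{\infty}{\Omega}{W''}\psnorm{1}{\Omega}{\rho_0}\Dx^2$; hence $\abs{\ddx v\i\np}\leq\cVtwo$. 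Therefore $\rho\i\np\leq\rho\i\n+\Dt\,\cVtwo\,\rho\i\np$, which rearranges to $\max_k\rho\k\np\leq(1-\Dt\,\cVtwo)^{-1}\max_k\rho\k\n$ whenever $\Dt\,\cVtwo<1$; iterating over $n$ proves (i). For (ii) I would run the same argument at a cell attaining $\min_k\rho\k\np$: there $\Dx^2\,\ddx h\i\np\leq0$, so the diffusive term is discarded with the opposite sign, and $\abs{\ddx v\i\np}\leq\cVtwo$ gives $\rho\i\np\geq\rho\i\n-\Dt\,\cVtwo\,\rho\i\np$, i.e. $\min_k\rho\k\np\geq(1+\Dt\,\cVtwo)^{-1}\min_k\rho\k\n$, with no restriction on $\Dt$ since $1+\Dt\,\cVtwo>0$. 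Finally, combining (i) with $T=(N+1)\Dt$ bounds $\psnorm{\infty}{\QT}{\rho\h}$ by $(1-\Dt\,\cVtwo)^{-(N+1)}\psnorm{\infty}{\Omega}{\rho_0}$, which stays bounded — converging to $e^{T\cVtwo}\psnorm{\infty}{\Omega}{\rho_0}$ — as the mesh is refined, giving the uniform constant $\cinf$.

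The hard part will be the very first step: checking that, at an extremal cell, the upwind fluxes genuinely rearrange so that the neighbouring densities drop out and only $\rho\i\np\,\ddx u\i\np$ survives, with the correct inequality sign. Once that is secured, the favourable sign of the discrete Laplacian of $H'(\rho)$ at the extremum and the elementary $C^2$ bounds on $V$ and $W$ are routine, and the geometric iteration over the $N+1$ time levels is immediate.
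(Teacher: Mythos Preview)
Your proof is correct and follows essentially the same strategy as the paper: locate the cell where $\rho\np$ is extremal, use the upwind structure to replace neighbouring densities by $\rho_{i_0}\np$ in the fluxes, drop the entropic contribution via the monotonicity of $H'$, bound $\abs{\ddx v\i\np}\leq\cVtwo$ using the $C^2$ regularity of $V,W$ and the conserved mass, and iterate the resulting geometric inequality. The only cosmetic difference is that the paper bounds $F_{i_0+1/2}\np$ and $F_{i_0-1/2}\np$ separately and discards the $h$-part at the flux level, whereas you combine them into $\rho\i\np\,\ddx u\i\np$ first and then discard the discrete Laplacian of $H'$; the two computations are equivalent.
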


\begin{proof}
	We will consider the solution to the scheme at two subsequent time instances, $t\n$ and $t\np$, and compare them. Let
	$i_0 \in \argmax\limits_{i\in \curlyI} \set{\rho\i\np}$,
	and note that $ \rho\i\np \leq \rho_{i_0}\np $ for all $i\in \curlyI$. The scheme yields
	\begin{align}
		\rho_{i_0}\np = \rho_{i_0}\n - \Dt \frac{F_{i_0+1/2}\np-F_{i_0-1/2}\np}{\Dx}.
	\end{align}
	Rewriting the numerical flux at the right cell interface, we observe
	\begin{align}
		F_{i_0+1/2}\np
		 & = \rho_{i_0}\np \pos{u_{i_0+1/2}\np} + \rho_{i_0+1}\np \neg{u_{i_0+1/2}\np}               \\
		 & = \rho_{i_0}\np u_{i_0+1/2}\np + \prt{\rho_{i_0+1}\np-\rho_{i_0}\np} \neg{u_{i_0+1/2}\np} \\
		 & \geq \rho_{i_0}\np u_{i_0+1/2}\np,
	\end{align}
	having used $\rho_{i_0+1}\np\leq\rho_{i_0}\np$. We recall the definition of scheme \eqref{eq:discrete} and split the velocity term $u\ih\np$ into the entropic part and the drift part as defined in \eqref{eq:velocity_split_entropic_potential}.
	This leads to
	\begin{align}
		F_{i_0+1/2}\np
		 & \geq \rho_{i_0}\np u_{i_0+1/2}\np                                                                           \\
		 & = \rho_{i_0}\np h_{i_0+1/2}\np + \rho_{i_0}\np v_{i_0+1/2}\np                                               \\
		 & = -\rho_{i_0}\np \frac{H'\prt{\rho_{i_0+1}\np} - H'\prt{\rho_{i_0}\np}}{\Dx} + \rho_{i_0}\np v_{i_0+1/2}\np \\
		 & \geq \rho_{i_0}\np v_{i_0+1/2}\np,
	\end{align}
	having, once again, used $\rho_{i_0+1}\np \leq \rho_{i_0}\np$, in conjunction with the monotonicity of $H'$. An analogous computation readily shows that $F_{i_0-1/2}\np \leq \rho_{i_0}\np v_{i_0-1/2}\np$. Finally, we note that
	\begin{align}
		\psnorm{\infty}{\Omega}{\rho\h\np}
		= \rho_{i_0}\np
		 & = \rho_{i_0}\n - \Dt \frac{F_{i_0+1/2}\np - F_{i_0-1/2}\np}{\Dx}                                  \\
		 & \leq \rho_{i_0}\n + \rho_{i_0}\np \frac{\Dt}{\Dx}\prt*{v_{i_0-1/2}\np - v_{i_0+1/2}\np}           \\
		 & \leq \psnorm{\infty}{\Omega}{\rho\h\n} + \Dt {\curlyC}_V^{(2)}\psnorm{\infty}{\Omega}{\rho\h\np},
	\end{align}
	having found $\psnorm{\infty}{\Omega}{\ddx v\i\np} \leq {\curlyC}_V^{(2)}$ through the same technique employed in the proof of \cref{th:potentialderivative}; we obtain
	\begin{align} \label{eq:linffirstinequality}
		\psnorm{\infty}{\Omega}{\rho\h\np}
		\leq \frac{1}{1-\Dt{\curlyC}_V^{(2)}} \psnorm{\infty}{\Omega}{\rho\h\n},
	\end{align}
	assuming $\Dt{\curlyC}_V^{(2)} < 1$. The repeated application of \cref{eq:linffirstinequality} yields
	\begin{align} \label{eq:linfsecondinequality}
		\psnorm{\infty}{\Omega}{\rho\h\n}
		\leq \frac{1}{\prt*{1-\Dt {\curlyC}_V^{(2)}}^{n}} \psnorm{\infty}{\Omega}{\rho\h^0},
	\end{align}
	which concludes the proof of the first bound.

	The second bound is proven in a parallel way. Let
	$i_0 \in \argmin_{i\in\curlyI} \set{\rho\i\np}$.
	We observe
	\begin{align}
		F_{i_0+1/2}\np
		 & = \rho_{i_0}\np \pos{u_{i_0+1/2}\np} + \rho_{i_0+1}\np \neg{u_{i_0+1/2}\np}               \\
		 & = \rho_{i_0}\np u_{i_0+1/2}\np + \prt{\rho_{i_0+1}\np-\rho_{i_0}\np} \neg{u_{i_0+1/2}\np} \\
		 & \leq \rho_{i_0}\np u_{i_0+1/2}\np,
	\end{align}
	and, through the splitting of the velocity, find
	\begin{align}
		F\np_{i_0+1/2} \leq \rho_{i_0}\np u_{i_0+1/2}\np \leq \rho_{i_0}\np v_{i_0+1/2}\np,
	\end{align}
	having used the convexity of $H$. Estimating $F\np_{i_0-1/2}$ in a similar fashion, we obtain
	\begin{align}
		\rho_{i_0}\np \geq \rho_{i_0}\n - \rho_{i_0}\np \, \frac{\Dt}{\Dx} \prt*{v_{i_0+1/2}\np - v_{i_0-1/2}\np} \geq \rho_{i_0}\n - \rho_{i_0}\np {\curlyC}_V^{(2)} \Dt,
	\end{align}
	with ${\curlyC}_V^{(2)}>0$ as above. Rearranging and iterating the inequality yields the statement and concludes the proof.
\end{proof}
 \section{Compactness} \label{sec:compactness}

Equipped with the \term{a priori} estimates obtained in the preceding section, we are now ready to establish the weak compactness of the velocity terms as well as the strong compactness of the density, $\rho_h$. To this end, we first prove the strong compactness of the entropic term, $H'\prt{\rho\h}$.

\begin{proposition} [$\Ltwo\prt{\QT}$-compactness of $H'\prt{\rho\h}$] \label{thm:compactness_entropic_part}
	Let $\prt{\rho\h}$ be a sequence of solutions to scheme \eqref{eq:discrete} for decreasing $h$. Then, the family $\prt{H'\prt{\rho\h}}$ converges strongly (up to a subsequence) in the $\Ltwo\prt{\QT}$ sense to some function $\chi\in\Ltwo\prt{\QT}$.
\end{proposition}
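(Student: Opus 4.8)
The plan is to verify the hypotheses of the Riesz--Fr\'echet--Kolmogorov compactness criterion for the family $\prt{H'\prt{\rho\h}}$ in $\Ltwo\prt{\QT}$: a uniform bound, together with a uniform control of its space translates and of its time translates. Since $\rho\h$ takes values in the fixed compact interval $\brk{0,\cinf}$ by \cref{th:aprioriLinfboundrho}, and $H\in\Ctwo\prt{\brk{0,\infty}}$, the function $H'$ is non-decreasing and Lipschitz on $\brk{0,\cinf}$, with constant $\curlyC_{H'}\coloneqq\max_{\brk{0,\cinf}}\abs{H''}$; in particular $\psnorm{\infty}{\QT}{H'\prt{\rho\h}}$ is bounded uniformly in $h$, so the family is bounded in $\Ltwo\prt{\QT}$.

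The control of space translates is immediate from \cref{th:aprioriL2boundgradH}, which bounds the discrete gradient $\ddx H'\prt{\rho\h}$ in $\Ltwo\prt{\QT}$ uniformly in $h$. Indeed, the classical estimate for piecewise constant finite-volume interpolations (see, e.g., \cite{C.F.S2020}) gives, for $\xi\in\R$ and $\Omega_\xi\coloneqq\set{x\in\Omega : x+\xi\in\Omega}$,
\[
	\int_0^T\int_{\Omega_\xi} \abs*{H'\prt{\rho\h}\prt{t,x+\xi} - H'\prt{\rho\h}\prt{t,x}}^2 \dx\dt \leq C\,\abs{\xi}\prt{\abs{\xi}+\Dx}\,\psnormp{2}{\QT}{\ddx H'\prt{\rho\h}} \leq C\cdH\abs{\xi}\prt{\abs{\xi}+\Dx},
\]
with $C$ an absolute constant, which tends to $0$ as $\xi\to0$, uniformly along any sequence $h\to0$ (eventually $\Dx\leq\abs{\xi}$).

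The time translates carry the bulk of the work and form the main obstacle. Fix $\tau>0$; for $t\in I\n$ let $m=m(t)$ be the index with $t+\tau\in I^{m}$. The crucial reduction exploits the monotonicity of $H'$ together with its Lipschitz bound on $\brk{0,\cinf}$, which yield the pointwise inequality $\abs{H'(a)-H'(b)}^2\leq\curlyC_{H'}\prt{H'(a)-H'(b)}(a-b)$ for $a,b\in\brk{0,\cinf}$, and hence
\[
	\int_0^{T-\tau}\int_\Omega \abs*{H'\prt{\rho\h}\prt{t+\tau,x}-H'\prt{\rho\h}\prt{t,x}}^2\dx\dt \leq \curlyC_{H'}\int_0^{T-\tau}\sum_{i=1}^{2M}\prt{H'(\rho\i^{m})-H'(\rho\i^{n})}\prt{\rho\i^{m}-\rho\i^{n}}\Dx\dt.
\]
I would then telescope the scheme \eqref{eq:discrete}, writing $\rho\i^{m}-\rho\i^{n} = -\Dt\sum_{k=n}^{m-1}\ddx F\i^{k+1}$, perform a discrete summation by parts in the spatial index (using the no-flux conditions and \cref{th:summationbyparts}) so as to transfer the discrete gradient onto $H'$, and apply the Cauchy--Schwarz inequality twice: once in the spatial index $i$, once in the temporal index $k$, over the at most $\ceil{\tau/\Dt}+1$ values of $k$ that occur. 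Feeding in the bounds $\psnormp{2}{\QT}{\ddx H'\prt{\rho\h}}\leq\cdH$ of \cref{th:aprioriL2boundgradH} and $\psnormp{2}{\QT}{F\h}\leq\curlyC_F$ --- the latter following from $\abs{F\ih\np}\leq\cinf\abs{u\ih\np}$ and the splitting $u\h=h\h+v\h$ from \eqref{eq:velocity_split_entropic_potential}, whose entropic part $h\h$ equals, up to a sign, the discrete gradient of $H'(\rho\h)$ already controlled by \cref{th:aprioriL2boundgradH}, and whose potential part $v\h$ is controlled in $\Ltwo\prt{\QT}$ by \cref{th:potentialderivative} --- the powers of $\Dt$ recombine to give
\[
	\int_0^{T-\tau}\sum_{i=1}^{2M}\prt{H'(\rho\i^{m})-H'(\rho\i^{n})}\prt{\rho\i^{m}-\rho\i^{n}}\Dx\dt \leq C\prt{\tau+\Dt},
\]
with $C$ depending only on $\cdH$ and $\curlyC_F$.

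Putting the three estimates together, $\prt{H'\prt{\rho\h}}$ is bounded in $\Ltwo\prt{\QT}$ and both its space and time translates vanish uniformly along any sequence $h\to0$ (recall $\Dt=\Dt(h)\to0$, so the right-hand sides above are eventually dominated by $2C\cdH\abs{\xi}^2$ and $2C\tau$ respectively). The Riesz--Fr\'echet--Kolmogorov theorem then yields a subsequence of $\prt{H'\prt{\rho\h}}$ converging strongly in $\Ltwo\prt{\QT}$ to some $\chi\in\Ltwo\prt{\QT}$. The delicate part is the time-translate estimate: one must track the index ranges in the telescoped sum carefully and organise the two Cauchy--Schwarz steps so that the $\Dt$ factors cancel exactly, leaving the clean bound $C(\tau+\Dt)$. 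The monotonicity trick for $H'$ is essential here, since it is what lets the summation by parts produce the already-controlled quantities $\ddx H'\prt{\rho\h}$ and $F\h$, rather than a spatial-$BV$ norm of the flux, which is not available.
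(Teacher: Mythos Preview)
Your proposal is correct and follows essentially the same route as the paper: uniform $\Linf$ bounds plus control of space and time translates, then Kolmogorov--Riesz--Fr\'echet; the time-translate argument (monotonicity/Lipschitz trick on $H'$, telescoping via the scheme, summation by parts, and control through $\psnorm{2}{\QT}{\ddx H'(\rho\h)}$ and an $\Ltwo$ bound on the velocity/flux) is exactly that of \cref{th:timeshift}, with your Cauchy--Schwarz steps playing the role of the paper's Young inequalities. The only cosmetic differences are that the paper uses a $\Dh/\Dl$ splitting to obtain the sharper bound $\mathcal C\tau$ rather than your $\mathcal C(\tau+\Dt)$, and proves the space translate directly (\cref{th:spaceshift}) rather than citing the standard piecewise-constant gradient estimate; neither affects the conclusion.
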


\cref{thm:compactness_entropic_part} can be obtained by invoking the well-known $\Lp$-compactness criterion of Kolmogorov-Riesz-Fr\'{e}chet \cite[Theorem 4.26]{Brezis2010}. This involves establishing a control on the time and space shifts of $H'\prt{\rho\h}$.

\begin{lemma} [Time translates of $H'(\rho\h)$] \label{th:timeshift}
	Let $\set{\rho\i\n}$ be a solution to scheme \eqref{eq:discrete}. Then, \revisionTwo{for any $\varepsilon>0$,} there exists a constant $\mathcal{C}>0$, \revisionTwo{uniform in $\Dx$ and in $\Dt \leq 1/{\curlyC}_V^{(2)} - \varepsilon$}, such that
	\begin{align}
		\int_{0}^{T-\tau} \int_{\Omega} \abs{H'\prt{\rho\h(t+\tau, x)}-H'\prt{\rho\h(t, x)}}^2 \dx\dt
		\leq \mathcal{C}\tau.
	\end{align}
\end{lemma}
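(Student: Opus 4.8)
The plan is to supply the time-translate estimate needed to run the Kolmogorov--Riesz--Fr\'{e}chet criterion on $H'(\rho_h)$ in \cref{thm:compactness_entropic_part} (the companion space-translate bound being \cref{th:spaceshift}). The starting point is that $\rho_h$ is bounded in $\Linf(\QT)$ by a constant $\cinf$ which is \emph{uniform} once $\Dt\leq 1/\cVtwo-\varepsilon$: indeed, \cref{th:aprioriLinfboundrho} then gives $\prt*{1-\Dt\cVtwo}^{-n}\leq e^{T/\varepsilon}$ for $n\Dt\leq T$. Since $H\in\Ctwo([0,\infty))$, the function $H'$ is non-decreasing and Lipschitz on $[0,\cinf]$ with constant $\cH\coloneqq\max_{[0,\cinf]}H''$, so that
\begin{align*}
	\prt*{H'(b)-H'(a)}^2 \leq \cH\,(b-a)\prt*{H'(b)-H'(a)} , \qquad a,b\in[0,\cinf] ,
\end{align*}
because the two factors on the right share their sign while $\abs{H'(b)-H'(a)}\leq\cH\abs{b-a}$. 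Applying this pointwise reduces the lemma to establishing
\begin{align*}
	I_\tau \coloneqq \int_0^{T-\tau}\!\!\int_\Omega \prt*{\rho_h(t+\tau)-\rho_h(t)}\prt*{H'(\rho_h(t+\tau))-H'(\rho_h(t))}\dx\dt \leq \mathcal C\tau .
\end{align*}

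To estimate $I_\tau$ I would invoke the scheme. Fixing $x\in C\i$ and writing $t\in I^n$, $t+\tau\in I^m$, scheme \eqref{eq:discrete} telescopes to $\rho\i^{m}-\rho\i^{n}=-\Dt\sum_{l=n}^{m-1}\ddx F\i^{l+1}$, that is, $\rho_h(t+\tau,x)-\rho_h(t,x)=-\Dt\sum_l\chi_l(t)\,\ddx F\i^{l+1}$ with $\chi_l$ the indicator of $[t^{l+1}-\tau,t^{l+1})$. Substituting this into $I_\tau$, summing over $i$, and applying the discrete summation by parts of \cref{th:summationbyparts} (boundary fluxes vanish) moves the difference quotient off $F$ and onto $H'(\rho)$:
\begin{align*}
	I_\tau = \Dt\sum_l\int_0^{T-\tau}\chi_l(t)\sum_{i=1}^{2M-1} F\ih^{l+1}\,\ddx\brk*{H'(\rho_h(t+\tau))-H'(\rho_h(t))}\ih\,\Dx\,\dt .
\end{align*}
This manipulation is the crux: bounding the telescoped update crudely by $\abs{\rho\i^{m}-\rho\i^{n}}\leq\Dt\sum_l\abs{\ddx F\i^{l+1}}$ and summing absolute values over $i$ would produce an uncontrollable factor $\Dx^{-1}$, whereas the identity above pairs $F$ against $\ddx H'(\rho)$, for which the uniform bound $\psnormp{2}{\OmegaT}{\ddx H'(\rho_h)}\leq\cdH$ of \cref{th:aprioriL2boundgradH} is at hand. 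One also needs $\psnorm{2}{\QT}{F_h}$ bounded uniformly, which I would obtain from the upwind form $\abs{F\ih^n}\leq\cinf\abs{u\ih^n}$, the splitting $u\ih^n=-\ddx H'(\rho^n)\ih-\ddx V\ih-\ddx(W\conv\rho^{**})\ih$, and \cref{th:aprioriL2boundgradH,th:potentialderivative}.

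It then remains to apply Cauchy--Schwarz in $i$ (pairing $F^{l+1}_h$ with $\ddx H'$), then Fubini and Cauchy--Schwarz in $t$, while keeping careful track of the powers of $\tau$ and $\Dt$. At a given $t$ the number of active indices $l$ (those with $t^{l+1}\in(t,t+\tau]$) is at most $\tau/\Dt+1$, hence $\leq 1$ when $\tau\leq\Dt$ and $\lesssim\tau/\Dt$ when $\tau>\Dt$; combining this count with the factor $\Dt$ coming from the scheme (which cancels the $\Dt^{-1}$ created by the count) and with the localisation $\sum_{l\text{ active at }t}\psnormp{2}{\Omega}{F^{l+1}_h}=\Dt^{-1}\!\int\psnormp{2}{\Omega}{F_h(s)}\ds$ over a time window of length $\lesssim\tau$ yields $I_\tau\leq\mathcal C\tau$ in both regimes, $\mathcal C$ depending only on $\cH$, $\cinf$, $\cdH$, $\cVone$, $L$, $T$ --- hence uniform in $\Dx$ and in $\Dt\leq 1/\cVtwo-\varepsilon$. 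Alternatively, when $\tau\leq\Dt$ one argues directly: each time slice straddles at most one grid point, so
\begin{align*}
	I_\tau \leq \tau\sum_{k}\sum_{i=1}^{2M}\prt*{\rho\i^k-\rho\i^{k-1}}\prt*{H'(\rho\i^k)-H'(\rho\i^{k-1})}\Dx ,
\end{align*}
and the scheme, summation by parts, Young's inequality, and \cref{th:aprioriL2boundgradH,th:potentialderivative} bound the right-hand side by $\tau\prt*{\tfrac\lambda2\psnormp{2}{\QT}{F_h}+\tfrac2\lambda\cdH}$ for any fixed $\lambda>0$.

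The one genuine obstacle is the spurious $\Dx^{-1}$ produced by a careless treatment of the telescoped scheme update; it is eliminated by the discrete summation by parts anchored to the a priori $\Ltwo$-gradient control of $H'(\rho_h)$. The remaining subtlety is forcing the final bound to be proportional to $\tau$ (rather than $\tau+\Dt$), which is exactly why the count of intervening time steps is examined separately in the regimes $\tau\leq\Dt$ and $\tau>\Dt$; beyond these two points the argument is routine.
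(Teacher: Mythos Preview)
Your proposal is correct and follows the same approach as the paper: Lipschitz reduction of $\abs{H'(b)-H'(a)}^2$ to the mixed product $(b-a)(H'(b)-H'(a))$, telescoping via the scheme, discrete summation by parts to pair $F\ih$ against $\ddx H'(\rho)\ih$, and closure through the $\Linf$ bound on $\rho$, the splitting $u=h+v$, and the estimates of \cref{th:aprioriL2boundgradH,th:potentialderivative}. The only difference is bookkeeping at the end: rather than your regime split $\tau\leq\Dt$ versus $\tau>\Dt$, the paper writes $\tau=K\Dt+\Dl=(K+1)\Dt-\Dh$ with $K=\lfloor\tau/\Dt\rfloor$, observes that on each $I^n$ the shifted density $\rho\h(t+\tau)$ takes the two values $\rho^{n+K}$ and $\rho^{n+K+1}$ on sub-intervals of length $\Dh$ and $\Dl$ respectively, and bounds the resulting two sums by $\mathcal C K\Dh$ and $\mathcal C(K+1)\Dl$, which add exactly to $\mathcal C\tau$ without a case distinction.
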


\begin{proof}
	Let $\tau>0$ be given. Then there exists a unique integer, $K\in\curlyN$, such that
	\begin{align}
		K \Dt \leq \tau \leq (K+1) \Dt.
	\end{align}
	We introduce the notation $\Dl, \Dh>0$ in order to write
	\begin{align}
		\tau = K \Dt + \Dl = (K+1)\Dt - \Dh,
	\end{align}
	and $\Dt = \Dl + \Dh$. As a consequence, we have $T-\tau \in I^{N-K}$.

	\begin{figure}[ht]
		\centering
		\tikzset{
	hatch distance/.store in=\hatchdistance,
	hatch distance=10pt,
	hatch thickness/.store in=\hatchthickness,
	hatch thickness=2pt
}

\begin{tikzpicture}[scale=0.95]
	\def\Linterval{1.3}
	\def\Ltau{4.45}
	\def\Lboundarylow{0.35}
	\def\Lboundaryhigh{0.55}
	\def\Lseparation{4.0}
	\def\Ldiscretisation{0.8}
	\def\Llabelx{0.95}
	\def\Llabely{-0.75}
	\def\Llabels{0.1}

\draw[white, thin, pattern=flexible hatch, hatch distance=5pt, hatch thickness=0.25pt, pattern color=black!50!white]
	({0*\Linterval},{-\Lboundarylow}) rectangle
	({\Ltau+0*\Linterval},{\Lboundarylow});

	\draw[white, thin, pattern=flexible hatch, hatch distance=5pt, hatch thickness=0.25pt, pattern color=black!50!white]
	({7*\Linterval},{-\Lboundarylow-\Lseparation}) rectangle
	({\Ltau+7*\Linterval},{\Lboundarylow-\Lseparation});

\draw[thick] ({0*\Linterval},{0}) -- ({7*\Linterval},{0});
	\draw[thick] ({\Ltau+0*\Linterval},{-\Lseparation}) -- ({\Ltau+7*\Linterval},{-\Lseparation});

\foreach \i in {0,...,7}{
			\draw[thick]
			({\i*\Linterval},{-\Lboundarylow}) --
			({\i*\Linterval},{\Lboundaryhigh});

			\draw[thick]
			({\Ltau+\i*\Linterval},{-\Lboundaryhigh-\Lseparation}) --
			({\Ltau+\i*\Linterval},{\Lboundarylow-\Lseparation});
		}

\foreach \i in {-3,...,3}{
			\draw[thick, dotted]
			({\Ltau+\i*\Linterval},{-\Lboundarylow}) --
			({\Ltau+\i*\Linterval},{\Lboundarylow});
		}
	\foreach \i in {4,...,10}{
			\draw[thick, dotted]
			({\i*\Linterval},{-\Lboundarylow-\Lseparation}) --
			({\i*\Linterval},{\Lboundarylow-\Lseparation});
		}

\node[above] at
	({0*\Linterval},{\Lboundaryhigh})
	{$t^{0}=0$};
	\node[below] at
	({\Ltau+0*\Linterval},{-\Lboundaryhigh-\Lseparation})
	{$t^{0}=0$};

	\node[above] at
	({1*\Linterval},{\Lboundaryhigh})
	{$t^{1}$};
	\node[below] at
	({\Ltau+1*\Linterval},{-\Lboundaryhigh-\Lseparation})
	{$t^{1}$};

	\node[above] at
	({2*\Linterval},{\Lboundaryhigh})
	{$\cdots$};
	\node[below] at
	({\Ltau+2*\Linterval},{-\Lboundaryhigh-\Lseparation})
	{$\cdots$};

	\node[above] at
	({7*\Linterval},{\Lboundaryhigh})
	{$t^{N}=T$};
	\node[below] at
	({\Ltau+7*\Linterval},{-\Lboundaryhigh-\Lseparation})
	{$t^{N}=T$};

	\node[above] at
	({6*\Linterval},{\Lboundaryhigh})
	{$t^{N-1}$};
	\node[below] at
	({\Ltau+6*\Linterval},{-\Lboundaryhigh-\Lseparation})
	{$t^{N-1}$};

\node[below] at
	({\Ltau + 0*\Linterval},{-\Lboundarylow})
	{$\tau$};
	\node[above] at
	({-\Ltau + \Ltau + 7*\Linterval},{+\Lboundarylow-\Lseparation})
	{$T-\tau$};

\draw[<-]
	({7*\Linterval+0.5},{0}) --
	({7*\Linterval+2.5},{0})
	node[fill=white] {$\rho\h\prt{t+\tau}$};

	\draw[<-]
	({\Ltau-0.5},{-\Lseparation}) --
	({\Ltau-2.5},{-\Lseparation})
	node[fill=white] {$\rho\h\prt{t}$};

\draw[<->]
	({\Linterval*5},{\Lboundaryhigh+\Ldiscretisation}) --
	({\Linterval*6},{\Lboundaryhigh+\Ldiscretisation})
	node[midway, above] {$\Dt$};

	\draw[<->]
	({\Linterval*3},{\Lboundaryhigh+\Ldiscretisation}) --
	({\Ltau+\Linterval*0},{\Lboundaryhigh+\Ldiscretisation})
	node[midway, above] {$\Dl$};

	\draw[<->]
	({\Ltau+\Linterval*0},{\Lboundaryhigh+\Ldiscretisation}) --
	({\Linterval*4},{\Lboundaryhigh+\Ldiscretisation})
	node[midway, above] {$\Dh$};

\draw[<-]
	({(\Ltau+\Linterval*8)/2},{-\Lseparation+\Lboundarylow-\Llabels}) --
	({(\Ltau+\Linterval*8)/2-\Llabelx},{\Llabely-\Lseparation/2});
	\draw[<-]
	({(\Ltau+\Linterval*8)/2},{-\Lboundarylow+\Llabels}) --
	({(\Ltau+\Linterval*8)/2-\Llabelx},{\Llabely-\Lseparation/2})
	node[left] {$\abs{H'\prt{\rho\i^{n+K}}-H'\prt{\rho\i^{n}}}^2\Dh$};

	\draw[<-]
	({(\Ltau+\Linterval*9)/2},{-\Lseparation+\Lboundarylow-\Llabels}) --
	({(\Ltau+\Linterval*9)/2+\Llabelx},{-\Llabely-\Lseparation/2});
	\draw[<-]
	({(\Ltau+\Linterval*9)/2},{-\Lboundarylow+\Llabels}) --
	({(\Ltau+\Linterval*9)/2+\Llabelx},{-\Llabely-\Lseparation/2})
	node[right] {$\abs{H'\prt{\rho\i^{n+K+1}}-H'\prt{\rho\i^{n}}}^2\Dl$};

	\draw[<-]
	({\Linterval*3},{-\Lboundaryhigh+\Llabels}) --
	({\Linterval*2.15)},{\Llabely-\Lseparation/10})
	node[below] {$K\Dt$};
	\draw[<-]
	({\Linterval*4},{-\Lboundaryhigh+\Llabels}) --
	({\Linterval*3.50)},{\Llabely-\Lseparation/10})
	node[below] {$\prt{K+1}\Dt$};
\end{tikzpicture} 		\caption{Split discretisation of \cref{eq:cmptimeshift}. The integrand becomes $\abs{H'\prt{\rho\i\K}-H'\prt{\rho\i\n}}^2$ over the $\Dh$ segments; in the $\Dl$ sections it becomes $\abs{H'\prt{\rho\i\Kp}-H'\prt{\rho\i\n}}^2$ instead.}
		\label{fig:dialintegral}
	\end{figure}

	The splitting induces the following space discretisation of the time translates:
	\begin{align} \label{eq:cmptimeshift}
		\int_{0}^{T-\tau} \int_{\Omega} \abs{H'\prt{\rho\h(t+\tau, x)}-H'\prt{\rho\h(t, x)}}^2 \dx\dt = I_{\Dh} + I_{\Dl},
	\end{align}
	where
	\begin{align}
		I_{\Dh} \coloneqq \sum_{n=0}^{N-K} \sum_{i=1}^{2M} \abs{H'\prt{\rho\i\K}-H'\prt{\rho\i\n}}^2 \Dx \Dh,
	\end{align}
	as well as
	\begin{align}
		I_{\Dl} \coloneqq \sum_{n=0}^{N-K-1} \sum_{i=1}^{2M} \abs{H'\prt{\rho\i\Kp}-H'\prt{\rho\i\n}}^2 \Dx \Dl,
	\end{align}
	see \cref{fig:dialintegral} for detail.

	The first double sum is either $0$ (when $\tau<\Dt$, which gives $K=0$) or, using the Lipschitz continuity of $H'$ with constant $\cH$,
	\begin{align}
		I_{\Dh} & =
		\sum_{n=0}^{N-K}
		\sum_{i=1}^{2M}
		\abs{H'\prt{\rho\i\K}-H'\prt{\rho\i\n}}^2
		\Dx\Dh                  \\
		        & \leq
		\curlyC_{H'}
		\sum_{n=0}^{N-K}
		\sum_{i=1}^{2M}
		\prt{H'\prt{\rho\i\K}-H'\prt{\rho\i\n}}
		\prt{\rho\i\K-\rho\i\n}
		\Dx\Dh                  \\
		        & =\curlyC_{H'}
		\sum_{n=0}^{N-K}
		\sum_{i=1}^{2M}
		\prt{H'\prt{\rho\i\K}-H'\prt{\rho\i\n}}
		\sum_{k=1}^{K}
		\frac{\rho\i\nk-\rho\i\nkm}{\Dt}
		\Dt\Dx\Dh               \\
		        & =
		-\curlyC_{H'}
		\sum_{n=0}^{N-K}
		\sum_{i=1}^{2M}
		\prt{H'\prt{\rho\i\K}-H'\prt{\rho\i\n}}
		\sum_{k=1}^{K}
		\ddx F\i\nk
		\Dt\Dx\Dh,
	\end{align}
	having used scheme \eqref{eq:discrete} in the last line. \revisionTwo{Note that $\curlyC_{H'}$ is not the same as $\cdH$, the $\Ltwo$ bound on the discrete gradient of $H'\prt{\rho\h}$ from  \cref{th:aprioriL2boundgradH}.} Upon summation by parts and rearranging the sums we obtain
	\begin{align}
		I_{\Dh}=
		\curlyC_{H'}
		\sum_{k=1}^{K}
		\sum_{n=0}^{N-K}
		\sum_{i=1}^{2M-1}
		\prt{\ddx H'\prt{\rho\ih\K}-\ddx H'\prt{\rho\ih\n}}
		F\ih\nk
		\Dx\Dt\Dh,
	\end{align}
	which can be controlled by substituting the flux and using the $\Linf$-bound from \cref{th:aprioriLinfboundrho}:
	\begin{align}
		\lll
		\sum_{k=1}^{K}
		\sum_{n=0}^{N-K}
		\sum_{i=1}^{2M-1}
		\prt*{\ddx H'\prt{\rho\ih\K}-\ddx H'\prt{\rho\ih\n}}
		\brk*{\rho\i\nk\pos{u\ih\nk} + \rho\ip\nk\neg{u\ih\nk}}
		\Dx\Dt\Dh
		\\
		 & \leq
		\curlyC_\infty
		\sum_{k=1}^{K}
		\sum_{n=0}^{N-K}
		\sum_{i=1}^{2M-1}
		\prt*{\abs*{\ddx H'\prt{\rho\ih\K}}+\abs*{\ddx H'\prt{\rho\ih\n}}}
		\abs*{u\ih\nk}
		\Dx\Dt\Dh.
	\end{align}
	Thus we may write
	\begin{align} \label{eq:leftmostrightmost}
		I_{\Dh} \leq \curlyC_{H'}
		\curlyC_\infty
		\sum_{k=1}^{K} \prt*{I_{\Dh}^{(1)}(k) + I_{\Dh}^{(2)}(k)} \Dh,
	\end{align}
	where we introduced the notation
	\begin{align}
		I_{\Dh}^{(1)}(k) \coloneqq \sum_{n=0}^{N-K}
		\sum_{i=1}^{2M-1} \abs*{\ddx H'\prt{\rho\ih\K}}
		\abs*{u\ih\nk} \Dx \Dt,
	\end{align}
	and
	\begin{align}
		I_{\Dh}^{(2)}(k) \coloneqq \sum_{n=0}^{N-K}
		\sum_{i=1}^{2M-1}
		\abs*{\ddx H'\prt{\rho\ih\n}}
		\abs*{u\ih\nk} \Dx \Dt.
	\end{align}{}

	We begin by bounding the term, $I_{\Dh}^{(1)}(k)$, by splitting the velocity as in \eqref{eq:velocity_split_entropic_potential}:
	\begin{align}
		I_{\Dh}^{(1)}(k)
		 & =
		\sum_{n=0}^{N-K} \sum_{i=1}^{2M-1} \abs{h\ih^{n+K}} \abs{u\ih\nk} \Dx\Dt \\
		 & \leq
		\sum_{n=0}^{N-K} \sum_{i=1}^{2M-1}
		\abs{h\ih^{n+K}}
		\left( \abs{h\ih\nk}
		+ \abs{v\ih^{n+k}} \right)
		\Dx\Dt
		\\
		 & \leq
		\sum_{n=0}^{N-K} \sum_{i=1}^{2M-1}
		\left( \abs{h\ih^{n+K}}^2
		+ \frac{1}{2}\abs{h\ih^{n+k}}^2
		+\frac{1}{2}\abs{v\ih^{n+k}}^2 \right) \Dx\Dt,
	\end{align}
	using Young's inequality on each term. Extending the sum over $n$ to the set $\curlyN$ permits the combination of the terms involving $H'$,
	\begin{align} \label{eq:I_Dh_1_k_is_bounded}
		\begin{split}
			I_{\Dh}^{(1)}(k)
			&\leq
			\sum_{n=0}^{N} \sum_{i=1}^{2M-1}
			\left( \frac{3}{2}\abs{h\ih\n}^2 + \frac{1}{2}\abs{v\ih\n}^2 \right)\Dx\Dt
			\leq
			\frac{3}{2}\curlyC_{\partial_x H'} + LT \curlyC_{V}^{(1)},
		\end{split}
	\end{align}
	where the last line follows using the bounds of \cref{th:potentialderivative}.

	Treating the second term of Eq. \eqref{eq:leftmostrightmost}, $I_{\Dh}^{(2)}(k)$, in an identical fashion, we may use its bound in conjunction with that of Eq. \eqref{eq:I_Dh_1_k_is_bounded} in Eq. \eqref{eq:leftmostrightmost} to obtain a bound on the total contribution:
	\begin{align} \label{eq:bound_on_I_Dh}
		\begin{split}
			I_{\Dh}
			&=
			\sum_{n=0}^{N-K}
			\sum_{i=1}^{2M}
			\abs{H'\prt{\rho\i\K}-H'\prt{\rho\i\n}}^2
			\Dx\Dh\\
			&\leq \curlyC_{H'}
			\curlyC_\infty
			\sum_{k=1}^{K} \prt*{I_{\Dh}^{(1)}(k) + I_{\Dh}^{(2)}(k)} \Dh\leq
			\mathcal{C}K\Dh,
		\end{split}
	\end{align}
	where
	$
		\mathcal{C}\coloneqq\curlyC_{H'}
		\curlyC_\infty
		\left(
		3 \curlyC_{\partial_x H'}
		+2 LT \curlyC_{V}^{(1)}
		\right).
	$

	Using a similar argument, we find
	\begin{align} \label{eq:bound_on_I_Dl}
		\begin{split}
			I_{\Dl} &=
			\sum_{n=0}^{N-K-1}
			\sum_{i=1}^{2M}
			\abs{H'\prt{\rho\i\Kp}-H'\prt{\rho\i\n}}^2
			\Dx\Dl
			\leq
			\mathcal{C}\prt{K+1}\Dl.
		\end{split}
	\end{align}
	Substituting \cref{eq:bound_on_I_Dh} and \cref{eq:bound_on_I_Dl} into \cref{eq:cmptimeshift}, we obtain
	\begin{align}
		 &
		\int_{0}^{T-\tau}
		\int_{\Omega}
		\abs{H'\prt{\rho\h(t+\tau, x)}-H'\prt{\rho\h(t, x)}}^2
		\dx\dt
		\leq
		\mathcal{C}
		\brk*{K\Dh + \prt{K+1}\Dl}
		=
		\mathcal{C}\tau,
	\end{align}
	where the last equality holds because $\Dt=\Dl+\Dh$ and $K\Dt+\Dl = \tau$.
\end{proof}

\begin{lemma} [Space translate of $H'(\rho\h)$] \label{th:spaceshift}
	Let $\set{\rho\i\n}$ be a solution to scheme \eqref{eq:discrete}. Then, \revisionTwo{for any $\varepsilon>0$,} there exists a constant $\mathcal{C}>0$, \revisionTwo{uniform in $\Dx$ and in $\Dt \leq 1/{\curlyC}_V^{(2)} - \varepsilon$}, such that
	\begin{align}
		\int_{0}^{T} \int_{-L}^{L-z}
		\abs{H'\prt{\rho\h(t, x+z)}-H'\prt{\rho\h(t, x)}}^2 \dx\dt
		\leq
		\mathcal{C} z.
	\end{align}
\end{lemma}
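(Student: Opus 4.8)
The plan is to observe that, in sharp contrast with the time-translate estimate of \cref{th:timeshift}, the space-translate estimate does not involve the scheme at all: it is a discrete Poincar\'e-type inequality that converts the uniform $\Ltwo(\QT)$-control on the discrete gradient of $H'(\rho\h)$ supplied by \cref{th:aprioriL2boundgradH} into a modulus of continuity in the space variable. I would retain the hypothesis $\Dt\cVtwo<1$ only so that \cref{th:aprioriLinfboundrho} gives $\rho\h\in\Linf(\QT)$ and hence $H'(\rho\h)\in\Ltwo(\QT)$ (as $H'\in C([0,\infty))$), which makes the left-hand side finite; the estimate itself will rely solely on the bound $\psnormp{2}{\QT}{\ddx H'(\rho\h)}\leq\cdH$, which is uniform in $\Dx$ and in $\Dt\leq1/\cVtwo-\varepsilon$.

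\emph{Step 1 (discretising the shift).} Mimicking the construction in the proof of \cref{th:timeshift}, with the roles of space and time exchanged, I would fix $z\in(0,2L)$ (for $z\geq2L$ the integration domain is empty) and let $J\in\set{0,\dots,2M-1}$ be the unique integer with $J\Dx\leq z\leq(J+1)\Dx$, so that $z=J\Dx+\Dl=(J+1)\Dx-\Dh$ with $\Dx=\Dl+\Dh$ and $\Dl,\Dh\geq0$. For $x\in C\i$ with $x+z\in\Omega$, the translated point $x+z$ lies in $C_{i+J}$ when $x$ runs over a sub-interval of $C\i$ of length $\Dh$, and in $C_{i+J+1}$ on the complementary sub-interval of length $\Dl$. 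Splitting the double integral accordingly (cf. \cref{fig:dialintegral}) gives
\begin{align}
	\int_{0}^{T}\int_{-L}^{L-z}\abs{H'\prt{\rho\h(t,x+z)}-H'\prt{\rho\h(t,x)}}^2\dx\dt=\tilde I_{\Dh}+\tilde I_{\Dl},
\end{align}
with $\tilde I_{\Dh}=\sum_{n=0}^{N}\sum_{i=1}^{2M-J}\abs{H'(\rho_{i+J}\n)-H'(\rho\i\n)}^2\,\Dh\,\Dt$ and $\tilde I_{\Dl}=\sum_{n=0}^{N}\sum_{i=1}^{2M-J-1}\abs{H'(\rho_{i+J+1}\n)-H'(\rho\i\n)}^2\,\Dl\,\Dt$.

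\emph{Step 2 (telescoping against the discrete gradient).} The definition of the discrete gradient yields the telescoping identity $H'(\rho_{i+J}\n)-H'(\rho\i\n)=\Dx\sum_{k=i}^{i+J-1}\ddx H'(\rho\kh\n)$. Squaring, applying the discrete Cauchy--Schwarz inequality, summing over $i$, and interchanging the order of summation (each interface index occurs in at most $J$ of the inner sums) gives, for each $n$,
\begin{align}
	\sum_{i=1}^{2M-J}\abs{H'(\rho_{i+J}\n)-H'(\rho\i\n)}^2\Dx\leq(J\Dx)^2\,\psnormp{2}{\Omega}{\ddx H'(\rho\n)},
\end{align}
and likewise with $J$ replaced by $J+1$. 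Multiplying by $\Dh\,\Dt/\Dx$ (resp. $\Dl\,\Dt/\Dx$), summing over $n$, and using $\psnormp{2}{\QT}{\ddx H'(\rho\h)}=\sum_{n}\psnormp{2}{\Omega}{\ddx H'(\rho\n)}\Dt\leq\cdH$ from \cref{th:aprioriL2boundgradH}, one obtains
\begin{align}
	\tilde I_{\Dh}\leq\Dh\,J^2\Dx\,\cdH,\qquad\tilde I_{\Dl}\leq\Dl\,(J+1)^2\Dx\,\cdH.
\end{align}

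\emph{Step 3 (from a quadratic to a linear modulus), and the main obstacle.} The telescoping naturally produces an $\mathcal{O}(z^2)$ modulus, exactly as one expects from a discrete $H^1$-in-space bound; the only genuinely delicate point is to turn this into the claimed linear rate, and here one exploits the boundedness of the domain. Factorising $\Dh\,J^2\Dx=(\Dh J)(J\Dx)$ and using $\Dh\leq\Dx$ and $J\Dx\leq z$ gives $\Dh\,J^2\Dx\leq z^2$; factorising $\Dl\,(J+1)^2\Dx=(\Dl(J+1))((J+1)\Dx)$ and using $\Dl(J+1)=J\Dl+\Dl\leq J\Dx+\Dl=z$ together with $(J+1)\Dx=z+\Dh\leq z+\Dx$ gives $\Dl\,(J+1)^2\Dx\leq z(z+\Dx)$. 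Since $z<2L$ and $\Dx=L/M\leq L$, both right-hand sides are bounded by a fixed multiple of $Lz$, so $\tilde I_{\Dh}+\tilde I_{\Dl}\leq\curlyC z$ with $\curlyC$ depending only on $L$ and $\cdH$, hence uniform in the mesh. Apart from this bookkeeping—and the routine verification of the cell-geometry of Step 1 near the boundary—there is no real difficulty: unlike in \cref{th:timeshift}, the scheme, summation by parts, and the flux structure never enter the argument.
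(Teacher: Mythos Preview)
Your proof is correct and establishes the claimed bound, but it takes a genuinely different route from the paper. The paper mirrors the structure of the time-shift proof: it invokes the Lipschitz continuity of $H'$ on the range of $\rho\h$ to write $\abs{H'(\rho\iK\n)-H'(\rho\i\n)}^2\leq\cH\,(\rho\iK\n-\rho\i\n)(H'(\rho\iK\n)-H'(\rho\i\n))$, telescopes only the second factor, bounds the first by $2\cinf$ using \cref{th:aprioriLinfboundrho}, and then applies Cauchy--Schwarz once in $(n,i)$ to obtain $\abs{I_{\Dh}}\leq\mathcal{C}\,K\Dh$ and $\abs{I_{\Dl}}\leq\mathcal{C}\,(K+1)\Dl$; the identity $K\Dh+(K+1)\Dl=z$ then gives the linear modulus exactly, without any appeal to the boundedness of $\Omega$.

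Your argument is the standard discrete Poincar\'e estimate: telescope, square via Cauchy--Schwarz, and count. It is more elementary in that it uses neither the Lipschitz bound on $H'$ nor the uniform $\Linf$-bound on $\rho\h$; the only input is $\psnormp{2}{\QT}{\ddx H'(\rho\h)}\leq\cdH$. The price is that you naturally obtain an $\mathcal{O}(z^2)$ modulus and must spend the diameter of $\Omega$ to downgrade it to $\mathcal{O}(z)$, giving a constant that scales like $L\cdH$ rather than the paper's $\cH\cinf\sqrt{2LT}\,\cdH^{1/2}$. For the application to Kolmogorov--Riesz--Fr\'echet compactness either bound suffices; indeed your quadratic modulus is already enough before the last step.
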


\begin{proof}
	Fix $z>0$, and suppose $L-z \in C_{2M-K}$.
	Repeating the argument of the proof of \cref{th:timeshift}, we see $K=\floor*{\frac{z}{\Dx}}$ and $K\Dx \leq z \leq (K+1)\Dx$. Once again we recover a partition, this time of the spatial interval, $\Dx = \Dl+\Dh$; we have $\Dl=z - K\Dx$ and $\Dh=\prt{K+1}\Dx - z$. This splitting yields the discretisation
	\begin{align} \label{eq:spaceshiftssums}
		\int_{0}^{T}
		\int_{-L}^{L-z}
		\abs{H'\prt{\rho\h(t, x+z)}-H'\prt{\rho\h(t, x)}}^2
		\dx\dt = I_{\Dh} + I_{\Dl},
	\end{align}
	where
	\begin{align}
		I_{\Dh} \coloneqq
		\sum_{n=0}^{N}
		\sum_{i=1}^{2M-K}
		\abs{H'\prt{\rho\iK\n}-H'\prt{\rho\i\n}}^2
		\Dh
		\Dt,
	\end{align}
	and
	\begin{align}
		I_{\Dl} \coloneqq \sum_{n=0}^{N}
		\sum_{i=1}^{2M-K-1}
		\abs{H'\prt{\rho\iKp\n}-H'\prt{\rho\i\n}}^2
		\Dl
		\Dt.
	\end{align}{}

	To bound the term $I_{\Dl}$ in \cref{eq:spaceshiftssums}, we use the Lipschitz continuity of $H'$:
	\begin{align}
		I_{\Dl}
		 & =\sum_{n=0}^{N}
		\sum_{i=1}^{2M-K}
		\abs{H'\prt{\rho\iK\n}-H'\prt{\rho\i\n}}^2
		\Dh\Dt
		\\
		 & \leq
		\curlyC_{H'}
		\sum_{n=0}^{N}
		\sum_{i=1}^{2M-K}
		\prt{\rho\iK\n-\rho\i\n}
		\prt{H'\prt{\rho\iK\n}-H'\prt{\rho\i\n}}
		\Dh\Dt
		\\
		 & =
		\curlyC_{H'}
		\sum_{n=0}^{N}
		\sum_{i=1}^{2M-K}
		\prt{\rho\iK\n-\rho\i\n}
		\sum_{k=1}^{K}
		\frac{H'\prt{\rho\ik\n}-H'\prt{\rho\ikm\n}}{\Dx}
		\Dx\Dh\Dt
		\\
		 & \leq
		\curlyC_{H'}
		\sum_{n=0}^{N}
		\sum_{i=1}^{2M-K}
		\sum_{k=1}^{K}
		\prt{\rho\iK\n-\rho\i\n}
		\ddx H'\prt{\rho\ikmh\n}
		\Dx\Dh\Dt.
	\end{align}
	This expression can be controlled in terms of the estimates from \cref{th:aprioriL2boundgradH,th:aprioriLinfboundrho}:
	\begin{align}
		\label{eq:space_shifts_Dl_bound}
		\begin{split}
			I_{\Dl}
			&\leq
			2\curlyC_{H'}
			\curlyC_\infty
			\sum_{n=0}^{N}
			\sum_{i=1}^{2M-K}
			\sum_{k=1}^{K}
			\abs{ \ddx H'\prt{\rho\ikmh\n} }
			\Dx\Dh\Dt
			\\
			&\leq
			2 \curlyC_{H'} \sqrt{2LT}
			\curlyC_\infty
			\psnorm{2}{\QT}{\ddx H'\prt{\rho\h}}
			K \Dh
			\leq
			\mathcal{C} K \Dh.
		\end{split}
	\end{align}

	The second term in \cref{eq:spaceshiftssums}, $I_{\Dh}$, is controlled similarly. In fact,
	\begin{align} \label{eq:space_shifts_Dh_bound}
		I_{\Dh}
		=
		\sum_{n=0}^{N}
		\sum_{i=1}^{2M-K-1}
		\abs{H'\prt{\rho\iKp\n}-H'\prt{\rho\i\n}}^2
		\Dl
		\Dt
		\leq
		\mathcal{C}
		\prt{K+1}\Dl.
	\end{align}
	The combination of \cref{eq:space_shifts_Dl_bound,eq:space_shifts_Dh_bound} provides bound
	\begin{align}
		\int_{0}^{T} \int_{-L}^{L-z}
		\abs{H'\prt{\rho\h(t, x+z)}-H'\prt{\rho\h(t, x)}}^2 \dx\dt
		\leq
		\mathcal{C} \brk{K\Dh + \prt{K+1}\Dl}
		= \mathcal{C} z,
	\end{align}
	using $\Dx=\Dl+\Dh$ and $K\Dx+\Dl=z$. This concludes the proof.
\end{proof}

We now proceed to show compactness.

\begin{proof} [Proof of \cref{thm:compactness_entropic_part}]
	Let $\prt{\rho\h}$ be the piecewise constant interpolations associated to a sequence of solutions to scheme \eqref{eq:discrete}, and consider the family $\prt{H'\prt{\rho\h}}$. The uniform $\Linf$-bound of $\rho\h$ implies that the family $\prt{H'\prt{\rho\h}}$ is also bounded in $\Linf(\QT)$. The uniform bounds in conjunction with the control of the space and time translates from \cref{th:timeshift,th:spaceshift} suffice to invoke the theorem of Kolmogorov-Riesz-Fr\'{e}chet \cite[Theorem 4.26]{Brezis2010}. Thus, the family $\prt{H'\prt{\rho\h}}$ converges strongly in in $\Ltwo\prt{\QT}$ (up to a subsequence) to a function $\chi\in\Ltwo\prt{\QT}$.
\end{proof}

It is easy to see that the compactness of $\rho\h$ follows directly from the strong compactness of $\prt{H'\prt{\rho\h}}_h$.

\begin{theorem} [$\Ltwo\prt{\QT}$-compactness of $\rho\h$] \label{thm:compactness}
	Let $\prt{\rho\h}_h$ be the piecewise constant interpolations associated to a sequence of solutions to scheme \eqref{eq:discrete}. Then, the family converges strongly (up to a subsequence) in the $\Ltwo\prt{\QT}$-sense to a function $\rho\in\Ltwo\prt{\QT}$.
\end{theorem}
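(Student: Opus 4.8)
The plan is to transfer the compactness of $H'(\rho\h)$ back to $\rho\h$ by inverting $H'$. Since $H\in C^2([0,\infty))$ with $H''(s)>0$ for $s>0$, the map $H'$ is continuous on $[0,\infty)$ and strictly increasing there (for $0\le a<b$ one has $H'(b)-H'(a)=\int_a^b H''(s)\ds>0$, the integrand being continuous and positive on $(0,b)$); hence $H'$ is a homeomorphism onto its image, and I denote its inverse by $G\coloneqq (H')^{-1}$, which is continuous on the interval $[H'(0),\,\sup H')$. Note that no difficulty arises at the left endpoint $s=0$: $H'(0)$ is finite, $G$ is continuous there even if $H''(0)=0$, and continuity (not uniform continuity) is all that is needed below.

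First I would invoke \cref{thm:compactness_entropic_part} to extract a subsequence (not relabelled) along which $H'(\rho\h)\to\chi$ strongly in $\Ltwo(\QT)$; passing to a further subsequence, the convergence also holds a.e. in $\QT$. By the uniform bound of \cref{th:aprioriLinfboundrho}, $0\le\rho\h\le\cinf$ a.e., so $H'(0)\le H'(\rho\h)\le H'(\cinf)$ a.e.; consequently the pointwise limit satisfies $\chi\in[H'(0),H'(\cinf)]$ a.e., which in particular places $\chi$ in the image of $H'$ almost everywhere, so that $G(\chi)$ is well defined a.e.

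Next, since $\rho\h = G(H'(\rho\h))$ by construction and $G$ is continuous, the a.e. convergence $H'(\rho\h)\to\chi$ yields $\rho\h\to G(\chi)\eqqcolon\rho$ a.e. in $\QT$. The limit inherits the bounds $0\le\rho\le\cinf$, so $\rho\in\Linf(\QT)\subset\Ltwo(\QT)$ because $\QT$ has finite measure. Finally, $\abs{\rho\h-\rho}^2\le (2\cinf)^2\in\Lone(\QT)$, so Lebesgue's dominated convergence theorem gives $\rho\h\to\rho$ strongly in $\Ltwo(\QT)$ (and, by the same argument, in every $\Lp(\QT)$ with $1\le p<\infty$), which is the assertion. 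The argument is essentially routine; the only point warranting care is the invertibility and continuity of $H'$ near the origin, and the observation that the a.e. subsequence is used solely to commute the continuous map $G$ with the limit — an alternative is to note that $G$ is uniformly continuous on $[H'(0),H'(\cinf)]$, deduce convergence in measure of $\rho\h$ from that of $H'(\rho\h)$, and conclude again by domination, thereby avoiding the extraction of a further subsequence.
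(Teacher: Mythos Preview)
Your proof is correct and follows the same overall strategy as the paper—transfer the strong compactness of $H'(\rho\h)$ to $\rho\h$ via the inverse of $H'$ and conclude by dominated convergence—but your identification step is more direct. The paper first extracts a weak $\Ltwo$-limit $\rho$ of $(\rho\h)$ by Banach--Alaoglu and then invokes a Minty-type lemma (\cref{th:lemmaA}, from \cite{H.V.P2000}) to identify the strong limit $\chi$ of $H'(\rho\h)$ as $H'(\rho)$; only afterwards does it apply $(H')^{-1}$ pointwise. You instead define $\rho\coloneqq(H')^{-1}(\chi)$ outright, using that $H'$ is a homeomorphism onto its image, and bypass both the weak-limit extraction and the auxiliary lemma. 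Your route is a little more elementary and self-contained here, while the paper's route via \cref{th:lemmaA} is the more robust template: it would still apply if $H'$ were merely non-decreasing (e.g.\ degenerate diffusion with $H''$ vanishing on an interval), where no continuous inverse is available.
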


\begin{proof}
	The proof is based on a lemma of \cite{H.V.P2000}. For the sake of convenience we provide the reader with a version tailored to our needs, \cref{th:lemmaA} in the appendix.

	\cref{th:aprioriLinfboundrho} yields $\prt{\rho\h}\subset \Linf(\QT)$, uniformly. In particular, this implies the existence of a function, $\rho\in\Ltwo(\QT)$, such that
	\begin{align}
		\rho\h \rightharpoonup \rho, \quad \text{weakly in} \quad \Ltwo(\QT),
	\end{align}
	along a subsequence, using the well-known Banach-Alaoglu theorem. Recalling the previous compactness result, \cref{thm:compactness_entropic_part}, which provides $H'(\rho\h) \to \chi$, we may apply \cref{th:lemmaA} and deduce $\chi = H'(\rho)$. In particular, this means that
	\begin{align}
		H'(\rho\h) \to H'(\rho), \quad \text{strongly in} \quad \Ltwo(\QT),
	\end{align}
	up to a subsequence.

	The $\Ltwo$ convergence implies convergence almost-everywhere of $H'(\rho\h)$ to $H'(\rho)$. At this stage, we use the fact that $H'$ is invertible. Applying the inverse, $\prt{H'}^{-1}$, to $H'(\rho\h) \to H'(\rho)$, we recover almost everywhere convergence of $\rho\h$ to $\rho$. The $\Linf$-bound on $\prt{\rho\h}_h$ permits the application of the dominated convergence theorem, which, in turn, implies
	\begin{align}
		\rho\h \to \rho, \quad \text{strongly in } \Lp(\QT)
	\end{align}{}
	for any $1\leq p< \infty$; in particular, in $\Ltwo(\QT)$.
\end{proof}

Having obtained the convergence of $\rho\h$, we are able to discuss the convergence of the velocity term of the equation.

\begin{lemma} [$\Ltwo\prt{\QT}$-Compactness of Discrete Derivatives] \label{thm:compactnessderivatives}
	Let $\prt{\rho\h}$ be a sequence of solutions to scheme \eqref{eq:discrete} with limit $\rho$ in $\Ltwo\prt{\QT}$, and consider:
	\begin{align}
		\ddx H'(\rho\h)(t, x)         & = \frac{H'(\rho\i\np)-H'(\rho\i\n)}{\Dx},             \\
		\ddx V\h(x)                   & = \frac{V\ip-V\i}{\Dx},                               \\
		\ddx \prt{W\conv\rho}\h(t, x) & = \frac{\prt{W\conv\rho}\ip-\prt{W\conv\rho}\i}{\Dx},
	\end{align}
	for $t\in I\n$ and $x\in C\ih$. Then:
	\begin{enumerate}[(i)]
		\item $H'(\rho)$ belongs to $\Hone\prt{\Omega}$, for all times $t\in\lbrk{0,T}$.
		\item $\ddx H'(\rho\h)$ converges weakly (up to a subsequence) in $\Ltwo\prt{\QT}$ with limit $\pder{}{x} H'(\rho)$.
		\item $\ddx V\h$ (respectively $\ddx\prt{W\conv\rho}\h$) converges strongly (up to a subsequence) in the $\Ltwo\prt{\QT}$-sense to $V'$ (resp. $W'\conv\rho$).
	\end{enumerate}
\end{lemma}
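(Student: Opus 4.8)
The plan is to establish (ii) first, since it is the crux of the lemma; (i) will then follow at once, and (iii) is handled by separate, more elementary arguments. For (ii), I would start from the uniform $\Ltwo(\QT)$ bound on the discrete gradients provided by \cref{th:aprioriL2boundgradH}, which (by Banach--Alaoglu) yields a subsequence with $\ddx H'(\rho\h)\rightharpoonup G$ weakly in $\Ltwo(\QT)$ for some $G\in\Ltwo(\QT)$; the task is then to identify $G=\partial_x H'(\rho)$. Given $\varphi\in\Ctest(\QT)$, I would write the pairing $\int_{\QT}\ddx H'(\rho\h)\varphi$ as a sum over the dual cells, perform a discrete summation by parts in the spatial index (\cref{th:summationbyparts}) — the boundary contributions vanish once $h$ is small enough that $\mathrm{supp}\,\varphi$ lies in the interior cells — and arrive at
\[
	\int_{\QT}\ddx H'(\rho\h)\,\varphi\,\dx\,\dt = -\int_{\QT} H'(\rho\h)\,\widetilde{D}_h\varphi\,\dx\,\dt ,
\]
where $\widetilde{D}_h\varphi$ is a difference quotient of cell averages of $\varphi$ that converges uniformly to $\partial_x\varphi$ on $\QT$ since $\varphi$ is smooth. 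Passing to the limit on the right, using the strong convergence $H'(\rho\h)\to H'(\rho)$ in $\Ltwo(\QT)$ (obtained in the proof of \cref{thm:compactness}) together with the uniform convergence of $\widetilde{D}_h\varphi$, and comparing with $\int_{\QT}\ddx H'(\rho\h)\varphi\to\int_{\QT}G\varphi$, gives $\int_{\QT}G\varphi=-\int_{\QT}H'(\rho)\,\partial_x\varphi$ for every such $\varphi$; hence $\partial_x H'(\rho)=G\in\Ltwo(\QT)$ in the weak sense, which is (ii).

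For (i), I would combine (ii) with the fact that $\rho\in\Linf(\QT)$ — which follows from the uniform bound of \cref{th:aprioriLinfboundrho} and the a.e.\ convergence $\rho\h\to\rho$ — so that $H'(\rho)\in\Linf(\QT)\subset\Ltwo(\QT)$ has spatial weak derivative in $\Ltwo(\QT)$; thus $H'(\rho)\in\Ltwo(0,T;\Hone(\Omega))$ and $H'(\rho)(t,\cdot)\in\Hone(\Omega)$ for (almost) every $t\in\lbrk{0,T}$, with a continuous-in-time representative available by passing to the limit in the time-translate estimate of \cref{th:timeshift}. For the drift part of (iii), I would note that for $x\in C\ih$ one has $\ddx V\h(x)=\dashint_{C\i}\frac{V(s+\Dx)-V(s)}{\Dx}\ds$; since $V\in C^2([-L,L])$ this equals $V'(x)$ up to a uniform $O(\Dx)$ error, while $\abs{\ddx V\h}\le\psnorm{\infty}{\Omega}{V'}$ everywhere, so $\ddx V\h\to V'$ pointwise and boundedly, and dominated convergence gives strong convergence in $\Ltwo(\QT)$.

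The interaction part of (iii) requires a few more steps. First I would observe that the interpolation $\rho\nss\h$ of $\rho\nss=(\rho\n+\rho\np)/2$ is, up to a contribution supported on the last time slice (of measure $\Dt\to0$ and bounded in $\Linf$), the half-sum of $\rho\h$ and its one-step time translate; since $\rho\h\to\rho$ strongly in $\Ltwo(\QT)$ and translations by $\Dt\to0$ are continuous in $\Ltwo$, it follows that $\rho\nss\h\to\rho$ strongly in $\Ltwo(\QT)$. Next, expanding
\[
	\ddx(W\conv\rho\nss)\ih=\sum_{j=1}^{2M}\dashint_{C\j}\frac{W(x\ip-s)-W(x\i-s)}{\Dx}\ds\;\rho\j\nss\,\Dx ,
\]
I would replace the difference quotient by $W'(x\ih-s)+O(\Dx)$ (using $W\in C^2$) and estimate the resulting quadrature error, concluding that $\ddx(W\conv\rho\nss)\h=(W'\conv\rho\nss\h)+o(1)$ in $\Linf(\QT)$ as $h\to0$. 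Finally, since $\psnorm{2}{\Omega}{W'\conv f}\le\psnorm{\infty}{\Omega}{W'}\sqrt{2L}\,\psnorm{2}{\Omega}{f}$, convolution with $W'$ is continuous on $\Ltwo(\QT)$, so $W'\conv\rho\nss\h\to W'\conv\rho$ strongly in $\Ltwo(\QT)$; combining the last two facts proves the claim (and the argument applies verbatim if $\rho\n$ or $\rho\np$ replaces $\rho\nss$, since all three share the limit $\rho$).

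I expect the main obstacle to be the identification step in (ii): arranging the discrete summation by parts so that the primal and dual meshes match correctly and the boundary terms are discarded, while retaining quantitative control of $\widetilde{D}_h\varphi\to\partial_x\varphi$, is where the care is needed. The interaction-term estimate in (iii) is also somewhat technical — one must simultaneously handle the replacement $\rho\nss\h\to\rho$, the $O(h)$ error from the $C^2$ Taylor expansion of $W$, and the quadrature error — although each individual ingredient is routine.
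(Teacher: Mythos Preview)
Your proposal is correct and follows the standard route; note, however, that the paper does not actually give a proof of this lemma --- it simply states that the argument is ``identical to the proofs of Lemma~4.2 and Proposition~1 in \cite{C.F.S2020}'' and omits it. Your outline (weak $\Ltwo$ compactness of $\ddx H'(\rho\h)$ from \cref{th:aprioriL2boundgradH}, identification of the limit via discrete integration by parts against a test function using the strong convergence of $H'(\rho\h)$, and direct Taylor/quadrature arguments for the potential terms) is precisely the approach taken in that reference, so there is nothing to compare.
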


We omit the proof of this lemma as it is identical to the proofs of Lemma 4.2 and Proposition 1 in \cite{C.F.S2020}.
 \section[Convergence of the Scheme] {Convergence of the Scheme: Proof of \cref{th:convergence}} \label{sec:convergence_scheme}

This section is devoted to proving the main result, \cref{th:convergence}. Having established all necessary estimates, we are prepared to prove the convergence of the piecewise constant interpolations, $\prt{\rho_h}$, associated to scheme \eqref{eq:discrete} to weak solutions of \Cref{eq:continuous} in the sense of \Cref{def:weaksolution}.

To begin, let $\varphi$ be a smooth test function such that $\varphi(T)=0$. We define the error term
\begin{align} \label{eq:errorterm}
	\begin{split}
		\epsilon(h)
		&= -\int_{0}^{T} \prt*{ \int_{-L}^{L} \rho\h \pder{\varphi}{t} \dx - \int_{-L+\Dx/2}^{L-\Dx/2} \rho\h \pder{\varphi}{x} \ddx \brk{H'(\rho\h)+V\h+\prt{W\conv\rho}\h} } \dx\dt \\
		&\quad - \int_{-L}^{L} \rho\h(0, x) \varphi(0, x) \dx,
	\end{split}
\end{align}
and observe that, as the mesh size goes to zero, all the terms of this quantity converge, by virtue of \cref{thm:compactness,thm:compactnessderivatives}, to those of the weak solution of \cref{eq:continuous}:
\begin{align}
	\epsilon(h) & \rightarrow
	-\int_{0}^{T} \int_{-L}^{L} \rho \prt*{\pder{\varphi}{t} - \pder{\varphi}{x} \pder{}{x} \brk{H'(\rho)+V+\prt{W\conv\rho}} } \dx\dt
	-\int_{-L}^{L} \rho(0, x) \varphi(0, x) \dx.
\end{align}
In the spirit of \cite{C.F.S2020}, it remains to show that $\epsilon(h) \rightarrow 0$, by comparing Eq. \eqref{eq:errorterm} with the scheme, thereby proving that $\rho$ is, indeed, a weak solution.

In order to prove this claim, we define the following cell averages of the test function, \revisionOne{$\varphi$},
\begin{align}
	\varphi\i(t) \coloneqq \dashint_{C\i} \varphi(t,x) \dx, \quad \varphi\ih(t) \coloneqq \dashint_{C\ih} \varphi(t,x) \dx, \quad \text{as well as} \quad \varphi\i\n = \varphi\i(t\n).
\end{align}
Multiplying scheme \eqref{eq:discrete} by $\varphi\i\np$ and integrating, we obtain
\begin{align} \label{eq:weak_formulation_at_level_of_scheme}
	\begin{split}
		0
		&= \sum_{n=0}^{N} \sum_{i=1}^{2M} \prt*{ \frac{\rho\i\np-\rho\i\n}{\Dt} + \frac{F\ih\np-F\imh\np}{\Dx} } \varphi\i\np \Dx\Dt \\
		&= - \sum_{i=1}^{2M} \prt*{ \sum_{n=1}^{N} \rho\i\n \prt{\varphi\i\np-\varphi\i\n} + \rho\i^{0} \varphi\i^{1}} \Dx
		- \sum_{n=0}^{N} \sum_{i=1}^{2M-1} F\ih\np \ddx \varphi\ih\np \Dx \Dt,
	\end{split}
\end{align}
where we have used summation by parts, $\varphi\i^{N+1} = 0$, and the no-flux boundary conditions, i.e., $F_{1/2}\np = F_{2M + 1/2}\np = 0$. As before, we manipulate the flux term and observe that
\begin{align}
	\lll \sum_{n=0}^{N} \sum_{i=1}^{2M-1} F\ih\np
	\ddx \varphi\ih\np \Dx \Dt                                                                                                                \\
	 & = \sum_{n=0}^{N} \sum_{i=1}^{2M-1} \ddx \varphi\ih\np \prt*{ \rho\i\np\pos{u\ih\np} + \rho\ip\np\neg{u\ih\np} } \Dx \Dt                \\
	 & = \sum_{n=0}^{N} \sum_{i=1}^{2M-1} \ddx \varphi\ih\np \prt*{ \rho\i\np u\ih\np + \prt{\rho\ip\np - \rho\i\np} \neg{u\ih\np} } \Dx \Dt.
\end{align}
Using this, we note that Eq. \eqref{eq:weak_formulation_at_level_of_scheme} can be written as
\begin{align}
	0 = \hat{\mathcal{T}}(h)
	+ \hat{\mathcal{H}}(h)
	+ \hat{\mathcal{V}}(h)
	+ \hat{\mathcal{W}}(h)
	+ \hat{\mathcal{E}}(h),
\end{align}
where
\begin{align}
	\hat{\mathcal{T}}(h) & \coloneqq - \sum_{i=1}^{2M} \prt*{ \sum_{n=1}^{N} \rho\i\n \prt{\varphi\i\np-\varphi\i\n} + \rho\i^{0} \varphi\i^{1}} \Dx, \\
	\hat{\mathcal{H}}(h) & \coloneqq \sum_{n=0}^{N} \sum_{i=1}^{2M-1} \ddx \varphi\ih\np \rho\i\np \ddx H'\prt{\rho\ih\np} \Dx \Dt,                   \\
	\hat{\mathcal{V}}(h) & \coloneqq \sum_{n=0}^{N} \sum_{i=1}^{2M-1} \ddx \varphi\ih\np \rho\i\np \ddx V\ih \Dx \Dt,                                 \\
	\hat{\mathcal{W}}(h) & \coloneqq \sum_{n=0}^{N} \sum_{i=1}^{2M-1} \ddx \varphi\ih\np \rho\i\np \ddx (W\conv\rho^{**})\ih \Dx \Dt,                 \\
	\hat{\mathcal{E}}(h) & \coloneqq - \sum_{n=0}^{N} \sum_{i=1}^{2M-1} \ddx \varphi\ih\np \prt{\rho\ip\np - \rho\i\np} \neg{u\ih\np} \Dx \Dt.
\end{align}
In a similar fashion, we may rewrite Eq. \eqref{eq:errorterm} as
\begin{align}
	\epsilon(h)
	 & = \mathcal{T}(h)
	+ \mathcal{H}(h)
	+ \mathcal{V}(h)
	+ \mathcal{W}(h),
\end{align}
with terms given by
\begin{align}
	\mathcal{T}(h) & \coloneqq -\int_{-L}^{L}\prt*{ \int_{0}^{T} \rho\h\pder{\varphi}{t} \dt + \rho\h(0,x)\varphi(0,x) }\dx,     \\
	\mathcal{H}(h) & \coloneqq \int_{0}^{T} \int_{-L+\Dx/2}^{L-\Dx/2} \rho\h \pder{\varphi}{x} \ddx {H'(\rho\h)} \dx\dt,         \\
	\mathcal{V}(h) & \coloneqq \int_{0}^{T} \int_{-L+\Dx/2}^{L-\Dx/2} \rho\h \pder{\varphi}{x} \ddx {V\h} \dx\dt,                \\
	\mathcal{W}(h) & \coloneqq \int_{0}^{T} \int_{-L+\Dx/2}^{L-\Dx/2} \rho\h \pder{\varphi}{x} \ddx {\prt{W\conv\rho}\h} \dx\dt.
\end{align}
Already noticing the resemblance, we will show that
\begin{align} \label{eq:comparison_scheme_weak_formulation}
	\abs{\mathcal{T}(h)-\hat{\mathcal{T}}(h)}, \,
	\abs{\mathcal{H}(h)-\hat{\mathcal{H}}(h)}, \,
	\abs{\mathcal{V}(h)-\hat{\mathcal{V}}(h)}, \,
	\abs{\mathcal{W}(h)-\hat{\mathcal{W}}(h)}, \,
	\hat{\mathcal{E}}(h)
	\rightarrow 0,
\end{align}
as $h\to 0$.
 \paragraph{The Time Term.}

The first term is exactly zero since
\begin{align}
	\lll \mathcal{T}(h) - \hat{\mathcal{T}}(h)                                                                                                                           \\
	 & = \sum_{i=1}^{2M} \prt*{ \sum_{n=1}^{N} \rho\i\n \prt{\varphi\i\np-\varphi\i\n} + \rho\i^{0} \varphi\i^{1} } \Dx
	-\int_{-L}^{L} \prt*{ \int_{0}^{T} \rho\h \pder{\varphi}{t} \dt + \rho\h(0,x)\varphi(0,x) }\dx                                                                       \\
	 & = \sum_{i=1}^{2M} \Biggl( \sum_{n=1}^{N} \rho\i\n \brk*{ \prt{\varphi\i\np-\varphi\i\n} - \dashint_{C\i} \int_{I\n} \pder{\varphi}{t} \dt\dx }\Biggr)             \\
	 & \quad + \sum_{i=1}^{2M} \biggl(\rho\i^{0} \brk*{ \varphi\i^{1} - \dashint_{C\i} \prt*{ \int_{I^{0}} \pder{\varphi}{t} \dt + \varphi(0,x) } \dx } \Biggr) \Dx = 0.
\end{align}

\paragraph{The Entropic Term.}

We observe
\begin{align}
	\lll\mathcal{H}(h) - \hat{\mathcal{H}}(h)                                                                           \\
	 & = \int_{0}^{T} \int_{-L+\Dx/2}^{L-\Dx/2} \rho\h \pder{\varphi}{x} \ddx {H'(\rho\h)} \dx\dt
	- \sum_{n=0}^{N} \sum_{i=1}^{2M-1} \ddx \varphi\ih\np \rho\i\np \ddx H'\prt{\rho\ih\np} \Dx\Dt                      \\
	 & = \sum_{n=1}^{N} \sum_{i=1}^{2M-1}\prt*{\int_{I\n} \int_{C\ih} \rho\h \pder{\varphi}{x} \ddx {H'(\rho\h)} \dx\dt
		- \ddx \varphi\ih\n \rho\i\n \ddx H'\prt{\rho\ih\n} \Dx\Dt}                                                         \\
	 & \quad + R_{\mathcal{H}}(h),
\end{align}
with remainder given by
\begin{align}
	R_{\mathcal{H}}(h)
	 & \coloneqq \sum_{i=1}^{2M-1}\int_{I^{0}} \int_{C\ih} \rho\h \pder{\varphi}{x} \ddx {H'(\rho\h)} \dx\dt \\
	 & \quad - \sum_{i=1}^{2M-1} \ddx \varphi\ih^{N+1} \rho\i^{N+1} \ddx H'\prt{\rho\ih^{N+1}} \Dx\Dt.
\end{align}
This can be readily controlled as
\begin{align}
	\abs{R_{\mathcal{H}}(h)}
	 & \leq \int_{I^{0}} \int_{-L+\Dx/2}^{L-\Dx/2} \abs*{\rho\h \pder{\varphi}{x} \ddx {H'(\rho\h)} }\dx\dt       \\
	 & \quad + \sum_{i=1}^{2M-1} \abs*{ \ddx\varphi\ih^{N+1} \rho\i^{N+1} \ddx H'\prt{\rho\ih^{N+1}}} \Dx\Dt      \\
	 & \leq 2\sqrt{2L} \curlyC_\infty \psnorm*{\infty}{\QT}{\pder{\varphi}{x}} \curlyC_{\partial_x H'}^{1/2} \Dt,
\end{align}
by using the Cauchy-Schwarz inequality on both terms, as well as the fact that
\begin{align}
	\abs*{\ddx\varphi\ih\n}
	\leq \psnorm*{\infty}{\Omega}{\pder{\varphi}{x}}.
\end{align}

Having dealt with the remainder, we find
\begin{align}
	\lll \mathcal{H}(h) - \hat{\mathcal{H}}(h) - R_{\mathcal{H}}(h)                                                                                                                        \\
	 & = \sum_{n=1}^{N} \sum_{i=1}^{2M-1}\biggl(\int_{I\n} \int_{C\ih} \rho\h \pder{\varphi}{x} \ddx {H'(\rho\h)} \dx\dt - \ddx \varphi\ih\n \rho\i\n \ddx H'\prt{\rho\ih\n} \Dx\Dt\biggr) \\
	 & = \sum_{n=1}^{N} \sum_{i=1}^{2M-1}
	\ddx H'\prt{\rho\ih\n} \Biggl(\int_{I\n} \brk*{\rho\i\n \int_{C\i\cap C\ih} \pder{\varphi}{x} \dx + \rho\ip\n \int_{C\ih\cap C\ip} \pder{\varphi}{x} \dx} \dt                          \\
	 & \quad - \ddx \varphi\ih\n \rho\i\n \Dx\Dt \vphantom{\int_{I\n} \brk*{\rho\i\n \int_{C\i\cap C\ih} \pder{\varphi}{x} \dx}} \Biggr)                                                   \\
	 & = \sum_{n=1}^{N} \sum_{i=1}^{2M-1} \ddx H'\prt{\rho\ih\n} \int_{I\n} \Bigl( \rho\i\n \brk*{\varphi\prt{t,x\ih} - \varphi\prt{t,x\i}}                                                \\
	 & \quad + \rho\ip\n \brk*{\varphi\prt{t,x\ip} - \varphi\prt{t,x\ih}} - \rho\i\n\brk{\varphi\ip\n - \varphi\i\n} \Bigr) \dt                                                            \\
	 & = \sum_{n=1}^{N} \sum_{i=1}^{2M-1} \ddx H'\prt{\rho\ih\n} \rho\i\n \int_{I\n} \brk*{\varphi\prt{t,x\ip} - \varphi\prt{t,x\i}} - \brk*{\varphi\ip\n - \varphi\i\n} \dt               \\
	 & \quad + \sum_{n=1}^{N} \sum_{i=1}^{2M-1} \ddx H'\prt{\rho\ih\n} \prt{\rho\ip\n-\rho\i\n} \int_{I\n} \brk*{\varphi\prt{t,x\ip} - \varphi\prt{t,x\ih}} \dt.
\end{align}

The first term in the sum can be shown to vanish in the limit by noting that
\begin{align}
	\lll \abs*{\brk*{\varphi\prt{t,x\ip} - \varphi\prt{t,x\i}} - \brk*{\varphi\ip\n - \varphi\i\n}}                                                \\
	 & = \abs*{\frac{\varphi\prt{t,x\ip} - \varphi\prt{t,x\i}}{\Dx} - \dashint_{C\i}\frac{\varphi\prt{t\n,\Dx+s} - \varphi\prt{t\n,s}}{\Dx}\ds}\Dx \\
	 & = \abs*{\pder{\varphi}{x}\prt{t,\theta\ih} - \frac{\varphi\prt{t\n,\Dx+\hat\theta\i} - \varphi\prt{t\n,\hat\theta\i}}{\Dx}\ds}\Dx           \\
	 & = \abs*{\pder{\varphi}{x}\prt{t,\theta\ih} - \pder{\varphi}{x}\prt{t\n,\tilde\theta\ih}}\Dx                                                 \\
	 & \leq \prt*{ \psnorm*{\infty}{\QT}{\secondpder{\varphi}{x}} + \psnorm*{\infty}{\QT}{\doublepder{\varphi}{t}{x}}} \prt*{\Dt + \Dx} \Dx,
\end{align}
for some constants $\theta\ih\in C\ih$, $\hat\theta\i\in C\i$, $\tilde\theta\ih\in C\i\cap C\ip$. Thus we control the term by
\begin{align}
	\lll \sum_{n=1}^{N} \sum_{i=1}^{2M-1} \ddx H'\prt{\rho\ih\n} \rho\i\n \int_{I\n} \brk*{\varphi\prt{t,x\ip} - \varphi\prt{t,x\i}} - \brk*{\varphi\ip\n - \varphi\i\n} \dt \\
	 & \leq \mathcal{C}\prt{\Dt + \Dx}\sum_{n=1}^{N} \sum_{i=1}^{2M-1} \ddx H'\prt{\rho\ih\n} \rho\i\n \Dx\Dt                                                                \\
	 & \leq \mathcal{C}\sqrt{2LT}\prt{\Dt + \Dx}\curlyC_\infty \curlyC_{\partial_x H'}^{1/2},
\end{align}
which vanishes as the mesh size goes to zero.

Handling the second sum in a similar fashion, we see
\begin{align}
	\lll \sum_{n=1}^{N} \sum_{i=1}^{2M-1} \ddx H'\prt{\rho\ih\n} \prt{\rho\ip\n-\rho\i\n} \int_{I\n} \brk*{\varphi\prt{t,x\ip} - \varphi\prt{t,x\ih}} \dt                                 \\
	 & \leq \psnorm*{\infty}{\QT}{\pder{\varphi}{x}} \sum_{n=1}^{N} \sum_{i=1}^{2M-1} \ddx H'\prt{\rho\ih\n} \prt{\rho\ip\n-\rho\i\n}\Dx\Dt                                               \\
	 & \leq \psnorm*{\infty}{\QT}{\pder{\varphi}{x}} \psnorm{2}{\QT}{\ddx H'\prt{\rho\h}} \prt*{ \int_{0}^{T} \int_{L}^{L-\Dx} \abs*{ \rho\h(t, x+\Dx) - \rho\h(t, x) }^2 \dx \dt}^{1/2},
\end{align}
through the use of Cauchy-Schwarz. As discussed previously, this integral term tends to zero with the mesh size, due to the convergence of $\rho$. Hence, we conclude $\abs{\mathcal{H}(h)-\hat{\mathcal{H}}(h)}\rightarrow 0$, as claimed.

\paragraph{The Potential Terms.}

The prior computation applied to the external potential term yields
\begin{align}
	\lll \mathcal{V}(h)-\hat{\mathcal{V}}(h)                                                                                                                                        \\
	 & = \int_{0}^{T} \int_{-L+\Dx/2}^{L-\Dx/2} \rho\h \pder{\varphi}{x} \ddx {V\h} \dx\dt - \sum_{n=0}^{N} \sum_{i=1}^{2M-1} \ddx \varphi\ih\np \rho\i\np \ddx V\ih \Dx \Dt        \\
	 & = \sum_{n=1}^{N} \sum_{i=1}^{2M-1}\prt*{\int_{I\n} \int_{C\ih} \rho\h \pder{\varphi}{x} \ddx V\h \dx\dt - \ddx \varphi\ih\n \rho\i\n \ddx V\ih \Dx\Dt} + R_{\mathcal{V}}(h),
\end{align}
with remainder term
\begin{align}
	R_{\mathcal{V}}(h) \coloneqq
	\sum_{i=1}^{2M-1}\int_{I^{0}} \int_{C\ih} \rho\h \pder{\varphi}{x} \ddx {V\h} \dx\dt
	- \sum_{i=1}^{2M-1} \ddx \varphi\ih^{N+1} \rho\i^{N+1} \ddx V\ih \Dx\Dt,
\end{align}
which is controlled by
\begin{align}
	\abs{R_{\mathcal{V}}(h)}
	 & \leq 4L \curlyC_\infty \psnorm*{\infty}{\QT}{\pder{\varphi}{x}} \psnorm*{\infty}{\Omega}{V'} \Dt,
\end{align}
the analogue of the bound for the previous term. This is shown by employing the $\Linf$-bound on $V'$ in place of the $\Ltwo$ bound on the entropic term. Following the same strategy, we find
\begin{align}
	\mathcal{V}(h) - \hat{\mathcal{V}}(h) - R_{\mathcal{V}}(h)
	 & = \sum_{n=1}^{N} \sum_{i=1}^{2M-1} \ddx V\ih \rho\i\n \int_{I\n} \brk*{\varphi\prt{t,x\ip} - \varphi\prt{t,x\i}} - \brk*{\varphi\ip\n - \varphi\i\n} \dt \\
	 & \quad + \sum_{n=1}^{N} \sum_{i=1}^{2M-1} \ddx V\ih \prt{\rho\ip\n-\rho\i\n} \int_{I\n} \brk*{\varphi\prt{t,x\ip} - \varphi\prt{t,x\ih}} \dt.
\end{align}
Each of the sums is shown to tend to zero as the mesh size goes to zero, just as previously. Thus we see $\abs{\mathcal{V}(h)-\hat{\mathcal{V}}(h)}\rightarrow 0$.

The discussion of the interaction term is identical:
\begin{align}
	\mathcal{W}(h) - \hat{\mathcal{W}}(h)
	 & = \sum_{n=1}^{N} \sum_{i=1}^{2M-1} \ddx (W\conv\rho^{**})\ih \rho\i\n \int_{I\n} \brk*{\varphi\prt{t,x\ip} - \varphi\prt{t,x\i}} - \brk*{\varphi\ip\n - \varphi\i\n} \dt \\
	 & \quad + \sum_{n=1}^{N} \sum_{i=1}^{2M-1} \ddx (W\conv\rho^{**})\ih \int_{I\n} \brk*{\varphi\prt{t,x\ip} - \varphi\prt{t,x\ih}} \dt + R_{\mathcal{W}}(h).
\end{align}
Each of the sums (as well as the remainder $R_{\mathcal{W}}(h)$) are shown to vanish in the limit, proving $\abs{\mathcal{W}(h)-\hat{\mathcal{W}}(h)}\rightarrow 0$.

\paragraph{The Error Term.}

Finally, we show a control on $\hat{\mathcal{E}}(h)$ using some of the previous estimates:
\begin{align}
	\hat{\mathcal{E}}(h)
	 & \leq \sum_{n=0}^{N} \sum_{i=1}^{2M-1} \abs{ \ddx \varphi\ih\np } \abs{ \rho\ip\np - \rho\i\np } \abs{ u\ih\np } \Dx \Dt                                               \\
	 & \leq \psnorm*{\infty}{\OmegaT}{\pder{\varphi}{x}} \sum_{n=0}^{N} \sum_{i=1}^{2M-1} \abs{ \rho\ip\np - \rho\i\np } \abs{ u\ih\np } \Dx \Dt                             \\
	 & \leq \psnorm*{\infty}{\OmegaT}{\pder{\varphi}{x}} \sum_{n=0}^{N} \sum_{i=1}^{2M-1} \abs{ \rho\ip\np - \rho\i\np } \prt*{ \abs{ h\ih\np } + \abs{ v\ih\np } } \Dx \Dt,
\end{align}
splitting once again the velocity term
as was done in the proof of \cref{th:aprioriLinfboundrho},
\begin{align}
	\hat{\mathcal{E}}(h)
	 & \leq
	\psnorm*{\infty}{\OmegaT}{\pder{\varphi}{x}} \Biggl[
		\prt*{\sum_{n=0}^{N} \sum_{i=1}^{2M-1} \abs{ \rho\ip\np - \rho\i\np }^2 \Dx \Dt}^{1/2}
		\prt*{\sum_{n=0}^{N} \sum_{i=1}^{2M-1} \abs{ h\ih\np }^2 \Dx \Dt}^{1/2}                                                                                                               \\
	 & \quad + \prt*{\psnorm{\infty}{\Omega}{V'} + \psnorm{\infty}{\Omega}{W'} \psnorm{1}{\Omega}{\rho_0}} \sum_{n=0}^{N} \sum_{i=1}^{2M-1} \abs{ \rho\ip\np - \rho\i\np } \Dx \Dt\Biggr] \\
	 & \leq \mathcal{C} \prt*{\sum_{n=0}^{N} \sum_{i=1}^{2M-1} \abs{ \rho\ip\np - \rho\i\np }^2 \Dx \Dt}^{1/2},
\end{align}
by the repeated application of the Cauchy-Schwarz inequality, where
\begin{align}
	\psnorm*{\infty}{\OmegaT}{\pder{\varphi}{x}}
	\prt*{
		\curlyC_{\partial_x H'}^{1/2}
		+ \sqrt{LT} \prt*{ \psnorm{\infty}{\Omega}{V'} + \psnorm{\infty}{\Omega}{W'} \psnorm{1}{\Omega}{\rho_0} }
	}\leq \mathcal{C},
\end{align}
a constant independent of the mesh (see \cref{th:potentialderivative,th:aprioriL2boundgradH}). We thus find
\begin{align}
	\hat{\mathcal{E}}(h)
	 & \leq \mathcal{C} \prt*{\sum_{n=0}^{N} \sum_{i=1}^{2M-1} \abs{ \rho\ip\np - \rho\i\np }^2 \Dx \Dt}^{1/2} \\
	 & = \mathcal{C} \prt*{
		\int_{0}^{T}
		\int_{L}^{L-\Dx}
		\abs*{ \rho\h(t+\Dt, x+\Dx) - \rho\h(t+\Dt, x) }^2 \dx \dt}^{1/2},
\end{align}
a term which vanishes with the mesh size $h$ due to the convergence of $\rho\h$.

\paragraph{Convergence.}

Combining the results of the preceding steps proves that \eqref{eq:comparison_scheme_weak_formulation} holds indeed. Therefore, we conclude that $\epsilon(h)\rightarrow 0$, which proves the convergence of the scheme with limit a weak solution to \cref{eq:continuous} in the sense of \cref{def:weaksolution}, as claimed in \cref{th:convergence}.
 \section{The Case of Linear Diffusion} \label{sec:linear}

The discussion of the convergence of scheme \eqref{eq:discrete} cannot be complete without addressing the case of linear diffusion. The choice of Boltzmann's entropy, $H(\rho) = \rho \log\prt{\rho} - \rho$, renders \cref{eq:continuous} into the linear drift-diffusion equation:
\begin{align} \label{eq:continuous_linear}
	\pder{\rho}{t} = \secondpder{\rho}{x} + \pder{}{x}\brk*{\rho\pder{}{x}\prt{V(x)+W(x)\conv\rho}}.
\end{align}
\Cref{eq:continuous_linear} is within the purview of \cite{McCann1997,Otto2001,C.M.V2003,C.M.V2006}, and the variational structure \eqref{eq:energydissipation} persists.

However, the choice of entropy $H(\rho) = \rho \log\prt{\rho} - \rho$ cannot be addressed by the main result, \cref{th:convergence}. The derivative $H'(\rho) = \log\prt{\rho}$ lacks Lipschitz continuity at zero, which causes the estimates of \cref{th:timeshift,th:spaceshift} to break down. In fact, the problem is even more blatant: no matter the approach, one must control the quantity $\rho \px \log\prt{\rho}$ in order to reproduce the estimates of \cref{sec:convergence_scheme}. At the continuous level, this is equivalent to controlling $\px \rho$; however, in the discrete case, the quantity appears as
\begin{align} \label{eq:trouble_quantity}
	\rho\i\np\prt*{\frac{\log\prt{\rho\ip\np}-\log\prt{\rho\i\np}}{\Dx}},
\end{align}
which may be unbounded.

The only viable approach is to apply the scheme to positive initial data, $\rho_0(x) \geq \varepsilon$ \revisionTwo{for all $x\in\Omega$}, for some $\varepsilon >0$. Using the propagation of lower bounds from \cref{th:aprioriLinfboundrho}, we will infer the boundedness of the solution away from zero for all times, $\rho\i\n \geq\kappa^{-1}>0$, which permits the control of terms like \eqref{eq:trouble_quantity} through
\begin{align} \label{eq:trouble_quantity_control}
	\rho\i\np\abs*{\frac{\log\prt{\rho\ip\np}-\log\prt{\rho\i\np}}{\Dx}}
	\leq \curlyC_\infty \kappa \abs*{\frac{\rho\ip\np-\rho\i\np}{\Dx}}.
\end{align}

Rather than obtaining the compactness of the family $H\prt{\rho\h}$ through the auxiliary functional $K$, we will obtain the compactness of $\prt{\rho_h}$ directly through the classical functional $\pnormp{2}{\rho}$. To be precise, we find
\begin{align}
	\sum_{i=1}^{2M} \prt*{ \prt*{\rho\i\np}^2 - \prt*{\rho\i\n}^2 }
	 & \leq \sum_{i=1}^{2M} \rho\i\np \prt*{\rho\i\np - \rho\i\n}
	=- \Dt \sum_{i=1}^{2M} \ddx \rho\ih\np F\ih\np                                   \\
	 & \leq \alpha^{-1} C - (1-\alpha) \Dt \sum_{i=1}^{2M} \abs*{\ddx \rho\ih\np}^2,
\end{align}
where the last line is obtained from the bounds on the potentials $V, W$, and the application of Young's inequality with parameter $\alpha\in(0,1)$. This readily yields a bound
\begin{align} \label{eq:linearL2}
	\psnormp{2}{\OmegaT}{\ddx \rho\h}\leq \cdR,
\end{align}
in the style of \cref{th:aprioriL2boundgradH}.

Armed with gradient information, we find bounds for the shifts in time and space of $\rho\h$ which are equivalent to those of \cref{th:timeshift,th:spaceshift}. The proof of the space shift is identical, and the time shift argument is parallel until the term \eqref{eq:trouble_quantity} appears, which is readily controlled as in \eqref{eq:trouble_quantity_control}. For completeness, we sketch the idea while neglecting the potential terms, which can be incorporated using Young's inequality. After introducing the discretisation from \cref{eq:cmptimeshift}, we find
\begin{align}
	\lll
	\sum_{n=0}^{N-K}
	\sum_{i=1}^{2M}
	\abs{\rho\i\K-\rho\i\n}^2
	\Dx\Dh    \\
	 & =
	\sum_{n=0}^{N-K}
	\sum_{i=1}^{2M}
	\prt{\rho\i\K-\rho\i\n}
	\sum_{k=1}^{K}
	\frac{\rho\i\nk-\rho\i\nkm}{\Dt}
	\Dt\Dx\Dh \\
	 & =
	-
	\sum_{n=0}^{N-K}
	\sum_{i=1}^{2M}
	\prt{\rho\i\K-\rho\i\n}
	\sum_{k=1}^{K}
	\ddx F\i\nk
	\Dt\Dx\Dh
	\\
	 & =
	\sum_{n=0}^{N-K}
	\sum_{i=1}^{2M-1}
	\prt{\ddx\rho\ih\K-\ddx\rho\ih\n}
	\sum_{k=1}^{K}
	F\ih\nk
	\Dt\Dx\Dh \\
	 & =
	\sum_{n=0}^{N-K}
	\sum_{i=1}^{2M-1}
	\sum_{k=1}^{K}
	\prt{\ddx\rho\ih\K-\ddx\rho\ih\n}
	\prt{\rho\i\nk\pos{h\ih\nk} + \rho\ip\nk\neg{h\ih\nk}}
	\Dt\Dx\Dh
	.
\end{align}
Using the control from \eqref{eq:trouble_quantity_control}, we observe
\begin{align}
	\abs*{\rho\i\nk\pos{h\ih\nk} + \rho\ip\nk\neg{h\ih\nk}}
	\leq 2 \curlyC_\infty \kappa \abs{\ddx\rho\ih\nk},
\end{align}
which simply yields
\begin{align}
	\lll
	\sum_{n=0}^{N-K}
	\sum_{i=1}^{2M}
	\abs{\rho\i\K-\rho\i\n}^2
	\Dx\Dh                                          \\
	 & =
	\sum_{n=0}^{N-K}
	\sum_{i=1}^{2M-1}
	\sum_{k=1}^{K}
	\prt{\ddx\rho\ih\K-\ddx\rho\ih\n}
	\prt{\rho\i\nk\pos{h\ih\nk} + \rho\ip\nk\neg{h\ih\nk}}
	\Dt\Dx\Dh                                       \\
	 & \leq 2\curlyC_\infty \kappa \sum_{n=0}^{N-K}
	\sum_{i=1}^{2M-1}
	\sum_{k=1}^{K}
	\prt{\abs{\ddx\rho\ih\K}+\abs{\ddx\rho\ih\n}}
	\abs{\ddx\rho\ih\nk}
	\Dt\Dx\Dh.
\end{align}
Expanding the product term by term, using Young's inequality, and extending the sum over $n$, we finally obtain
\begin{align}
	\sum_{n=0}^{N-K}
	\sum_{i=1}^{2M}
	\abs{\rho\i\K-\rho\i\n}^2
	\Dx\Dh
	\leq 2 \curlyC_\infty \kappa \sum_{n=0}^{N}
	\sum_{i=1}^{2M-1}
	\sum_{k=1}^{K}
	\abs{\ddx\rho\ih\n}^2
	\Dt\Dx\Dh
	\leq \curlyC K\Dh,
\end{align}
which is analogous to the bound found in \cref{th:timeshift}, and permits the completion of the proof in the same fashion.

The compactness of $\prt{\rho\h}$ on $\QT$ now follows from the Frechet-Kolmogorov-Riesz theorem, and the weak compactness of the derivatives, $\prt{\ddx\rho\h}$, once again follows from Proposition 1 in \cite{C.F.S2020}. This is enough to pass to the limit by reproducing the estimates of \cref{sec:convergence_scheme}, using the newly found estimates \eqref{eq:trouble_quantity_control} and \eqref{eq:linearL2}, proving that \eqref{eq:comparison_scheme_weak_formulation} holds indeed. Therefore, we conclude the convergence of the scheme:

\begin{customthm}{\ref{th:convergence} revisited}[Convergence of the scheme]
	Suppose that $H(\rho) = \rho \log\prt{\rho} - \rho$ and that $V, W \in C^2([-L, L])$. Then, for any strictly positive initial datum $\rho_0\in\Linf(\Omega)$, we find that:
	\begin{enumerate}[(i)]
		\item scheme \eqref{eq:discrete} admits a strictly positive solution that preserves the initial mass regardless of the mesh size;
		\item under the condition ${\curlyC}_V^{(2)}\Dt < 1$ (viz. \cref{th:aprioriLinfboundrho}), the associated piecewise constant interpolation, $\rho_h$, converges strongly in any $\Lp(\QT)$, for $1\leq p<\infty$, up to a subsequence;
		\item the limit is a weak solution to \Cref{eq:continuous} in the sense of \Cref{def:weaksolution}.
	\end{enumerate}
\end{customthm}

\begin{remark}[Non-strictly positive data]
	If the initial datum $\rho_0$ is positive but not strictly so, we may approximate it by $\rho_0^{\varepsilon}(x) \coloneqq \rho_0(x) + \varepsilon$, for some $\varepsilon>0$. Then, choosing $\varepsilon$ and the mesh size $h$ sufficiently small, and using the continuity of the aggregation-diffusion equation \eqref{eq:continuous_linear} with respect to the datum, we nevertheless obtain the convergence of the numerical scheme.
\end{remark}
 
\section*{Acknowledgements}

JAC was partially supported by EPSRC grant  EP/P031587/1 and the Advanced Grant Nonlocal-CPD (Nonlocal PDEs for Complex Particle Dynamics:
Phase Transitions, Patterns and Synchronization) of the European Research Council Executive Agency (ERC) under the European Union's Horizon 2020 research and innovation programme (grant agreement No. 883363).
HM was partially supported by JSPS KAKENHI grant numbers 17K05368 and 18H01139, and JST CREST grant number JPMJCR14D3. MS fondly acknowledges the support of the Fondation Sciences Math\'ematiques de Paris (FSMP) for the postdoctoral fellowship. \singleappendix

For the sake of a complete exposition we reproduce a lemma of \cite{H.V.P2000} in a form tailored to our needs.
\begin{lemma}
	\label{th:lemmaA}
	{\normalfont \cite[Lemma A]{H.V.P2000}}
	Let $\prt{u_n} \subset \Linf(\QT)$ and $f \in C(\R)$ be such that:
	\begin{itemize}
		\item $u_n \rightharpoonup u$, weakly in $\Ltwo(\QT)$;
		\item $f$ is non-decreasing;
		\item $f(u_n) \rightarrow \chi$, strongly in $L^2(\QT)$.
	\end{itemize}{}
	Then $\chi = f(u)$.
\end{lemma}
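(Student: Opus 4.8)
The plan is to prove this via the classical Minty--Browder monotonicity trick. The only structural input is that, since $f$ is non-decreasing, the elementary inequality $\bigl(f(a)-f(b)\bigr)(a-b)\geq 0$ holds for all $a,b\in\R$. Applying this pointwise and integrating, one gets, for every bounded competitor $w\in\Linf(\QT)$,
\[
	\int_{\QT}\bigl(f(u_n)-f(w)\bigr)(u_n-w)\geq 0,
\]
which is legitimate because $f(w)\in\Linf(\QT)\subset\Ltwo(\QT)$ by continuity of $f$, and because the functions $u_n$ — and hence their weak limit $u$ — are bounded uniformly in $\Linf(\QT)$, as is the case in the application of the lemma.

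First I would pass to the limit $n\to\infty$ in this inequality. Expanding the product, the term $\int_{\QT} f(w)w$ is constant, the term $\int_{\QT} f(u_n)w$ converges since $f(u_n)\to\chi$ strongly in $\Ltwo(\QT)$, the term $\int_{\QT} f(w)u_n$ converges since $u_n\rightharpoonup u$ weakly in $\Ltwo(\QT)$, and the term $\int_{\QT} f(u_n)u_n$ converges to $\int_{\QT}\chi u$ because it is the pairing of an $\Ltwo$-strongly convergent sequence with an $\Ltwo$-weakly convergent one. This yields $\int_{\QT}\bigl(\chi-f(w)\bigr)(u-w)\geq 0$ for every $w\in\Linf(\QT)$. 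Next I would apply the Minty trick: fix $\varphi\in\Linf(\QT)$ and $\lambda>0$, and substitute $w=u+\lambda\varphi$ (admissible since $u\in\Linf(\QT)$); after dividing by $\lambda$ this gives $\int_{\QT}\bigl(\chi-f(u+\lambda\varphi)\bigr)\varphi\leq 0$. Letting $\lambda\to 0^+$ and using the continuity of $f$ together with dominated convergence (the integrand being uniformly bounded) produces $\int_{\QT}\bigl(\chi-f(u)\bigr)\varphi\leq 0$; replacing $\varphi$ by $-\varphi$ gives the reverse inequality, and since $\varphi\in\Linf(\QT)$ is arbitrary we conclude $\chi=f(u)$ almost everywhere in $\QT$.

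The main obstacle — really the only step that requires care — is the limit passage in the product, specifically the convergence $\int_{\QT}f(u_n)u_n\to\int_{\QT}\chi u$: this rests on combining strong and weak $\Ltwo$ convergence, via $f(u_n)u_n-\chi u=(f(u_n)-\chi)u_n+\chi(u_n-u)$, bounding the first piece by $\|f(u_n)-\chi\|_{\Ltwo}\,\|u_n\|_{\Ltwo}$ and using weak convergence on the second. The requirement that the competitor functions $w$ be bounded, so that $f(w)$ is square integrable, is where the uniform $\Linf$-bound on $(u_n)$ is used; if one wished to work under the bare hypotheses, the same argument goes through after first truncating $u$ at level $k$ and letting $k\to\infty$. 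Everything else is routine.
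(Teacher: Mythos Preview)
The paper does not supply a proof of this lemma at all; it is merely quoted from \cite{H.V.P2000} in the appendix for the reader's convenience, so there is no ``paper's proof'' to compare against. Your Minty--Browder argument is the standard proof of this type of statement and is correct. The one point worth flagging is the interpretation of the hypothesis $(u_n)\subset\Linf(\QT)$: read literally it does not give a uniform bound, and then the weak limit $u$ need not lie in $\Linf(\QT)$, which your substitution $w=u+\lambda\varphi$ requires. You have already noticed this and observed (correctly) that in the paper's application the uniform $\Linf$ bound is supplied by \cref{th:aprioriLinfboundrho}; your proposed truncation workaround for the general case needs a little more care than you indicate, but it is not needed here.
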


Also for completeness, we recall the technique of discrete integration by parts.
\begin{lemma}[Summation by Parts]\label{th:summationbyparts}
	Let $\prt{a_i}$ and $\prt{b_i}$ be sequences. Then, for $m, n \in \mathbb{N}$, it holds
	\begin{align}
		\sum_{i=m}^{n} a\i \prt{b\ip-b\i} =
		-\sum_{i=m}^{n-1} \prt{a\ip-a\i} b\ip
		+ a_{n}b_{n+1} - a_{m}b_{m}.
	\end{align}
\end{lemma}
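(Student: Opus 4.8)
The plan is to reduce the identity to a telescoping sum via the discrete analogue of the Leibniz product rule. First I would record the elementary per-index identity
\[
	a_i (b_{i+1} - b_i) = (a_{i+1} b_{i+1} - a_i b_i) - (a_{i+1} - a_i) b_{i+1},
\]
valid for every integer $i$, which is verified by simply expanding the right-hand side and cancelling the common term $a_{i+1} b_{i+1}$. This is the only genuine computation in the argument.

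Next I would sum this identity over $i$ from $m$ to $n$. The first bracket on the right telescopes cleanly, giving
\[
	\sum_{i=m}^{n} (a_{i+1} b_{i+1} - a_i b_i) = a_{n+1} b_{n+1} - a_m b_m,
\]
while the second bracket produces $-\sum_{i=m}^{n}(a_{i+1}-a_i) b_{i+1}$. At this stage one already has a summation-by-parts formula, but with the upper summation limit equal to $n$ rather than the asserted $n-1$, and with the boundary term $a_{n+1}b_{n+1}$ in place of $a_n b_{n+1}$.

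The final step is purely bookkeeping: I would peel off the $i=n$ summand of the sum $-\sum_{i=m}^{n}(a_{i+1}-a_i)b_{i+1}$, namely $-(a_{n+1}-a_n)b_{n+1} = -a_{n+1}b_{n+1} + a_n b_{n+1}$, and recombine the free endpoint terms. The spurious $-a_{n+1}b_{n+1}$ cancels against the $+a_{n+1}b_{n+1}$ left over from the telescoping sum, leaving exactly
\[
	-\sum_{i=m}^{n-1}(a_{i+1}-a_i)b_{i+1} + a_n b_{n+1} - a_m b_m,
\]
which is the claimed identity.

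Since the statement is an elementary algebraic rearrangement, there is no real obstacle; the only point requiring care is the correct matching of the endpoint contributions, that is, ensuring the upper limit is shifted from $n$ to $n-1$ and that the boundary terms collapse to precisely $a_n b_{n+1} - a_m b_m$ rather than a term evaluated at $n+1$. An equally valid alternative would be to split the left-hand side directly as $\sum a_i b_{i+1} - \sum a_i b_i$, reindex the first sum via $j = i+1$, and recombine the shifted ranges; I would favour the telescoping route above, as it isolates the boundary terms most transparently.
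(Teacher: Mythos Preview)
Your proof is correct. The paper does not actually supply a proof of this lemma; it merely states the summation-by-parts identity in the appendix as a recalled fact, so there is nothing to compare your approach against.
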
 \bibliographystyle{abbrv}
\bibliography{./BailoCarrilloMurakawaSchmidtchen_ConvergenceEnergyDissipatingScheme.bib}
 
\end{document}